\documentclass[10pt]{article}


\usepackage{amsfonts,a4wide,amsmath,amssymb,amsthm}
\usepackage[utf8]{inputenc}  
\usepackage[T1]{fontenc}
\usepackage[all]{xy}    
\usepackage{mathtools}  
\usepackage{multirow}    

\usepackage{hyperref}      
\hypersetup{
     colorlinks=true, 
     breaklinks=true, 
     urlcolor= black,  
     linkcolor= black, 
     linktoc	=all,	
     citecolor=black,	
     filecolor=black,	
     bookmarksopen=true,            
     pdftitle={\texorpdfstring{Surjectivity of Galois representations associated with quadratic $\mathbb{Q}$-curves}{Surjectivity of Galois representations associated with quadratic Q-curves}}, 
     pdfauthor={Samuel Le Fourn},     
     pdfsubject={Représentations galoisiennes}          
}
\usepackage{tikz}   


\theoremstyle{plain}
\newtheorem{thm}{Theorem}[section]
\newtheorem{prop}[thm]{Proposition}
\newtheorem{lem}[thm]{Lemma}
\newtheorem*{cor}{Corollary}
\newtheorem*{thmsansnom}{Theorem}

\theoremstyle{definition}
\newtheorem{defi}{Definition}[thm]

\newtheorem*{defisansnom}{Definition}

\theoremstyle{remark}
\newtheorem{rem}{Remark}[thm]
\newtheorem*{remetoile}{Remark}


\newcommand{\g}{\gamma}

\newcommand{\s}{\sigma}
\renewcommand{\t}{\tau}

\newcommand{\G}{\Gamma}


\newcommand{\gA}{{\mathfrak{A}}}

\newcommand{\gP}{{\mathfrak{P}}}

\newcommand{\gS}{{\mathfrak{S}}}

\newcommand{\Acal}{{\mathcal A}}
\newcommand{\Bcal}{{\mathcal B}}
\newcommand{\Ccal}{{\mathcal C}}
\newcommand{\Dcal}{{\mathcal D}}
\newcommand{\Ecal}{{\mathcal E}}
\newcommand{\Fcal}{{\mathcal F}}
\newcommand{\Gcal}{{\mathcal G}}
\newcommand{\Hcal}{{\mathcal H}}
\newcommand{\Ical}{{\mathcal I}}

\newcommand{\Ocal}{{\mathcal O}}
\newcommand{\Pcal}{{\mathcal P}}

\newcommand{\Scal}{{\mathcal S}}


\newcommand{\N}{{\mathbb{N}}}
\newcommand{\Z}{{\mathbb{Z}}}
\newcommand{\Q}{{\mathbb{Q}}}
\newcommand{\R}{{\mathbb{R}}}
\newcommand{\C}{{\mathbb{C}}}
\renewcommand{\P}{\mathbb{P}}
\newcommand{\F}{\mathbb{F}}
\newcommand{\Fp}{{\mathbb{F}_{\! p}}}
\newcommand{\Fl}{{\mathbb{F}_{\! \ell}}}

\newcommand{\T}{{\mathbb{T}}}

\newcommand{\Aut}{\operatorname{Aut}}

\renewcommand{\div}{\operatorname{div}}

\newcommand{\End}{\operatorname{End}}
\newcommand{\Gal}{\operatorname{Gal}}
\newcommand{\GL}{\operatorname{GL}}
\newcommand{\SL}{\operatorname{SL}}
\newcommand{\Id}{\operatorname{Id}}

\newcommand{\Spec}{\operatorname{Spec}}

\newcommand{\cl}{\operatorname{cl}}

\newcommand{\PGL}{\operatorname{PGL}}
\newcommand{\PSL}{\operatorname{PSL}}

\newcommand{\num}{\operatorname{num}}

\newcommand{\im}{\operatorname{im}}

\newcommand{\mt}{\mapsto}	
\newcommand{\lmt}{\longmapsto}
\newcommand{\ra}{\rightarrow}

\newcommand{\Llra}{\Longleftrightarrow}

\newcommand{\fonction}[5]{\begin{array}{c|ccl}           
#1: & #2 & \longrightarrow & #3 \\
    & #4 & \longmapsto & #5 \end{array}}

\newcommand{\quot}[2]                                    
{\raisebox{.6ex}{\newline$#1$}\!/\!\raisebox{-.6ex}{$#2$}}



\newcommand{\glp}{\GL_2 ( \F_p )}
\renewcommand{\glp}{\GL_2 ( \Fp )}
\newcommand{\glep}{\GL ( E_p )}

\newcommand{\pglep}{\PGL ( E_p )}

\newcommand{\slz}{\SL_2(\Z)}


\newcommand{\xsp}{X_{\rm{split}} (p)}
\newcommand{\xnsp}{X_{\rm{nonsplit}}(p)}

\newcommand{\xosdp}{X_0 ^s (d ; p)}
\newcommand{\xonsdp}{X_0 ^{ns} (d;p)}
\newcommand{\xospcardp}{X_0 ^{\rm{sC}} (d;p)}

\newcommand{\xspcar}{X_{\rm{sp.Car.}}(p)}

\newcommand{\tildeJp}{\widetilde{J} (p)}

\newcommand{\Kb}{\overline{K}}
\newcommand{\Qb}{\overline{\Q}}
\newcommand{\GalK}{\Gal (\Kb / K)}
\newcommand{\GalQ}{{\Gal(\Qb / \Q)}}

\newcommand{\Fpdeux}{{\mathbb{F}_{p ^2}}}
\newcommand{\Fpb}{{\overline{\Fp}}}

\newcommand{\rep}{\rho_{E,p}}
\newcommand{\Prep}{\P \overline{\rho}_{E,p}}
\newcommand{\Prepprime}{\P \overline{\rho}_{E',p}}


\newcommand{\diagstar}{\begin{pmatrix} \ast & 0 \\ 0 & \ast \end{pmatrix}}

\newcommand{\antidiagstar}{\begin{pmatrix} 0 & \ast \\ \ast & 0 \end{pmatrix}}

\newcommand{\matabcd}{\begin{pmatrix} a & b \\ c & d \end{pmatrix}}


\newcommand{\ok}{{\Ocal_K}}






\title{\Huge Surjectivity of Galois representations associated with quadratic $\Q$-curves}
\author{Samuel Le Fourn \footnote{Email : \url {Samuel.Le.Fourn@math.u-bordeaux1.fr}  }\\
Université de Bordeaux I}
\date\today

\begin{document}
\maketitle
\begin{abstract}
We prove in this paper an uniform surjectivity result for Galois representations associated with non-CM $\Q$-curves over imaginary quadratic fields, using various tools for the proof, such as Mazur's method, isogeny theorems,  Runge's method and analytic estimates of sums of $L$-functions.
\smallskip

\noindent \textbf{Keywords.} Galois representations, $\Q$-curves, isogeny theorems, Serre's uniformity problem, Runge's method.

\smallskip
AMS 2010 Mathematics Subject Classification : 11G05, 11G10, 11G16, 11G18.
\end{abstract}

\section*{Introduction}



For every elliptic curve $E$ defined over a number field $K$ and every prime number $p$, the representation
\[
 \rep : \GalK \ra \glep \cong \glp,
\]
defined by the action of $\GalQ$ on the $p$-torsion $E_p$ of $E$, is a central object in the study of elliptic curves. Serre proved in 1972 \cite{Serre71} that for any elliptic curve $E$ without complex multiplication and defined over a number field $K$, the representation $\rep$ is surjective for large enough $p$, the bound depending on $E$ and $K$.  In fact, we prove here as a side result a totally explicit version of Serre's result (Theorem \ref{Serreexplicite}) which might be of independent interest : in particular, it asserts that for any such elliptic curve $E$, the representation $\rep$ is surjective for  
\[
 p > 10^7 [K : \Q] ^2 \left(  \max \{ h_\Fcal (E), 985 \} + 4 \log [K : \Q] \right) ^2
\]
not dividing the discriminant of $K$, where $h_\Fcal (E)$ is the stable Faltings height of $E$.
What is now known as ``Serre's uniformity problem'' is determining whether this bound can be made independent on $E$.

So far, little is known about this problem for general number fields. Over the field $\Q$, Mazur proved in 1977 \cite{Mazur77} that for any elliptic curve $E$ defined over $\Q$ without complex multiplication, the representation $\rep$ is irreducible when $p>37$.  Recently, Bilu, Parent and Rebolledo proved that for such an $E$, the image of $\rep$ is also not contained in the normaliser of a split Cartan subgroup of $\glp$ for $p \geq 11$, $p \neq 13$.

The present work does not deal with the uniformity problem for elliptic curves defined over $\Q$, but for a slightly different object called $\Q$-curve.

\begin{defisansnom}
Let $K$ be a number field. An elliptic curve $E$ defined over $K$ is called a $\Q$-curve if for every  $\s \in \GalQ$, the elliptic curve $E ^\s$ is isogenous to $E$. Its degree $d(E)$ is then the least common multiple of the minimal degrees of isogenies between $E$ and its conjugates.
\end{defisansnom}

\begin{remetoile}
 The set of $\Q$-curves is stable by isogeny. In particular, every elliptic curve defined over $\Qb$ which is isogenous to an elliptic curve defined over $\Q$ is a $\Q$-curve.
To put aside this special case, we will say $E$ is a \emph{strict $\Q$-curve} if it is not isogenous to an elliptic curve defined over $\Q$.
\end{remetoile}

For every $\Q$-curve $E$ of degree $d$ without complex multiplication and every $p$ not dividing $d$, one defines in subsection \ref{rappelsrepproj} a projective representation
\[
\Prep : \GalQ \ra \pglep
\]
analogous to the representation $\rep$ for $\Q$-curves. One small difference with elliptic curves over $\Q$ is that the Weil pairing does not always guarantee the surjectivity of the determinant anymore (but this obstruction to surjectivity only exists when $p$ is ramified in the field of definition of $E$). Therefore, we say $\Prep$ is quasi-surjective if $\Prep$ contains the projection of $\SL (E_p)$ into $\pglep$, to ignore these questions of image of determinant.

 As for modularity, Ribet proved \cite{Ribet04}, using Serre's conjectures (now proved by Khare, Kisin and Wintenberger), that the $\Q$-curves are exactly the modular elliptic curves, that is the elliptic curves that appear as quotients of a modular curve $X_1 (N)$ for some $N$. The ``modular machinery'' therefore works for these curves too, and gives new applications for ternary diophantine equations. In fact, in a certain number of cases, one can attach to a ternary diophantine equation a Frey elliptic curve, as in the celebrated Fermat case, and this curve often happens to be a $\Q$-curve (see, for instance,  \cite{Ellenberg04} or \cite{DieulefaitFreitas}). This has been one of the motivations for the study of such objects in the recent period.

The main result of the present article is the following :

\begin{thmsansnom}
Let $K$ be an imaginary quadratic field of discriminant $-D_K$. 

For every strict $\Q$-curve $E$ defined over $K$ without complex multiplication of degree $d(E)$,  the representation $\Prep$ is surjective for every prime number $p > \max (50 D_K^{1/4} \log(D_K),2 \cdot 10^{13})$ not dividing $D_K d(E)$.
\end{thmsansnom}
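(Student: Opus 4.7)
The plan is to proceed by the standard case analysis on the image of $\Prep$ inside $\pglep$. Since we only need quasi-surjectivity, it suffices to show that for $p$ in the stated range, $\mathrm{Im}\,\Prep$ is not contained in (i) a Borel subgroup, (ii) the normalizer of a split Cartan subgroup, (iii) the normalizer of a non-split Cartan subgroup, or (iv) an exceptional subgroup whose projective image is $A_4$, $S_4$ or $A_5$. In each case, the $\Q$-curve structure (together with the hypotheses that $E$ is strict and $p\nmid d(E)$) allows one to translate the existence of such an image into a $\Q$-rational point on a suitable twist, defined over $\Q$, of a modular curve classifying $\Q$-curves of degree $d$ equipped with extra $p$-level structure.

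The exceptional case (iv) is dealt with by the explicit Serre Theorem \ref{Serreexplicite}: a projective image among $A_4,S_4,A_5$ forces a small power of Frobenius at some prime of good reduction to act as a scalar, which contradicts the explicit lower bound on $p$ once the stable Faltings height $h_\Fcal(E)$ is bounded polynomially in $D_K$. This polynomial bound on $h_\Fcal(E)$ is obtained from modularity of $\Q$-curves (Ribet) combined with analytic estimates on sums of special values of $L$-functions of the attached newforms, and is comfortably absorbed by the absolute constant $2\cdot 10^{13}$.

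The Borel case (i) is ruled out by isogeny theorems: the $K$-point on the twisted $X_0(dp)$ yields a $p$-isogeny of $E$, which when combined with the degree-$d$ isogenies coming from the $\Q$-curve structure produces a cyclic isogeny incompatible with Mazur- and Momose-type statements over imaginary quadratic fields, once $p$ is large compared to $D_K$ and $d(E)$. The non-split Cartan case (iii) is reduced, after base change to the quadratic extension splitting the Cartan, to a split-type problem over a field whose discriminant is controlled by $D_K$, and hence to case (ii).

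The main obstacle, and the true source of the dominant term $50\,D_K^{1/4}\log(D_K)$, is the split Cartan case (ii). On the relevant twist of $\xospcardp$, I would apply Runge's method by choosing a modular unit $U$ supported on a $\GalK$-stable subset of cusps and of controlled height, so that at all but one archimedean place $v$ of $K$ one has $|U|_v\leq 1$; the product formula then forces a $q$-expansion inequality at the remaining place, producing an upper bound for the height of the point, and hence for $p$. The real difficulty is that $K$ is imaginary quadratic and thus has a \emph{single} archimedean place, which is precisely the borderline regime where Runge's inequality barely applies: one must exploit the full cuspidal structure over $K$ and use sharp analytic estimates on partial sums of Dirichlet $L$-values attached to the quadratic character of $K$ in order to obtain the quarter-power dependence $D_K^{1/4}\log(D_K)$ rather than an exponential bound. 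This analytic optimization of Runge's method is, I expect, where the bulk of the paper's technical work lies.
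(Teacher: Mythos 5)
Your high-level outline (case analysis on the image, Runge's method, isogeny theorems, analytic $L$-function estimates) lists the right ingredients, but it misassigns which ingredient does what, and as a result the argument as described would not close. Concretely, the dominant term $50\,D_K^{1/4}\log(D_K)$ does \emph{not} come from the split Cartan case or from Runge's method: in the paper, the Borel and split Cartan cases are handled by Mazur's method on $X_0(dp)$ and $\xosdp$ (formal immersions via the Eisenstein quotient, together with the component group of $J_0(p)$ in ramified characteristic $p$), and these give bounds that are \emph{absolute}, independent of $D_K$. The $D_K^{1/4}\log(D_K)$ term comes exclusively from the \emph{nonsplit} Cartan case, where one follows Ellenberg's approach: produce a rank-zero quotient of the relevant twisted jacobian via Kolyvagin--Logachev, which requires an eigenform $f\in S_2(\Gamma_0(p^2))^{+,\mathrm{new}}$ with $L(f\otimes\chi_K,1)\neq0$, and the analytic estimates (Akbary/Petersson trace formula, Weil bounds on Kloosterman sums, a P\'olya--Vinogradov-type bound) are used to guarantee nonvanishing of the weighted sum $(a_1,L_\chi)^{+,\mathrm{new}}_{p^2}$ for $p>50\,D_K^{1/4}\log(D_K)$. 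Your proposed ``base change to split the Cartan'' reduction of (iii) to (ii) is not what happens and would not reproduce this dependence.

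The second structural error concerns how Runge's method and the isogeny theorem interact. Runge's method is applied to $X_0(d_0)(\Ocal_K)$, with $d_0$ the smallest prime factor of $d(E)$ (not to a twist of $\xospcardp$), and its role is only to give the one-sided \emph{upper} bound $\log|j(E)|\leq 2\pi\sqrt{d_0}+6\log(d_0)+8$ once $j(E)\in\Ocal_K$ is known. The matching \emph{lower} bound on $\log|j(E)|$ (equivalently on $h_\Fcal(E)$) in terms of $p$ and $d(E)$ comes from the Gaudron--R\'emond isogeny theorem applied to $B=(E\times E)/G$. It is the confrontation of these two bounds — Runge's $O(\sqrt{d})$ upper bound against the isogeny theorem's $\sqrt{dp}$ (Borel) or $\sqrt{d}\,p$ (Cartan) lower bound — that kills $p$, and this confrontation is only possible \emph{after} one has shown $j(E)\in\Ocal_K$ via Mazur's method or via the $L$-function nonvanishing. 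Your proposal does not contain the step ``not quasi-surjective $\Rightarrow$ potentially good reduction everywhere,'' which is exactly the pivot of the whole proof and the reason Mazur's method and the $L$-function estimates appear. Finally, the exceptional case is essentially free: Proposition \ref{exceptionalcase}, a direct consequence of Serre's analysis of tame inertia, rules it out for $p>30[K:\Q]+1=61$ without any appeal to Faltings heights, modularity, or $L$-functions.
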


We actually give a more precise statement of this theorem in section \ref{theoremesdisogenie}.

Let us insist that this theorem does not add to our knowledge of elliptic curves over $\Q$ (the reader will notice the hypothesis ``strict $\Q$-curve''). In particular, the issues arising for the nonsplit Cartan case over $\Q$ still hold, even if this case is solved in our context.

The theorem is entirely explicit in hope this might apply to diophantine equations, using Frey curves, but but our motivation was to give what seems to be the first instance of a surjectivity theorem for families of elliptic curves. It can even be interpreted as a ``uniform big Galois image'' result for families of abelian surfaces, taking Weil restriction from $K$ to $\Q$ for the $\Q$-curves over quadratic imaginary fields.

The proof mechanism improves on \cite{Ellenberg04} and is akin to that of \cite{BiluParent11}. More precisely, let $K$ be an \emph{imaginary quadratic field}. For the rest of the introduction, $E$ refers to a strict $\Q$-curve without complex multiplication, defined over $K$. Here is the structure of the proof :

{$(\emph{\textbf{0}})$} Use classical knowledge on $\Q$-curves to reduce the problem to a question about rational points on modular curves in subsection \ref{rappelsrepproj}. 

 {$(\emph{\textbf{I}})$} Prove that for large enough $p$ (not depending on $E$), if $\Prep$ is not quasi-surjective, $E$ has potentially good reduction at every prime of $\ok$ (that is, $j(E) \in \ok$). This follows from the classical Mazur's method, designed in \cite{Mazur77}. From Proposition \ref{quasisurjprojrappel} below, this part of the proof splits as follows :
 \begin{equation}
 \label{decoupageI}
 (I) = (I)_{B} + (I)_{SC} + (I)_{NSC} + (I)_{Exc}
 \end{equation}
where $(I)_\ast$ means that for large enough $p$, if $\Prep$ is contained in a maximal subgroup of $\pglep$ of the shape $(\ast)$, then $j(E) \in \ok$, with the four possible types of maximal subgroups of $\pglep$ are denoted by $B$ for Borel subgroup, $SC$ (resp. $NSC$) for the normaliser of a split (resp. nonsplit) Cartan subgroup and $Exc$ for an exceptional subgroup.
   
This step improves Theorem 3.14 of \cite{Ellenberg04}, as with the same hypothesis, we obtain potentially good reduction everywhere for $E$, even for primes above 2 and 3. This improvement is crucial for Runge's method in part $(II)$ below. The four cases are of various difficulty :

$(I)_{Exc}$ does not use Mazur's method, as $\Prep$ is not in the exceptional case for large enough $p$ (see subsection \ref{rappelsrepproj}, using results of \cite{Serre71} on the action of tame inertia).
 
$(I)_B$ et $(I)_{SC}$ will respectively be proved in subsections \ref{Borel case} and \ref{Split Cartan case} with bounds independent of $K$, and even for elliptic curves defined over $\Q$ and $\Q$-curves over $K$ real quadratic. The Borel and split Cartan case use results from subsections \ref{FormimmMazur} and \ref{Eisquotcompongroup}. Our results improve (as explained above) Propositions 3.2 and 3.4 of \cite{Ellenberg04}.

$(I)_{NSC}$ will be stated in subsection \ref{Nonsplit Cartan case} and proved in the Appendix, with a bound depending on $D_K$ and only for strict $\Q$-curves : we only improve quantitatively Proposition 3.6 of \cite{Ellenberg04} here by using different estimates for weighted sums of $L$-functions.

$(\emph{{\textbf{II}}})$ Use Runge's method in section \ref{Runge} to get an \emph{upper bound} of the shape
\[
\log |j(E)|  \leq C \sqrt{d(E)}
\]
for every $\Q$-curve $E$ of degree $d(E)$ defined over an imaginary quadratic field and with an integral $j$-invariant, where $C$ is an absolute explicit constant.

$(\emph{{\textbf{III}}})$ 
From Gaudron-Rémond's version of the period theorem (\cite{GaudronRemond}, Theorem 1.2), and the associated isogeny theorems (giving explicit and sharper versions of Masser-Wüstholz theorem \cite{MasserWustholz93} and Pellarin's theorem \cite{Pellarin01}), we obtain in section \ref{theoremesdisogenie} a new explicit version of Serre's surjectivity theorem, which in turns gives us a \emph{lower bound} of the shape
\[
\log |j(E)| \geq C' \sqrt{d(E) p }
\]
(and even better for Cartan cases) for every $\Q$-curve $E$ of degree $d(E)$, without complex multiplication, defined over a quadratic field, whose $j$-invariant is integral and such that $\Prep$ is not quasi-surjective. Here again, the constant $C'$ is absolute and explicit.

$(\emph{{\textbf{IV}}})$
Finally, gather the previous results to obtain, for any quadratic imaginary field $K$, an explicit bound $p_K$ such that for any prime number $p > p_K$ and any strict $\Q$-curve $E$ of degree $d(E)$ (prime to $p$) over $K$ whose representation $\Prep$ is not quasi-surjective, $j(E) \in \ok$ and 
\[
C' \sqrt{d(E) p } \leq \log |j(E)| \leq C \sqrt{d(E)}
\]
which is impossible for large enough $p$ (independant of $d(E)$).
Therefore, there is a bound $M_K$ such that $\Prep$ is surjective for all $p > M_K$ not dividing $D_K d(E)$, and computing this bound gives us the main the theorem.

The problem of surjectivity for quadratic $\Q$-curves can be asked for $\Q$-curves on larger fields. We expect at least the $(I)_B$, $(I)_{SC}$ and $(I)_{Exc}$ parts to be feasible in the same way for any $\Q$-curve, giving bounds for potentially good reduction depending only on the degree of its field of definition. Moreover, Runge's method in part $(\textbf{$II$})$ demands that for a $\Q$-curve of degree $d$ defined on $K$ with an integral $j$-invariant, there are more cusps on $X_0 (d)$ than there are infinite places on $K$ (hence the ``imaginary quadratic field'' hypothesis), so it might be adaptable to polyquadratic fields that are not totally real.
Finally, part $(\textbf{$III$})$ is very general and gives the same type of bounds, with constants $C$ and $C'$ depending only on the degree of the field of definition of the $\Q$-curve. Consequently, there is some hope for similar results for $\Q$-curves over larger fields, the thorniest issue remaining the nonsplit Cartan case.

\section*{Acknowledgements}
I would like to thank my advisor, Pierre Parent, for his guidance to help me approach the topic, and patience throughout the many readings and corrections of this article. I also am very grateful to Gaël Rémond for his precious help and thorough checks and corrections, especially for the isogeny theorem presented here and its consequences.

\setcounter{tocdepth}{2}
\begin{tableofcontents}
\end{tableofcontents}

\section*{Notations}
In this article, unless stated otherwise, we denote by

{
\begin{tabular}{ll}
$p$ & a prime number larger than $5$. \\
$E$ &  an elliptic curve defined over $\Qb$. \\
$E^\s$ & the Galois conjugate of $E$ by $\s \in \GalQ$.\\
$E_n$ or $E[n]$ & the $n$-torsion of $E$, non-canonically isomorphic to $(\Z/ n \Z) ^2$.\\
$\T$ & the Hecke algebra for $\G_0 (p)$, generated over $\Z$ by the usual Hecke operators $T_n, n \in \N$. \\
$(a,b)$ & the greatest common divisor of the integers $a$ and $b$.
\end{tabular}
}

\noindent For every scheme $X=X_\Z$ over $\Spec \Z$, we denote by :

$X_\Q$ the generic fiber of $X$, considered as a variety over $\Q$.

$X_R : = X \otimes_\Z R$ the extension of scalars from $\Z$ to any ring $R$.

$X_\Fpb : = X \otimes_\Z \Fpb$ the geometric extension of the special fiber of $X$ at $p$.

$\widetilde{X}$ the regular minimal model of $X$ on $\Spec \Z$.

$\widetilde{X_R}$ the regular minimal model of $X$ on $R$ (generally different from $(\widetilde{X})_R$) if $R$ is a Dedekind ring of characteristic 0.
\\

 \noindent For every abelian variety $J=J_\Q$ defined over $\Q$, we denote by :

$J_\Z$ the Néron model of $J$ over $\Z$.

$J_R$ the Néron model of $J$ over any discrete valuation ring $R$ of characteristic 0.

$J_\Fpb : = J_\Z \otimes_\Z \Fpb$ the geometric extension of the fiber of $J_\Z$ at $p$.

$J(\Q)_{\rm{tors}}$ the finite group of rational torsion points of $J$.

\section{\texorpdfstring{Setup of the surjectivity problem and tools for Mazur's method}{Setup of the surjectivity problem and tools for Mazur's method}}
\subsection{\texorpdfstring{$\Q$-curves and moduli spaces associated to the surjectivity problem}{Q-curves and moduli spaces associated to the surjectivity problem}}
\label{rappelsrepproj}

We assume throughout this article that every considered $\Q$-curve is without complex multiplication (which is the natural hypothesis for Serre's surjectivity problem).

\begin{defi}Let $K$ be a number field.
 Let $E$ be a $\Q$-curve \emph{without complex multiplication} defined over $K$.
For every prime number $p$ not dividing $d(E)$, the map
 \[
 \fonction{\Prep}{\GalQ}{\PGL(E_p)}{\s}{\left(D \lmt D_\s := \mu_\s ( D ^\s)\right)}
 \]
for every $\Fp$-line $D$ of $E_p$, is a projective representation of $\GalQ$ in $\P E_p$, which does not depend on the choice of the isogenies $\mu_\s : E^\s \ra E$ of degree prime to $p$. For a fixed embedding $K \subset \Qb$, the restriction of $\Prep$ to $\Gal ( \Qb / K)$ is the projectivization of the natural representation $\rep : \Gal ( \overline{\Q}/K) \ra \GL(E_p)$ on $p$-torsion points of $E$.
\end{defi}

\label{rappelprobsurj}

To put aside the problem of surjectivity of determinant (entirely described by the degrees of the isogenies $\mu_\s : E^\s \ra E$, as the reader can check using Weil pairing), we make the following definition.
\begin{defi}
Let $E$ be a $\Q$-curve and $p$ a prime number not dividing $d(E)$. 
The representation $\Prep$ is quasi-surjective if its image contains $\PSL (E_p)$ (it is then $\PSL (E_p)$ or $\PGL (E_p)$).
\end{defi}

The following proposition (see \cite{Serre71}, $\mathsection$ 2.4 to 2.6) is a consequence of Dickson's theorem on maximal subgroups of $\PGL_2 ( \Fp)$ and justifies the equality \eqref{decoupageI} in the introduction.

\begin{prop}
\label{quasisurjprojrappel}
Let $p$ be a prime number and $K$ be a number field.
Let $E$ be a $\Q$-curve without complex multiplication, defined over $K$ and of degree prime to $p$.
If $\Prep$ is not quasi-surjective, its image is included in one of the four following types of groups : 

$\bullet$ A Borel subgroup of $\pglep$, which means $\Prep$ leaves invariant an $\Fp$-line $C_p$ (Borel case).

 $\bullet$ The normaliser of a split Cartan subgroup of $\pglep$, which means $\Prep$ leaves globally stable a pair $\{A_p,B_p\}$ of distinct $\Fp$-lines (split Cartan case).
 
 $\bullet$ The normaliser of a nonsplit Cartan subgroup of $\pglep$, which means $\Prep$ is included in the normaliser of a copy of $\Fpdeux^*$ in $\GL(E_p)$.
 
 $\bullet$ An exceptional subgroup of $\pglep$,that is, a copy of $\gA_4$, $\gA_5$ or $\gS_4$ in $\pglep$ (exceptional case).
 \end{prop}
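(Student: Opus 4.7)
The statement is essentially Dickson's classification of subgroups of $\PGL_2(\F_p)$ applied to the image of $\Prep$, so the plan is short and mostly bookkeeping. My first step will be to check that $\Prep$ is a well-defined projective representation of $\GalQ$: for every $\sigma \in \GalQ$, the $\Q$-curve hypothesis furnishes an isogeny $\mu_\sigma : E^\sigma \to E$ whose degree can be chosen prime to $p$. Since $E$ has no complex multiplication, $\End_{\Qb}(E) = \Z$, so any two such isogenies differ by multiplication by a rational scalar whose $p$-adic valuation is $0$, hence becomes an element of $\F_p^*$ after reduction. The induced map on $p$-torsion $E^\sigma_p \to E_p$ is therefore well-defined modulo $\F_p^*$, and the cocycle $\sigma \mapsto [\mu_\sigma]$ yields a homomorphism $\Prep : \GalQ \to \PGL(E_p)$ extending the projectivization of the natural $p$-torsion representation of $\Gal(\Qb/K)$.

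Next I will set $G = \im(\Prep) \subset \PGL(E_p)$ and observe that, when $\Prep$ fails to be quasi-surjective, $G$ does not contain $\PSL(E_p)$ by definition; hence $G$ is contained in a maximal subgroup of $\PGL(E_p)$ that does not contain $\PSL(E_p)$. The heart of the argument will then be Dickson's theorem (see for example \cite{Serre71}, \S 2.5), which for $p \geq 5$ classifies these maximal subgroups, up to conjugacy, as exactly four families: Borel subgroups, normalizers of split Cartan subgroups, normalizers of non-split Cartan subgroups, and the exceptional groups isomorphic to $\gA_4$, $\gS_4$ or $\gA_5$.

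Finally I will translate each group-theoretic description into the geometric statement of the proposition: a Borel subgroup is the stabilizer of an $\F_p$-line in $E_p$; the normalizer of a split Cartan preserves an unordered pair of distinct $\F_p$-lines; the normalizer of a non-split Cartan is by definition the normalizer of a copy of $\F_{p^2}^*$ in $\GL(E_p)$; and the exceptional cases are named as such. There is no serious obstacle here: the proposition is purely group-theoretic once $\Prep$ is in place, and the only subtlety worth emphasising is that the threshold $\PSL(E_p)$ in the definition of quasi-surjectivity, rather than all of $\PGL(E_p)$, is precisely what forces the image of a non-quasi-surjective representation to land inside one of the four maximal subgroups listed above.
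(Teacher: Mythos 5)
Your argument is correct and mirrors the paper exactly: the paper offers no proof beyond citing Serre \cite{Serre71}, $\mathsection$2.4--2.6, so your unwinding of Dickson's classification together with the remark on well-definedness of $\Prep$ is essentially the paper's own implicit reasoning. One small caution: since $\PSL(E_p)$ is itself a maximal subgroup of $\PGL(E_p)$, a non-quasi-surjective image landing inside $\PSL(E_p)$ is not immediately ``contained in a maximal subgroup of $\PGL(E_p)$ not containing $\PSL(E_p)$'' --- one must descend a further level to a maximal subgroup of $\PSL(E_p)$, which is exactly what Serre's $\mathsection$2.5--2.6 (classifying \emph{all} subgroups of $\PGL_2(\Fp)$ not containing $\PSL_2(\Fp)$, rather than only the maximal ones) takes care of.
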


The exceptional case is immediately solved, using well-known results of \cite{Serre71} on the action of tame inertia of $E_p$.

\begin{prop}
 \label{exceptionalcase}
Let $K$ be a number field and $E$ an elliptic curve over $K$. For every prime number $p > 30 [K : \Q] + 1$, the image of $\P \rep : \GalK \ra \pglep$ is not contained in an exceptional subgroup $\Acal_4$, $\Acal_5$ or $\Scal_4$.
\end{prop}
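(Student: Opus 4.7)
Each of the groups $\Acal_4$, $\Scal_4$, $\Acal_5$ has the property that all its elements have order at most $5$. Consequently, to show that $\Im(\Prep)$ is not contained in such a subgroup, it suffices to produce a single element $\sigma \in \GalK$ whose image $\Prep(\sigma)$ in $\pglep$ has order strictly greater than $5$; the natural candidate will be a lift of a generator of the tame inertia at a prime of $\ok$ above $p$.

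Fix a prime $\gp$ of $\ok$ with $\gp \mid p$, write $e := e(\gp/p) \leq [K:\Q]$ for its ramification index, and let $I_\gp \subset \GalK$ denote the corresponding inertia subgroup. Recall (Serre, \cite{Serre71}, \textsection 1.11--1.12) that for $p \geq 5$ the elliptic curve $E$ acquires good or multiplicative reduction over a totally ramified extension $L / K_\gp$ of degree at most $6$, and the restriction $\rep|_{I_L^{\mathrm{t}}}$ is then described explicitly by fundamental characters of level $1$ (ordinary or multiplicative case) or of level $2$ (supersingular case). Taking the quotient of the two diagonal characters in the first case or of a level-$2$ fundamental character by its Frobenius conjugate in the second, one obtains inside $\Prep(I_\gp)$ an element of order at least $(p-1)/(6e)$ or $(p+1)/(6e)$ respectively.

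Since $e \leq [K:\Q]$, the hypothesis $p > 30[K:\Q]+1$ forces this order to exceed $5$, producing the required $\sigma$ and finishing the argument. The only point of some delicacy in turning this sketch into a complete proof is the bookkeeping of the ramification contributions coming from (i) the extension needed to acquire semistable reduction and (ii) the passage from $\rep(I_\gp)$ to its image in $\pglep$; the precise numerical constant $30[K:\Q]+1$ is obtained by a direct such count once these indices are tracked.
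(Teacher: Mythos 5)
Your proposal is correct and is essentially the argument that the paper alludes to without spelling out: the text immediately before Proposition~\ref{exceptionalcase} says the exceptional case is ``immediately solved, using well-known results of \cite{Serre71} on the action of tame inertia,'' and your proof supplies exactly those details. The observation that every element of $\Acal_4$, $\Scal_4$, $\Acal_5$ has order at most $5$, combined with the fact that tame inertia at a prime above $p$ gives a cyclic subgroup of $\pglep$ of order at least $(p-1)/(6e)$ (resp.\ $(p+1)/(6e)$) with $e \leq [K:\Q]$, reproduces the stated constant $30[K:\Q]+1$ exactly.
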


To any $\Q$-curve $E$ for which $\Prep$ is not quasi-surjective, we can associate a point on a moduli scheme, as we will now explain. In characteristic zero, giving an isogeny on $E$ amounts to giving its kernel (up to isomorphism), hence we will not make (unless necessary) the difference between an isogeny and its kernel in the following.

$\bullet$ The scheme $X_0 (N)$ is, for any integer $N \geq 1$, the compactified coarse moduli space over $\Z$ parametrising the isomorphism classes of couples $(E,C_N)$ with $E$ an elliptic curve and $C_N$ a cyclic isogeny of degree $N$ of $E$. Its generic fiber $X_0 (N)_\Q$ is the modular curve corresponding to the congruence subgroup 
$\G_0(N)$.

$\bullet$ The scheme $X_0 ^* (N)$ is, for any integer $N \geq 1$, the quotient of  $X_0 (N)$ by its whole group of Atkin-Lehner involutions $\{w_d; d | N, (d,N/d)=1 \}$. A noncuspidal point of $X_0 ^* (N) (\Q)$ is a set of isogenous elliptic curves stable by $\GalQ$, hence the set of conjugates of one or more isogenous $\Q$-curves. This justifies the following definition borrowed to \cite{Elkies04}.

\begin{defi}
 Let $d$ be a squarefree positive integer. We call \emph{central $\Q$-curve of degree $d$} every $\Q$-curve of degree $d$ obtained from a point of $X_0 ^* (d) (\Q)$.
\end{defi}

The next proposition (reformulated from the Theorem of $\cite{Elkies04}$  with elements of its proof) allows us to see $X_0 ^* (N)$ as a sort of moduli space for $\Q$-curves of degree $N$.
\begin{prop}
\label{propElkies}
 For every $\Q$-curve $E$ without complex multiplication defined over $K$, there exists an isogeny $E \ra E'$ of degree dividing $d(E)$ towards a central $\Q$-curve $E'$ defined over $K$ and of squarefree degree $d | d(E)$.
\end{prop}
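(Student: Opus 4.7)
The proof follows Elkies' strategy from \cite{Elkies04}, adapted here to also track the degree of the connecting isogeny. The goal is to find a representative of the $\GalQ$-stable isogeny class of $E$ whose $\Q$-curve degree is squarefree, while keeping control on the degree of the isogeny $E \to E'$. I would first normalize the minimal isogenies $\mu_\sigma : E^\sigma \to E$ so they are all cyclic: any non-cyclic $\mu_\sigma$ factors through multiplication by some integer $m > 1$, yielding a cyclic isogeny of strictly smaller degree and contradicting minimality. The degrees $d_\sigma$ of these minimal cyclic isogenies then have least common multiple equal to $d(E)$ by definition of $d(E)$.

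Next comes a prime-by-prime reduction. Fix $\ell \mid d(E)$. The $\ell$-isogeny subgraph of the isogeny class of $E$, being non-CM, has a restricted combinatorial shape (an $\ell$-volcano) and carries a compatible action of $\GalQ$ on its vertices. The plan is to produce a Galois-equivariant $\ell$-power isogeny $\phi_\ell : E \to E^{(\ell)}$ (a descent inside the volcano) such that in the modified isogeny class the $\ell$-adic part of each new minimal degree $d_\sigma^{(\ell)}$ is at most $\ell$, and such that $\deg \phi_\ell$ divides the $\ell$-part of $d(E)$. The key input is that the ``canonical level'' (crater or floor) of the volcano is $\GalQ$-stable, which ensures that $E^{(\ell)}$ is still defined over $K$. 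Composing these $\phi_\ell$ for all primes $\ell \mid d(E)$, which have pairwise coprime degrees, yields $\phi : E \to E'$ with $\deg \phi$ dividing $d(E)$ and $d(E')$ squarefree and dividing $d(E)$.

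To conclude, for a $\Q$-curve of squarefree degree $d(E')$, Elkies' analysis shows that the $\GalQ$-orbit of the data $(E', \ker \mu'_\sigma)$ on $X_0(d(E'))$ lies entirely in a single Atkin-Lehner orbit, since any isogeny $\mu'_\sigma$ of squarefree degree decomposes canonically as a product of $\ell$-isogenies whose effect matches the involution $w_\ell$ up to isomorphism. Hence this orbit descends to a single $\Q$-rational point of $X_0^*(d(E'))$, and $E'$ is a central $\Q$-curve of squarefree degree $d(E') \mid d(E)$, as claimed.

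The main obstacle is establishing the Galois-equivariance of each descending map $\phi_\ell$, so that $E'$ is defined over $K$ and not merely over $\Qb$. This in turn reduces to showing that for a non-CM $\Q$-curve the target vertex of the descent inside the $\ell$-volcano is canonical, and therefore $\GalQ$-stable. One can either quote this directly from \cite{Elkies04}, or verify it by hand using the rigidity of the non-CM $\ell$-isogeny structure, which severely limits the possible shapes of the volcano.
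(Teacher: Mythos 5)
The paper itself does not prove this proposition; it is stated as a direct reformulation of the main theorem of \cite{Elkies04}, so there is no internal proof to compare against. Your sketch aims at Elkies' argument, but it contains a genuine structural mistake. For a non-CM elliptic curve over $\Qb$ the graph of $\ell$-power isogenies is an \emph{infinite $(\ell+1)$-regular tree}, not an $\ell$-volcano. A volcano, with its crater cycle and stratified floors, is the $\ell$-isogeny structure of an ordinary curve over a finite field (or of a CM curve), the crater recording the maximal CM order. In the non-CM, characteristic-zero setting there is no crater, no floor, no preferred level, and no intrinsic distinguished vertex: the homogeneity forced by $\End(E)=\Z$ is exactly what makes the graph a regular tree. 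So the step you yourself flag as the ``main obstacle'' --- $\GalQ$-stability of the ``canonical level (crater or floor)'' --- rests on a structure that does not exist here.

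The canonicity Elkies actually uses is of a different nature. The finitely many conjugates $E^\sigma$ (more precisely, the $\ell$-primary quotients $E/(\ker\mu_\sigma)[\ell^\infty]$) span a finite subtree of the infinite tree, and a finite tree has a unique \emph{center}, i.e.\ the vertex or edge midpoint minimizing the maximal distance to the vertices of that subtree. This center is intrinsic to the $\GalQ$-orbit of $E$ inside the tree, hence $\GalQ$-equivariant; descending from $E$ to the center vertex (or to an endpoint of the central edge) yields a curve defined over $K$ with $\ell$-part of the new degree equal to $\ell^0$ or $\ell^1$, while the descent isogeny has degree at most $\ell^{v_\ell(d(E))}$ since $E$ is among the marked vertices and so lies within the radius of the center. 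If you replace the volcano picture by this center-of-a-finite-subtree argument, the rest of your sketch --- cyclicity normalization, prime-by-prime composition, and the Atkin-Lehner descent to a point of $X_0^*(d)(\Q)$ --- proceeds along the lines you describe.
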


If $E$ and $E'$ are two $\Q$-curves isogenous by an isogeny of degree $m$, 
we readily see that $\Prep$ and $\Prepprime$ are isomorphic for any prime $p$ not dividing $m$. Therefore, with help of Proposition \ref{propElkies}, the problem of quasi-surjectivity of $\Prep$ for any $\Q$-curve $E$ without complex multiplication of degree $d$ prime to $p$ boils down to the same problem for central $\Q$-curves of degree $d$ and $p$ not dividing $d$. Hence, \emph{from now on and until the end of this article, we assume that every considered $\Q$-curve is a central $\Q$-curve of squarefree degree $d$ without complex multiplication}.

Let us finish with the useful moduli spaces for our problem.

$\bullet$ The scheme $\xsp$ is, for any prime $p$, the compactified coarse moduli scheme over $\Z$ parametrising the isomorphism classes of couples $(E,\{A_p,B_p\})$ with $E$ an elliptic curve and $A_p, B_p$ nonisomorphic isogenies of degree $p$ of $E$.
Its generic fiber $\xsp_\Q$ is the modular curve corresponding to the congruence subgroup 
\[
 \G_{\rm{split}} (p) : = \left\{ \g \in \slz, \g \equiv \diagstar {\rm{ or }} \antidiagstar  \! \!\mod p \right\}.
\]

$\bullet$ The scheme $\xnsp$ is, for any prime $p$, the compactified coarse moduli scheme over $\Z$ parametrising the isomorphism classes of couples $(E,\alpha_p)$
with $E$ an elliptic curve and $\alpha_p$ a copy of $\Fpdeux$ in the endomorphism ring of the group scheme $E_p$. Its generic fiber $\xnsp_\Q$ is the modular curve corresponding to one/any congruence subgroup  $\G_{\rm{nonsplit}} (p)$ which is the pullback of the normaliser of a nonsplit Cartan subgroup of $\GL_2 ( \Fp)$ in $\slz$.

If now $p$ is a prime number and $d$ a squarefree integer prime to $p$, we define (as in \cite{Ellenberg04}) the schemes
\[
 \xosdp  :=  X_0 (d) \times_{X(1)} \xsp,   \qquad
 \xonsdp  :=  X_0 (d) \times_{X(1)} \xnsp.
\]

These two schemes and $X_0(dp)$ are endowed with an involution $w_d$ which becomes the Fricke involution on $X_0(d)$ by the forgetful functors towards $X_0 (d)$. Functorially :
\[
\begin{array}{cccc}
w_d(E,C_d,C_p) & = &  (E/C_d,E_d/C_d,C_p/C_d) & \textrm{on } \quad X_0(dp),\\
w_d(E,C_d,\{A_p,B_p\}) & = & (E/C_d,E_d/C_d,\{A_p/C_d,B_p/C_d\}) & \textrm{on} \quad \xosdp,\\
w_d(E,C_d,\alpha) & = &  (E/C_d,E_d/C_d,\alpha / C_d) & \textrm{on} \quad \xonsdp,
\end{array}
\]
where, for a structure $H$ on $E_p$, $H/C_d$ means the transport of $H$ on $E/C_d$ by the isogeny $E \ra E / C_d$ (for $C_p$, it is $(C_p + C_d)/C_d$ in $E/C_d$ for example). 

Assume now that $K$ is a quadratic field with automorphism $\s$.
Let $E$ be a $\Q$-curve of degree $d$ defined over $K$ such that $\Prep$ is not surjective. The kernel of a minimal isogeny $E \ra E^\s$ is called $C_d$. 
In the Borel case, $\Prep$ stabilises some subgroup $C_p$ of order $p$ of $E_p$, therefore $P=(E,C_d,C_p)$ is a $K$-rational point on $X_0 (dp)$ such that 
\[
 P ^\s = w_d \cdot P
\]
(because we assumed $E$ is a central $\Q$-curve). Similarly, in the split Cartan case (resp. nonsplit Cartan case), we associate to $E$ a point $P$ on $\xosdp(K)$ (resp. $\xonsdp(K)$) such that $ P ^\s = w_d \cdot P$. For more details on this, see (\cite{Ellenberg04}, Proposition 2.2 and above).

\subsection{Formal immersions and Mazur's method}
\label{FormimmMazur}
Let us now recall a key proposition for Mazur's method here.

\begin{prop}
 \label{superpropimmformelle}
Let $K$ be a number field and $\lambda$ be a nonzero prime ideal of $\ok$ above $\ell$. We call $\Ocal_\lambda$ the localised ring of $\ok$ at $\lambda$ and $\F_\lambda = \ok / \lambda$.

Let $X$ be an algebraic curve defined over $\Q$ with a proper model $X_\Z$ on $\Spec \Z$, $A$ an abelian variety defined on $\Q$ with Néron model $A_\Z$ on $\Z$, and $f : X \ra A$ a morphism defined over $\Q$. It naturally extends to a morphism $f_\Z : X_\Z ^{\rm{smooth}} \ra A_\Z$ by the universal mapping property of Néron models. Now, suppose there are two points $x$ and $y$ of $X (K)$ such that : 

$\bullet$ The points $x$ and $y$ have the same reduction modulo $\lambda$, and it belongs to $X_\Z ^{\rm{smooth}}$.

$\bullet$ The morphism $f_\Z$ is a formal immersion at $x_\lambda = y_\lambda$.

$\bullet$ The point $f(y) - f(x)$ is $\Q$-rational and torsion in $A(\Q)$.

Then, if $\ell>2$, $x=y$. If $\ell=2$, either $x=y$ or $f(y) - f(x)$ is a 2-torsion point in $A(\Q)$ generating a copy of the finite group scheme $\mu_2$ in $A_\Z$.
\end{prop}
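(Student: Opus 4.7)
The natural framework is Mazur's formal immersion argument. The plan is to reduce the equality $x = y$ to the vanishing of the torsion point $P := f(y) - f(x)$ inside the formal group of $A$ at the origin above $\ell$, where the torsion can be killed using the formal logarithm.

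\emph{Step 1 (integral extension).} Since $X_\Z$ is proper over $\Spec \Z$ and the common reduction $Q := x_\lambda = y_\lambda$ lies in the smooth locus, the valuative criterion extends $x$ and $y$ uniquely to $\okl$-sections $\tilde x, \tilde y : \Spec \okl \ra X_\Z^{\rm smooth}$. Composing with $f_\Z$ produces two $\okl$-points of $A_\Z$. The difference $P$ is $\Q$-rational torsion, hence extends by the Néron mapping property to a global section $\Spec \Z \ra A_\Z$. As $\tilde x$ and $\tilde y$ coincide at $\lambda$, so do their images under $f_\Z$, and $P$ reduces to the identity in $A_\Z \otimes \F_\ell$; in particular it lies in the identity component.

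\emph{Step 2 (exploiting the formal immersion).} Translate by $-f(x)$: replace $f$ by $g := \tau_{-f(x)} \circ f_{\okl}$, still a formal immersion at $Q$ since translations are automorphisms of $A_{\okl}$. Now $g \circ \tilde x = 0$ and $g \circ \tilde y = P$, both factoring through the formal completion of $A_{\okl}$ at the origin. The formal immersion property says that $\hat{\Ocal}_{A_{\okl}, 0} \twoheadrightarrow \hat{\Ocal}_{X_{\okl}, Q}$ is surjective; consequently the ring maps $\tilde x^{\ast}, \tilde y^{\ast} : \hat{\Ocal}_{X_{\okl}, Q} \ra \okl$ coincide if and only if they agree after pullback to $\hat{\Ocal}_{A_{\okl}, 0}$, which is precisely to say $g \circ \tilde x = g \circ \tilde y$. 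Hence $\tilde x = \tilde y$ (and therefore $x = y$) is equivalent to $P = 0$ in the formal group.

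\emph{Step 3 (formal group torsion).} Rationality of $P$ together with its vanishing modulo $\ell$ forces $P \in \hat A^0(\ell \Z_\ell)$, the formal group of $A_\Z$ at the identity section over $\Z_\ell$. The classical formal logarithm shows that $\hat A^0(\ell^n \Z_\ell)$ is torsion-free as soon as $n > 1 / (\ell - 1)$. For $\ell \geq 3$ this holds with $n = 1$, so $P = 0$ and $x = y$, as wanted. For $\ell = 2$ only $\hat A^0(4 \Z_2)$ is a priori torsion-free, but $2P$ lies there and is torsion, hence zero; so $P$ is $2$-torsion in $A(\Q)$. If moreover $P \neq 0$, it generates a closed subgroup scheme $G \subset A_\Z$ of order $2$ whose $\F_2$-fiber has only the identity as $\F_2$-point (as $P$ reduces to $0$), which by the Oort--Tate classification of order-$2$ finite flat group schemes over $\Z$ forces $G \simeq \mu_2$ and not $\Z / 2 \Z$. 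This yields the stated dichotomy; the whole argument is formal except for the torsion-in-the-formal-group input of Step 3, which is the essential technical ingredient.
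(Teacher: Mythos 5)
Your proof is correct and follows essentially the same route as the paper's: the formal-immersion criterion reduces $x=y$ to the vanishing of $f(y)-f(x)$, which lies in the kernel of reduction at $\ell$, where one kills torsion via the formal group. The only difference is that you unpack the two black boxes the paper cites — Raynaud's specialization lemma (\cite{Mazur78}, Prop.~1.1) for $\ell>2$, and Mazur's Proposition 4.6 for $\ell=2$ — by reproving the torsion-freeness of $\hat{A}(\ell^n\Z_\ell)$ for $n>1/(\ell-1)$ with the formal logarithm and invoking the Oort--Tate dichotomy $\{\Z/2\Z,\mu_2\}$ for order-2 finite flat group schemes over $\Z$, so your version is more self-contained but is the same argument in substance.
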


\begin{proof}
Let us suppose first that $\ell >2$.
 By hypothesis, $z= f(y) - f(x) \in A (\Q)_{tors}$. As $e=1 < \ell - 1$, according to the specialisation lemma of Raynaud (\cite{Mazur78}, Proposition 1.1), the order of $z$ is the same as the order of its reduction $z_\ell$ in $A_\Z( \F_\ell)$. Here, we have
\[
 z_\ell = z_\lambda = f_\Z (y) _\lambda - f_\Z (x)_ \lambda = f_\Z ( y_\lambda) - f_\Z ( x_\lambda ) = 0,
\]
because $x_\lambda = y_\lambda$. Therefore, $z=0$ and $f(y) = f(x)$. As $f$ is a formal immersion at $x_\lambda= y_\lambda$, this implies $x=y$.
In the case $\ell=2$, we do not have $e<\ell-1$ anymore, but thanks to Proposition 4.6 of \cite{Mazur77}, we know that $z$ is either 0 or a 2-torsion point in $A(\Q)$ generating a copy of $\mu_2$ in $A_\Z$. When $z$ is 0, the proof of the previous case works as well.
\end{proof}

\begin{rem} The main difference with Proposition 3.1 of \cite{Ellenberg04} is that the latter one did not deal with the case $\ell=2$ or the fact that $z$ is defined over $\Q$ while $x$ and $y$ are defined over a bigger field (which will be the case here). For the case $\ell=2$, we will need to rule out the case when $f(y)-f(x)$ is 2-torsion in $A ( \Q)$ to prove Proposition \ref{bonnereductionpartoutBorel}. In our study, $f(y) - f(x)$ belongs to the cuspidal subgroup $C$ of $J_0 (p)(\Q)$, and we know (\cite{Mazur77}, Proposition 11.11) its Zariski closure indeed contains a $\mu_2$ when it is of even order. This is why we actually need the analysis of the components group of the jacobian (see proof of Lemma \ref{lemcalculgP}).
\end{rem}

We recall the following classical result on  the Albanese morphism from $X_0(p)$ to $J_0(p)$ (easily obtained from the $q$-expansion principle), fundamental for Mazur's method.
\begin{prop}
 \label{immformelleAlbanese}
Let $p=11$ ou $p >13$ be a prime number. Let $\T$ be the Hecke subalgebra of $\End_\Q(J_0(p))$ generated over $\Z$ by the Hecke operators $T_n,n \in \N^*$.
Let $\phi : X_0 (p)_\Q \ra J_0 (p)_\Q$ be the Albanese morphism sending $\infty$ to 0 and $\phi_\Z : X_0 (p)_\Z ^{\rm{smooth}} \ra J_0 (p)_\Z$ its extension by Néron mapping property. For every $t \in \T$ and every prime $\ell$, $t \circ \phi_\Z$ is a formal immersion at $\infty_{\F_\ell} \in X_0 (p) ( \F_\ell)$ if and only if $t \notin \ell \T$.
\end{prop}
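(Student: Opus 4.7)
The plan is to reduce the formal-immersion criterion to a linear-algebra condition on cotangent spaces and then identify it with the stated condition via the $q$-expansion principle and the Hecke pairing.

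First I would observe that the cusp $\infty$ extends to a smooth section of $X_0(p)_\Z \to \Spec \Z$ (this comes from the Tate curve construction of a neighbourhood of $\infty$ in the Deligne--Rapoport model), so that $\infty_{\F_\ell}$ lies in $X_0(p)^{\rm smooth}_\Z$ for every $\ell$, and the completed local ring there is $\F_\ell[[q]]$ with $q$ the Tate parameter. Being a formal immersion at $\infty_{\F_\ell}$ means that the map of completed local rings $\widehat{\Ocal}_{J_0(p)_\Z,0,\F_\ell} \ra \F_\ell[[q]]$ is surjective; by Nakayama this is equivalent to the induced $\F_\ell$-linear map
\[
(t \circ \phi_\Z)^{\ast} : \Cot_{0,\F_\ell} J_0(p)_\Z \lra \Cot_{\infty,\F_\ell} X_0(p)_\Z \cong \F_\ell
\]
being surjective, i.e.\ nonzero.

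Next I would appeal to the $q$-expansion principle, which gives a canonical identification
\[
\Cot_{0} J_0(p)_\Z \;\cong\; \sdeux, \qquad f \longmapsto f(q)\,\tfrac{dq}{q},
\]
compatible with base change in $\Z$ and with the Hecke action. Under this identification, pullback by the Albanese morphism $\phi$ at the cusp $\infty$ reads a cusp form $f = \sum_{n \geq 1} a_n(f)\,q^n$ off as $a_1(f)\,dq/q$. Consequently the cotangent map of $t \circ \phi$ at $\infty$ is the $\Z$-linear form $f \mapsto a_1(tf)$, where $t$ acts on $\sdeux$ through its Hecke action.

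It remains to identify the condition ``this form is nonzero modulo $\ell$'' with $t \notin \ell \T$. This is exactly the content of the classical perfect duality
\[
\T \times \sdeux \lra \Z, \qquad (t, f) \longmapsto a_1(tf),
\]
which follows from the identity $a_1(T_n f) = a_n(f)$ and the fact that a cusp form is determined by its $q$-expansion. Perfectness of the pairing gives the equivalence $(t \bmod \ell \T) = 0 \Llra (f \mapsto a_1(tf) \bmod \ell)$ is zero on $\sdeux \otimes \F_\ell$, which is the desired criterion. The only point demanding some care is the initial reduction to a cotangent-space statement when $\ell = p$: one must know that $J_0(p)_\Z$ is smooth at $0$ (it is, by semistable reduction of $J_0(p)$) and that $\infty$ specialises to the smooth locus in characteristic $p$, both of which are standard via the Tate curve; beyond this, the argument is formal.
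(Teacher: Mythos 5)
Your proof is correct and is essentially the standard argument that the paper implicitly invokes: the paper states this result without proof, calling it classical and attributing it to the $q$-expansion principle, and your chain of reductions (formal immersion $\Rightarrow$ surjectivity on cotangent spaces by Nakayama, identification of $\Cot_0 J_0(p)_\Z$ with integral cusp forms, reading off $a_1$ via the $q$-expansion at $\infty$, and perfectness of the pairing $(t,f)\mapsto a_1(tf)$ between $\T$ and $S_2(\G_0(p),\Z)$) is exactly the expected proof following Mazur. The only cosmetic imprecision is calling the completed local ring at $\infty_{\F_\ell}$ on the arithmetic surface $\F_\ell[[q]]$ rather than $\Z_\ell[[q]]$ — but since both points lie over $(\ell)\in\Spec\Z$ the class of $\ell$ maps to itself, so the criterion correctly reduces to the relative cotangent map over $\F_\ell$, and your argument goes through unchanged.
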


\subsection{Eisenstein quotient and components group of the jacobian}
\label{Eisquotcompongroup}
 The Eisenstein ideal $\Ical$ of $\T$ (defined in \cite{Mazur77}) is the ideal $\Ical := \langle 1+\ell - T_\ell, 1+w_p, l \in \Pcal_p \rangle$, with $\Pcal_p$ the set of prime numbers different from $p$.
The Eisenstein quotient $\tildeJp$ is the quotient of $J_0 (p)$ by the abelian subvariety generated by $\gamma_\Ical . J_0 (p)_\Q$, with
$\gamma_\Ical = \bigcap_{n \in \N} \Ical ^n.$
It is defined over $\Q$, and benefits the following properties (in particular, it is the first historical example of a nontrivial rank zero quotient of $J_0(p)$).
\begin{prop}
\label{propquotEisenstein}
 Let $p=11$ or $p>13$ be a prime number.
Let $n=\operatorname{num} (\frac{p-1}{12})$. Let $C$ be the cuspidal subgroup of $J_0 (p)(\Q)$, generated by $\cl ([0]-[\infty])$.

$(a)$ The rational torsion of $J_0(p)$ is exactly $C$, and it is a cyclic subgroup of order $n$ (\cite{Mazur77}, Theorem 1.2 p.142 and Proposition 11.1).

$(b)$ The canonical projection $J_0 (p) \ra \tildeJp$ is defined over $\Q$ and induces a bijection between $C=J_0 (p)(\Q)_{tors}$ and $\tildeJp (\Q)$ which is therefore a cyclic subgroup of order $n$ (\cite{Mazur77}, Corollary 1.4 p.143).

$(c)$ The Eisenstein ideal is exactly the kernel of the map $t \mt t. \cl ([0]-[\infty])$ from $\T$ to $C$, which induces an isomorphism $\T / \Ical \ra \Z / n \Z$ (\cite{Mazur77}, Proposition 11.1). In particular, when $t = 1 \mod \Ical$, $t$ acts as the identity on $C$.
\end{prop}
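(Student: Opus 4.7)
The three assertions are deep results of Mazur's \emph{Modular Curves and the Eisenstein Ideal}, so my plan is to recall the architecture of his proof rather than give an independent argument. The first order of business is to fix the cuspidal subgroup $C = \langle \cl([0]-[\infty]) \rangle$ and to compute its order. Since the two cusps $0$ and $\infty$ of $X_0(p)$ are both rational and are exchanged by $w_p$, the class is Galois-invariant and, by Manin--Drinfeld (or by exhibiting an explicit modular unit), it is torsion. The exact order $n = \num((p-1)/12)$ can be read off from Ogg's computation of the order of the cuspidal divisor via the $q$-expansion of the relevant modular unit on $X_0(p)$. This gives the lower bound $\#J_0(p)(\Q)_{\rm tors} \geq n$ in part $(a)$.

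For the reverse inclusion $J_0(p)(\Q)_{\rm tors} \subseteq C$, the key tool is the injectivity of reduction modulo a prime $\ell \neq p$: for $\ell > 2$ the Raynaud specialisation lemma applies directly, while the case $\ell = 2$ requires the more delicate analysis of $\mu_2$-components that Mazur carries out (and which is also invoked in the remark after Proposition~\ref{superpropimmformelle}). Combining this with the Eichler--Shimura congruence $T_\ell \equiv \mathrm{Frob}_\ell + \ell\, \mathrm{Frob}_\ell^{-1} \pmod{\ell}$ shows that on $J_0(p)(\Q)_{\rm tors}$ the operator $T_\ell - (1+\ell)$ kills every class, so the Eisenstein ideal $\Ical$ annihilates the rational torsion. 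A careful study of $J_0(p)[\Ical]$ then identifies it with $C$; this is the heart of Mazur's argument and is the step I would expect to be by far the hardest, as it requires controlling the Eisenstein part of the jacobian scheme-theoretically.

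Granting $(a)$, part $(c)$ is essentially formal. The Hecke module $C$ is cyclic of order $n$, generated by $\cl([0]-[\infty])$, and the map $\T \to C$ sending $t \mapsto t \cdot \cl([0]-[\infty])$ is surjective. One checks on generators that $T_\ell$ acts as $1+\ell$ on $C$ (using the action of Hecke correspondences on the cusps) and that $w_p$ acts as $-1$ (since $w_p$ exchanges $0$ and $\infty$). Hence $\Ical$ lies in the kernel, and comparing orders via $(a)$ forces $\T/\Ical \cong \Z/n\Z$, with the claim on elements $t \equiv 1 \pmod{\Ical}$ acting as identity on $C$ following immediately.

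For $(b)$, the canonical projection $\pi : J_0(p) \to \widetilde{J}(p)$ is by definition rational, and its kernel is the abelian subvariety generated by $\gamma_\Ical J_0(p)$. Since $\Ical$ annihilates $C$ by $(c)$, and since $\gamma_\Ical \subseteq \Ical$, the kernel of $\pi$ meets $C$ trivially, so $\pi$ injects $C$ into $\widetilde{J}(p)(\Q)$. To show surjectivity onto $\widetilde{J}(p)(\Q)$, one invokes Mazur's theorem that $\widetilde{J}(p)$ has Mordell--Weil rank zero over $\Q$ (proved via a descent argument using the Eisenstein ideal), so $\widetilde{J}(p)(\Q) = \widetilde{J}(p)(\Q)_{\rm tors}$, and then the injectivity of reduction combined with $(a)$ forces the image to be exactly $\pi(C)$. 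The rank-zero statement for $\widetilde{J}(p)$ is the other genuinely difficult input, and in a self-contained proof it would be the other main obstacle.
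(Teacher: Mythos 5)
The paper gives no proof of this proposition: all three parts are stated as direct citations to Mazur's \emph{Modular curves and the Eisenstein ideal}, with page and proposition references. Your proposal does the same thing, but adds a narrative sketch of Mazur's argument, so the approaches coincide; the sketch is broadly faithful but contains one real overstatement worth flagging. You claim part $(c)$ is ``essentially formal'' given $(a)$, with the isomorphism $\T/\Ical \cong \Z/n\Z$ obtained ``by comparing orders.'' That is backwards: knowing $|C|=n$ only gives you a surjection $\T/\Ical \twoheadrightarrow C \cong \Z/n\Z$, since $\Ical$ is easily seen to lie in the kernel. The hard content of $(c)$ is the reverse inequality $|\T/\Ical| \le n$, which is Mazur's computation of the index of the Eisenstein ideal (via the Eisenstein series of weight 2 and level $p$, not via $(a)$), and it is one of the central inputs to the whole chapter, on par with the rank-zero statement you correctly single out in $(b)$. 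A smaller point: the Eichler--Shimura relation you invoke in $(a)$ is usually stated as $T_\ell = F + \ell F^{-1}$ as endomorphisms of $J_0(p) \otimes \F_\ell$ (not a congruence $\bmod\ \ell$), from which one deduces that $T_\ell$ acts as $1+\ell$ on the $\F_\ell$-rational points; the phrasing in your proposal is garbled, though the intended argument is the right one.
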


It is crucial for the proofs of Propositions \ref{bonnereductionpartoutBorel} and \ref{BornebonnereductioncasCartandeploye} for prime ideals above 2 to cancel $\gamma_\Ical$ by ``good elements of $\T$''. The following lemma will allow us to do so.

\begin{lem}
\label{Nakayamadonneannulateurgammaical}
For every prime number $\ell$, there exists a Hecke operator $t \in \T \backslash \ell \T$ such that $t = 1 \mod \Ical$ and $t \cdot \gamma_\Ical=0$. 
Moreover, for such a $t \in \T$,  $t \cdot (1 + w_p)=0$.
\end{lem}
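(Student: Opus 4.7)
The plan is to construct $t$ via Nakayama's lemma applied to the ideal $\gamma_\Ical$ of the Hecke algebra, adjust by an $\ell$-saturation argument to ensure $t\notin\ell\T$, and deduce the moreover clause from the eigenform decomposition of $\T$ combined with Mazur's analysis of the Eisenstein quotient.

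Since $\T$ is Noetherian (being finitely generated over $\Z$), Artin--Rees applied to $\gamma_\Ical=\bigcap_n\Ical^n\subset\T$ yields $\Ical\gamma_\Ical=\gamma_\Ical$, and Nakayama's lemma produces $s\in 1+\Ical$ with $s\gamma_\Ical=0$. If $\ell\mid n$, then $\gm_\ell:=\Ical+\ell\T$ is a proper maximal ideal of $\T$, so any $s\in 1+\Ical$ automatically avoids $\gm_\ell\supset\ell\T$, and we may take $t:=s$. If instead $\ell\nmid n$, both $\Ical$ and $\operatorname{Ann}_\T(\gamma_\Ical)$ are $\ell$-saturated in $\T$: the former because $\T/\Ical\cong\Z/n\Z$ has no $\ell$-torsion, the latter because $\T/\operatorname{Ann}_\T(\gamma_\Ical)$ injects into $\operatorname{End}_\Z(\gamma_\Ical)$ and is torsion-free (as $\T$ is an order in $\prod_fK_f$). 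Their intersection $J:=\Ical\cap\operatorname{Ann}_\T(\gamma_\Ical)$ is then $\ell$-saturated and has the same positive $\Z$-rank as $\operatorname{Ann}_\T(\gamma_\Ical)$ (which contains $s\neq 0$, and $\Ical$ has finite index in $\T$). A nonzero $\ell$-saturated free $\Z$-submodule cannot lie inside $\ell\T$ (otherwise $J=\ell J$), so picking $r\in J\setminus\ell\T$ and setting $t:=s+r$ (or $t:=s$ if $s\notin\ell\T$ already) produces the required operator.

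For the moreover clause, any such $t$ lies in $\operatorname{Ann}_\T(\gamma_\Ical)$. In the decomposition $\T\otimes\Q=\prod_fK_f$ over Galois orbits of weight-two newforms, $\operatorname{Ann}_\T(\gamma_\Ical)\otimes\Q$ is exactly the product of the Eisenstein $K_f$-components, so $t_f=0$ on every non-Eisenstein factor. By Mazur's theorem that the Eisenstein quotient $\widetilde J(p)$ is (up to isogeny) contained in the $w_p=-1$ eigenspace of $J_0(p)$, every Eisenstein newform $f$ satisfies $\epsilon_f=-1$, i.e.\ $(1+w_p)_f=0$. Hence $t(1+w_p)$ vanishes componentwise in $\T\otimes\Q$, and $\Z$-torsion-freeness of $\T$ (as an order in $\prod_fK_f$) forces $t(1+w_p)=0$ in $\T$ itself.

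The most delicate point is the moreover clause, which rests on the nontrivial Mazur fact that Eisenstein newforms of prime level $p$ all have Atkin--Lehner eigenvalue $-1$; without this input one can only derive $t(1+w_p)=0$ after inverting $2$, via the identity $(1+w_p)^n=2^{n-1}(1+w_p)$ which puts the idempotent $(1+w_p)/2$ in $\bigcap_n\Ical[1/2]^n$. The rest of the argument is essentially formal bookkeeping with ideals and $\ell$-saturation in the Hecke algebra.
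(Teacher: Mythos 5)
Your proof is correct and its skeleton matches the paper's: Artin--Rees plus Nakayama to produce $s\in 1+\Ical$ killing $\gamma_\Ical$, the observation that $\ell\mid n$ forces $s\notin\ell\T$ automatically via $\T/\Ical\cong\Z/n\Z$, and the moreover clause resting on Mazur's fact (Prop.~17.10 of his Eisenstein ideal paper) that the Eisenstein quotient sits in the $w_p=-1$ part of $J_0(p)$. The one place you genuinely diverge is the case $\ell\nmid n$. The paper's route is concrete: it picks an integer $\ell'$ coprime to $\ell$ with $\ell'\equiv\ell\bmod n$, writes $s=\ell^k t''$ with $t''\in\T\setminus\ell\T$ (possible since $\T$ is a finite free $\Z$-module), and replaces $s$ by $t'=(\ell')^k t''$, checking directly that $t'\equiv 1\bmod\Ical$, $t'\gamma_\Ical=0$, and $t'\notin\ell\T$. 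Your route is more structural: you show both $\Ical$ and $\operatorname{Ann}_\T(\gamma_\Ical)$ are $\ell$-saturated, deduce that their intersection $J$ is a nonzero $\ell$-saturated $\Z$-lattice, hence $J\not\subseteq\ell\T$, and perturb $s$ by an element of $J\setminus\ell\T$. Both are fully valid; the paper's trick is shorter and more elementary, while your saturation argument is arguably cleaner to verify and makes the role of torsion-freeness of $\T/\operatorname{Ann}_\T(\gamma_\Ical)$ explicit. Your spelling-out of the moreover clause via the decomposition $\T\otimes\Q\cong\prod_f K_f$ is also a useful unpacking of what the paper's one-line citation of Mazur's Proposition 17.10 actually encodes, and the conclusion lifts from $\T\otimes\Q$ to $\T$ exactly as you say by torsion-freeness.
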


\begin{proof}
As $\T$ is a noetherian ring, by Artin-Rees lemma, we have $\Ical \cdot \gamma_\Ical = \gamma_\Ical$. Hence, by Nakayama's lemma, $\gamma_\Ical$ is cancelled by some element $t \in \T$ congruent to 1 mod $\Ical$. For a fixed prime $\ell$, we can even choose such a $t$ not in $\ell \T$ : if $\ell$ divides $n$, this is automatic because of Proposition \ref{propquotEisenstein} $(c)$, otherwise we can choose an integer $\ell'$ prime to $\ell$ but congruent to $\ell \mod n$. Then, for $k$ such that $t \in \ell ^k \T \backslash \ell^{k+1} \T$, the operator $t' = (\ell ' / \ell) ^k t  \in \T \backslash \ell \T$ but is still congruent to $1 \mod \Ical$ while cancelling $\gamma_\Ical$.
Finally, such a $t \in \T$ automatically cancels $(1 + w_p)$ under the previous conditions, because the Eisenstein quotient is a quotient of the minus part of the jacobian $J_0 (p)$ (\cite{Mazur77}, Chapter 2, Proposition 17.10).
\end{proof}

We need to describe the components group of the special fiber of $J_0 (p)_R$, which is made possible by Theorem 9.6.1 of \cite{BoschRaynaud}. 
In short, if $R$ is a discrete valuation ring of mixed characteristic with fraction field $K$ and perfect residual field $k$, and $\Phi$ is the group of irreducible components of $J_0(p)_R \otimes \overline{k}$, this theorem gives a description of $\Phi$ by generators $(\Ccal)$, and relations given by the  irreducible components of $\widetilde{X_0(p)_R}$ (the minimal regular model of $X_0(p)$ over $R$) and their intersection numbers, and this description is compatible with the reduction morphism from $J_0(p)(K)$ to $J_0(p)_R \otimes \overline{k}$.

A first application of this to $J_0 (p)_\Z$ gives the following results (\cite{Mazur77}, Theorem 10 and Appendix).
\begin{prop}
\label{Mazure1}
 Let $p=11$ or $p>13$ be a prime number and $n = \operatorname{num}  \left( \frac{p-1}{12} \right)$. Let $\phi : X_0 (p)_\Q \ra J_0 (p)_\Q$ be the Albanese morphism sending $\infty$ to 0.

$(a)$  Reduction modulo $p$ of the cuspidal subgroup $C=\langle \cl([0]-[\infty]) \rangle$ induces an isomorphism from $C$ to the group of components $\Phi$ of $J_0 (p)_\Z \otimes \Fpb$.

$(b)$ For every point $Q \in J_0 (p)( \Q)$, we define $\rho (Q) \in \Z/ n \Z$ the image of $Q$ by 
\[
J_0 (p) ( \Q) \ra J_0(p)_\Z (\Fpb) \ra \Phi \cong C \cong \Z / n \Z.                                                                                                          
\]                                    
Then, for every point $P \in Y_0 (p) (\Q)$ :                                                                      

$\bullet$ If $E$ has potentially ordinary or multiplicative reduction modulo $p$, $\rho(\phi(P))=0$ if $C_p$ defines a separable isogeny modulo $p$, and $\rho(\phi (P)) = 1$ otherwise.

$\bullet$ If $E$ has potentially supersingular reduction modulo $p$, either $p = -1 \mod 4$, $j(E)=0 \mod p$ and
 $\rho (\phi(P)) = 1/2$, or $p=-1 \mod 3$, $j(E)=1728 \mod p$  and $\rho (\phi(P)) = 1/3$ or $2/3$.
\end{prop}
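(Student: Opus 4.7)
The plan is to apply the Bosch--Raynaud description of the component group (Theorem 9.6.1 of their book, cited just above the statement) to an explicit minimal regular model of $X_0(p)$ over $\Z_p$. First I would set up the geometry using the Deligne--Rapoport model: the special fiber $X_0(p)_{\Z_p} \otimes \Fpb$ is a union of two copies $\Sigma_\infty$ and $\Sigma_0$ of $X(1)_\Fpb$, parametrising respectively pairs $(E,C_p)$ with $C_p$ the connected (i.e.\ kernel of Frobenius) and the \'etale subgroup of order $p$ when $E$ is ordinary; these two components cross transversally at the supersingular $j$-invariants. At such a crossing $s$ the local ring is smooth when $\Aut(E_s) = \{\pm 1\}$ but not when $\Aut(E_s)$ is larger; resolving, one gets the minimal regular model $\widetilde{X_0(p)_{\Z_p}}$ by inserting a chain of $e_s - 1$ exceptional $\P^1$'s at $s$, where $e_s := |\Aut(E_s)|/2 \in \{1,2,3\}$, with $e_s = 2$ only at $s=j(1728)$ when $p \equiv -1 \bmod 4$ and $e_s = 3$ only at $s=j(0)$ when $p \equiv -1 \bmod 3$. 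The cusps $\infty$ and $0$ extend to sections of $X_0(p)_\Z^{\mathrm{smooth}}$ and specialize to smooth points of $\Sigma_\infty$ and $\Sigma_0$ respectively (using the Tate curve: the cusp $\infty$ corresponds to the canonical multiplicative subgroup being the chosen $C_p$, hence lies on the connected side).

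For part $(a)$ I would write down the intersection matrix of all components, noting that the number of supersingular $j$-invariants equals $\lfloor p/12 \rfloor$ with corrections at $0$ and $1728$ in the appropriate congruence classes of $p$. Bosch--Raynaud then identifies $\Phi$ as the cokernel of this matrix (modulo the principal divisor of a uniformiser); a straightforward computation, which is essentially the content of Mazur's Appendix, yields $\Phi \cong \Z/n\Z$ with $n = \operatorname{num}((p-1)/12)$, generated by $[\Sigma_0] - [\Sigma_\infty]$. Since $[0]$ and $[\infty]$ reduce to the smooth loci of $\Sigma_0$ and $\Sigma_\infty$ respectively, the class $\mathrm{cl}([0]-[\infty])$ maps to this generator, and by Proposition \ref{propquotEisenstein} $(a)$ the cuspidal group $C$ is exactly $J_0(p)(\Q)_{\mathrm{tors}}$ and is cyclic of order $n$, so reduction gives the claimed isomorphism $C \cong \Phi$.

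For part $(b)$, given $P = (E,C_p) \in Y_0(p)(\Q)$, pass to a finite extension of $\Q_p$ over which $E$ acquires good reduction. In the ordinary (or multiplicative) case the $p$-torsion subgroup scheme is an extension of an \'etale by a connected component of order $p$, so $C_p$ is either this connected subgroup or one of the $p$ \'etale lifts. In the former case the isogeny $E \to E/C_p$ is inseparable modulo $p$ and the specialization of $P$ lies on $\Sigma_\infty$, giving $\rho(\phi(P)) = 0$ after identifying $[\infty]$ with the same component; in the latter case the isogeny is separable, the specialization lies on $\Sigma_0$, and we read off $\rho(\phi(P))=1$. In the supersingular case $P$ reduces to the supersingular point $s$, which is singular in the unresolved model; when $e_s = 1$ this does not happen inside $X_0(p)_\Z^{\mathrm{smooth}}$, but precisely at $j=1728$ with $p\equiv -1 \bmod 4$ (a chain of one $\P^1$) and $j=0$ with $p\equiv -1 \bmod 3$ (a chain of two $\P^1$'s) the point $P$ specializes to a component of the resolution chain. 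Its class in $\Phi$ is then the fractional combination of $[\Sigma_0]-[\Sigma_\infty]$ dictated by the intersection matrix of the chain, giving respectively $1/2$ and $\{1/3,2/3\}$.

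The main technical obstacle is precisely this last computation in the supersingular case: one must base change to a ramified extension large enough that the regular model is obtained by the predicted chain of $\P^1$'s, carefully track which exceptional component the section through $P$ meets, and convert this intersection data into an element of $\Phi$ through Bosch--Raynaud's formalism. The interior components of the chain have intersection pairings with the principal divisor of the uniformiser that force the fractional coefficients $1/e_s$, and the symmetry $w_p$ (exchanging $\Sigma_0$ and $\Sigma_\infty$) distinguishes the two choices $1/3$ and $2/3$ at $j=0$. This is exactly the analysis carried out by Mazur in the Appendix of \cite{Mazur77}, to which the statement refers.
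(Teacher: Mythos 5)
The paper does not actually prove Proposition~\ref{Mazure1}: it cites it to Mazur's Theorem~10 and Appendix of \cite{Mazur77} (and to Edixhoven's appendix to \cite{BertoliniDarmonEdixhoven} for the model), presenting it as a known consequence of the Deligne--Rapoport description of $X_0(p)_{\Z_p}$ together with the Bosch--Raynaud formalism. Your sketch reconstructs exactly that argument, so in terms of method you are faithful to the reference.

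There is, however, a genuine sign problem in your treatment of part~$(b)$ that you need to resolve. You assert that $\infty$ specializes to the connected ($\ker F$, i.e.\ multiplicative) component $\Sigma_\infty$ and conclude that $\rho(\phi(P))=0$ when $C_p$ is \emph{inseparable} (so $P$ lies on the same component as $\infty$) and $\rho(\phi(P))=1$ when $C_p$ is \emph{separable}. The statement being proved claims the opposite: $\rho(\phi(P))=0$ when $C_p$ is separable modulo $p$, and $\rho(\phi(P))=1$ otherwise. This is not a harmless naming choice, because $\rho$ is pinned down by the explicit chain $\Phi \cong C \cong \Z/n\Z$ and by the choice of $\phi$ sending $\infty$ to $0$. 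Concretely, the paper's own conventions (the lemma preceding Proposition~\ref{groupecomposantescasgeneral}, where $Z'$ is declared to be the component carrying the reduction of $\infty$, and $Z'$ is identified with the ``Verschiebung'' component, i.e.\ the \'etale side, and Proposition~\ref{groupecomposantescasgeneral}~$(b)$, which identifies $\overline{Z}$ with the reduction of $\cl([0]-[\infty])$) place $\infty$ on the \emph{\'etale} side and are used that way again in the proof of Lemma~\ref{lemcalculgP}. Your moduli argument (Tate curve with canonical $\mu_p$) places $\infty$ on the \emph{connected} side. One of the two assignments of $\infty$ is wrong, and you should trace through Mazur's Appendix to see which convention he adopts before you can claim to have proved the statement as written; as it stands your proposal contradicts it.

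Two smaller points. First, in the supersingular case you say one should ``base change to a ramified extension large enough that the regular model is obtained by the predicted chain of $\P^1$'s.'' This is off: part~$(b)$ concerns $P\in Y_0(p)(\Q)$ and the reduction map $J_0(p)(\Q)\to\Phi$ over $\Z_p$ itself. The chain of exceptional lines at the non-regular supersingular points already exists in the minimal regular model over $\Z_p$ (since $k_s>1$ forces $ek_s>1$ with $e=1$); the $\Q$-rational section of $\widetilde{X_0(p)_{\Z_p}}$ through $P$ simply lands on one of those exceptional components, and its class in $\Phi$ gives the fractional values $1/2$, $1/3$, $2/3$ via Proposition~\ref{groupecomposantescasgeneral}~$(c)$. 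No ramified base change is involved. Second, the number of geometric supersingular points is not $\lfloor p/12\rfloor$ but rather governed by the Eichler mass formula recorded in \eqref{massformula}; this does not affect the structure of your argument but should be stated correctly if the computation of $\Phi\cong\Z/n\Z$ is to be carried out.
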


The idea underlying $(b)$ of Proposition is that $\rho \circ \phi ( X_0(p) ( \Q))$ is small, and in particular it does not not the unique nontrivial 2-torsion point of $C$ for $p=11$ or $p>13$. 
This is the same idea that we will use in a more general analysis in Lemma \ref{lemcalculgP}

Let $R$ be a complete discrete valuation ring of characteristic 0, with fraction field $K$ and residual field $k$ that we assume perfect of characteristic $p$. Let $\pi$ be a uniformizer of $R$, $v$ be the normalised valuation on $K$ and $e=v(p)$ the absolute ramification of $R$.

The following result of Edixhoven (section 3 of the Appendix of \cite{BertoliniDarmonEdixhoven}) generalises Theorem 1.1 of the Appendix of \cite{Mazur77}.
\begin{prop}
 $(a)$ The scheme $X_0 (p)_R$ is smooth over $R$ outside its supersingular points in the special fiber.

$(b)$ The geometric special fiber $X_0 (p)_{\overline{k}}$ is made up with two copies of $\P^1 (\overline{k})$ (which are also its irreducible components) crossing transversally at supersingular elliptic curves : the first one, called $Z$, parametrises elliptic curves endowed with their Frobenius isogeny, and the second one, called $Z'$, parametrises elliptic curves endowed with their Vershiebung isogeny.

$(c)$ Let $s$ be a supersingular point of $X_0 (p)_R \otimes_R \overline{k}$ corresponding to a couple $(E,C_p)$ with $E$ an elliptic curve over $\overline{k}$ and $C_p$ a $p$-isogeny of $E$. We call width of $s$ the integer $k_s =\! |\! \Aut (E,C_p)|/2$.
The scheme $X_0(p)_R$
is nonregular at $s$ if and only if $e k_s >1$. More precisely, the local completed ring of $X_0(p)_R$ at $s$ is isomorphic to $R [[X,Y]] / (X Y - \pi ^{ek_s})$.

If $p \neq 2,3$, then $k_s>1$ implies that $j(E) = 0$, $k_s=3$ and $p = - 1\mod 3$, or $j(E)=1728$, $k_s=2$ and $p = -1 \mod 4$.

$(d)$ The geometric fiber $(\widetilde{X_0 (p)_R})_{\overline{k}}$ of the minimal regular model over $R$ is therefore obtained by blowing up in $X_0 (p)_{\overline{k}}$ every nonregular point $s$ to a chain of $ek_s-1$ projective lines. These projective lines, as Cartier divisors, have auto-intersection $-2$.
\end{prop}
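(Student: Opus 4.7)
The plan is to combine the moduli-theoretic description of $X_0(p)$ with Serre--Tate deformation theory for the fine analysis at supersingular points. Throughout, I would work with the moduli interpretation of $X_0(p)_R$: points parametrise pairs $(E,C_p)$ with $C_p$ a locally free subgroup scheme of order $p$ of an elliptic curve $E$.

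For parts $(a)$ and $(b)$, I would argue as follows. In the special fiber, $E$ is an elliptic curve over $\overline{k}$ of characteristic $p$, and the only subgroup schemes of order $p$ are $\ker F$ and $\ker V$, where $F$ and $V$ are Frobenius and Verschiebung. When $E$ is ordinary these are distinct; when $E$ is supersingular they coincide. This naturally produces two components: $Z$, parametrising $(E,\ker F)$, and $Z'$, parametrising $(E,\ker V)$, each isomorphic via the forgetful map to $X(1)_{\overline{k}} \cong \P^1_{\overline{k}}$, and meeting exactly at supersingular points. Away from these intersection points, the moduli problem is étale locally representable (by a Drinfeld-level type argument) and hence smooth over $R$, giving $(a)$; the set-theoretic decomposition and transversality are immediate from the ordinary/supersingular dichotomy, giving $(b)$.

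For $(c)$, the key tool is Serre--Tate deformation theory. At a supersingular point $s=(E_0,C_0)$, the formal deformation functor of $(E_0,C_0)$ rigidified by its automorphisms is prorepresented, and via Serre--Tate the universal deformation ring of the associated $p$-divisible group (of height $2$, one-dimensional) can be computed explicitly: it has the form $R[[X,Y]]/(XY-\pi^e)$, with $X$ and $Y$ being local parameters cutting out the Frobenius and Verschiebung branches respectively. Descending from this rigidified problem to the coarse moduli scheme $X_0(p)_R$ requires quotienting by $\Aut(E_0,C_0)$; the subgroup $\{\pm 1\}$ always acts trivially on the coarse moduli (explaining the denominator $2$ in the definition $k_s=|\Aut(E_0,C_0)|/2$), while any extra automorphisms rescale $X$ and $Y$ by a root of unity, so invariants multiply the exponent of $\pi$ by $k_s$, yielding the announced local ring $R[[X,Y]]/(XY-\pi^{ek_s})$. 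The list of exceptional $j$-invariants with $k_s>1$ comes from the classification of elliptic curves with extra automorphisms (only $j=0$ or $j=1728$ when $p\neq 2,3$), together with the condition that the corresponding curve be supersingular, which forces $p\equiv -1\pmod 3$ in the first case and $p\equiv -1\pmod 4$ in the second. Part $(d)$ is then pure resolution of singularities: the equation $XY=\pi^{n}$ with $n=ek_s\geq 2$ defines an $A_{n-1}$ Du Val singularity, whose minimal resolution is the standard chain of $n-1$ smooth rational curves of self-intersection $-2$ meeting transversally; applying this at every nonregular supersingular point in the special fiber (and doing nothing elsewhere, where $X_0(p)_R$ is already smooth by $(a)$) yields the claimed description of $\widetilde{X_0(p)_R}$. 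The main obstacle I anticipate is the bookkeeping in the coarse-versus-fine moduli descent in $(c)$: one must carefully argue that only the quotient by extra automorphisms (beyond $\pm 1$) modifies the local ring, so as to land precisely on the exponent $ek_s$ rather than $e\cdot|\Aut(E_0,C_0)|$.
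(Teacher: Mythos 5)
The paper does not actually prove this proposition; it is recalled verbatim from Edixhoven's appendix to \cite{BertoliniDarmonEdixhoven}, which itself rests on the Deligne--Rapoport and Katz--Mazur analysis of $X_0(p)$ at supersingular points. Your sketch reconstructs precisely that argument: the ordinary/supersingular dichotomy for the two-component description in $(a)$--$(b)$, the deformation-theoretic computation of the completed local ring followed by taking invariants under the extra automorphisms for $(c)$, and the standard minimal resolution of an $A_{ek_s-1}$ Du Val singularity for $(d)$. Your $\mu_{k_s}$-invariant computation (set $U=X^{k_s}$, $V=Y^{k_s}$, use $XY=\pi^e\in R$, so $UV=\pi^{ek_s}$) and the list of exceptional $j$-invariants with the congruence conditions on $p$ are both correct.

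One point deserves tightening. In $(c)$ you write that ``via Serre--Tate the universal deformation ring of the associated $p$-divisible group (of height $2$, one-dimensional) can be computed explicitly: it has the form $R[[X,Y]]/(XY-\pi^e)$.'' As phrased this conflates two deformation problems: the $p$-divisible group of a supersingular curve alone has a \emph{smooth} one-parameter universal deformation ring $R[[t]]$ (equivalently the deformation space of $E_0$ without level structure). It is the deformation of the pair $(E_0,C_0)$ --- the formal group together with a finite flat order-$p$ subgroup, i.e.\ the $\Gamma_0(p)$-structure --- that produces $R[[X,Y]]/(XY-\pi^e)$, with $X$ and $Y$ local coordinates along the Frobenius and Verschiebung branches. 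Relatedly, the ``rigidification'' step is best made precise by adding auxiliary prime-to-$p$ level structure (e.g.\ full level $N\geq 3$) to get a fine moduli scheme, identifying the completed local ring there, and then observing that the residual group acting through that point is exactly $\Aut(E_0,C_0)$, whose $\{\pm 1\}$ acts trivially. With these clarifications the proof is sound and matches the cited source in approach.
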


For the following proposition on components group, we need some notations. First, we can suppose $e>1$ because the case $e=1$ is dealt with Proposition \ref{Mazure1}. Therefore, every supersingular point $s$ in the special fiber \emph{is} nonregular. 
To provide some intuition on the proof, we define (as in the Appendix of \cite{BertoliniDarmonEdixhoven}) the dual graph $\widetilde{G}$ associated to $\widetilde{X_0(p)_R}_{\overline{k}}$ : its vertices are the irreducible components of $(\widetilde{X_0 (p)_R})_{\overline{k}}$ and an edge links two vertices if and only if the two components intersect. Thanks to Theorem 9.6.1 of \cite{BoschRaynaud}, the problem therefore becomes a problem on $\widetilde{G}$ : compute the abelian group $\Phi$ given by generators (its vertices) and relations (the image of the laplacian operator on the graph).

We define $\Scal$ (resp. $\Scal'$) the set of cardinality $S$ (resp. $S'$) of supersingular points of $X_0 (p)_{\overline{k}}$ (resp. supersingular points with $j$-invariant different from 0 and 1728).
We also define $I=1$ if the elliptic curve with $j$-invariant $1728$ is supersingular in $k$, 0 otherwise and $R=1$ if the elliptic curve with $j$-invariant $0$ is supersingular, 0 otherwise, so that
\begin{equation}
\label{massformula}
 S = S' + I + R \quad {\textrm{ and }} \quad  S' + \frac{I}{2} + \frac{R}{3} = \frac{p-1}{12}.
\end{equation}
from (\cite{SilvermanAEC}, Theorem $V$.4.1 $(c)$).

For every $s \in \Scal$, we call $\Ccal_s$ the path of length $e k_s$ between the points $Z$ and $Z'$ associated to $s \in \Scal$ in $\widetilde{G}$, due to the blowup of $s$ in  $\widetilde{X_0 (p)_R}$. In case $s$ is of $j$-invariant 1728 (resp. 0), we also call it $\Ecal$ (resp. $\Gcal$). We order the points of $\widetilde{G}$ (that is, the components of $\widetilde{X_0(p)_R} \times \overline{k}$) on every one of these paths in the following way : we call $C_{s,0} = Z'$, $C_{s,1}=C_s$ the unique point of $\Ccal_s$ linked to $Z'$, $C_{s,2}$ the point of $\Ccal_s$ linked to $C_s$ not yet named, and so on until  $C_{s,ek_s} = Z$. If $s$ is of $j$-invariant $1728$ (resp. 0), we call $E=\Ccal_{s,1}$ (resp. $G = \Ccal_{s,1}$) to remain consistent with notations of the Appendix of \cite{Mazur77} (where $F= \Ccal_{s,2}$ with $j(s) = 0$).

We use for every irreducible component $C$ the notation
\[
 \overline{C} = [C] - [Z']
\]
as $Z'$ is the component where reduces the cusp $\infty$, our choice of base point for the Albanese morphism. The following lemma simplifies the presentation of $\Phi$.

\begin{lem}
 Let $\Ccal_s$ be the path of length $e k_s$ between $Z$ and $Z'$ associated to $s$ in $\widetilde{G}$. In the group $\Phi$, for every $i \in [|0,e k_s|]$, 
\[
 \overline{C_{s,i}} = i \overline{C_{s,1}} = i \overline{C_s}
\]
In particular, 
\[
 \overline{Z} = e k_s \overline{C_s}.
\]

\end{lem}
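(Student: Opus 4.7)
The plan is to use the explicit description of $\Phi$ via Theorem 9.6.1 of \cite{BoschRaynaud}: namely, the component group is computed from the free abelian group on the irreducible components of $(\widetilde{X_0(p)_R})_{\overline{k}}$ (the vertices of $\widetilde{G}$) modulo the relations given by the intersection pairing. Concretely, for each irreducible component $D$, one has the relation
\[
\sum_{D'} (D \cdot D')\, \overline{D'} = 0 \quad \text{in } \Phi,
\]
the sum being taken over all components. Passing to the classes $\overline{D} = [D] - [Z']$ absorbs the one linear redundancy (the whole fiber is principal), so these relations are what we may freely use.

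Next I would specialise this to the interior vertices of the chain $\Ccal_s$. For $1 \le i \le ek_s - 1$, the component $C_{s,i}$ comes from the blow-up of the nonregular point $s$, so it is a projective line with self-intersection $-2$ that meets the rest of $\widetilde{G}$ only along $C_{s,i-1}$ and $C_{s,i+1}$ (transversally, intersection $1$ each). The associated relation is therefore
\[
\overline{C_{s,i-1}} - 2\,\overline{C_{s,i}} + \overline{C_{s,i+1}} = 0,
\]
equivalently $\overline{C_{s,i+1}} - \overline{C_{s,i}} = \overline{C_{s,i}} - \overline{C_{s,i-1}}$.

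The rest is a one-line induction on $i$. Since $\overline{C_{s,0}} = \overline{Z'} = 0$ by definition and $\overline{C_{s,1}} = \overline{C_s}$, the common value of the successive differences is $\overline{C_s}$, hence $\overline{C_{s,i}} = i\,\overline{C_s}$ for every $i$ for which the relation has been applied. The relation at $i = ek_s - 1$ (which is still an interior vertex) propagates the formula up to the endpoint $C_{s,ek_s} = Z$, giving $\overline{Z} = ek_s\,\overline{C_s}$.

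I do not expect any real obstacle here: the only point to be careful about is that the two endpoints $Z$ and $Z'$ of the chain are not $(-2)$-curves (they meet many chains $\Ccal_{s'}$, one for each supersingular point), so their own relations are more intricate; but those relations are not needed, since the induction only invokes the relations at the $ek_s - 1$ interior vertices, which are genuine $(-2)$-curves of the blow-up and have the clean form written above.
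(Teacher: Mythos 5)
Your proof is correct and follows essentially the same route as the paper: write out the laplacian relation $\overline{C_{s,i-1}} - 2\overline{C_{s,i}} + \overline{C_{s,i+1}} = 0$ at each interior vertex $C_{s,i}$ of the chain, start from $\overline{C_{s,0}} = \overline{Z'} = 0$ and $\overline{C_{s,1}} = \overline{C_s}$, and induct. Your remark that only the relations at the interior $(-2)$-curves are invoked (so the more complicated relations at $Z$ and $Z'$ never enter) is a helpful clarification that the paper leaves implicit.
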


\begin{proof}
 It is true by definition for $i=0$ and 1. For every $i \in [|1,m-1|]$, the relation given by the laplacian operator on $\widetilde{G}$ at $C_{s,i}$ is
\[
  - 2 \overline{C_{s,i}} + \overline{C_{s,i-1}} + \overline{C_{s,i+1}} = 0.
\]
The lemma follows by double induction on $i$.
\end{proof}

The group $\Phi$ is therefore the abelian group generated by $\overline{Z}$ and the $\overline{C_s}$, $s \in \Scal$, and the relations
\[
\begin{array}{rclr}
 - S \overline{Z} + (2e-1) I \overline{E} +  (3 e-1) R \overline{G} + \sum_{s \in \Scal'} (e-1) \overline{C_s} & = & 0& \qquad (Z) \\
 I \overline{E} + R \overline{G} + \sum_{s \in \Scal'} \overline{C_s} & = & 0 & \qquad (Z') \\
 \forall s \in \Scal', \qquad  \overline{Z} &= & e \overline{C_s} ,& \qquad (C_s)
\end{array}
\]
with additional relations 
 \[
\begin{array}{rclr}
 \overline{Z} & = & 2e \overline{E}& \qquad (E)\\
 \overline{Z} & = & 3 e \overline{G} & \qquad (G)
\end{array}
\]
whenever $E$ or $G$ exist (the name of these relations corresponding to the point where the laplacian operator is applied). Adding to $(Z)$ the relation $-(e-1) (Z')$, we get a new relation
\[
 - S \overline{Z} + e I \overline{E} + 2 e R \overline{G} = 0 \qquad (Z).
\]
that we use in replacement of the previous relation $(Z)$. Let us now give the description of the components group $\Phi$.
\begin{prop}
\label{groupecomposantescasgeneral}
 Let $p=11$ or $p>13$ be a prime number and $n=\operatorname{num}  \left( \frac{p-1}{12} \right)$. 
With previous notations : 

$(a)$ The group of irreducible components $\Phi$ of $J_0 (p)_R \times {\overline{k}}$ is non-canonically of the form
\[
 \Phi \cong (\Z / n e \Z) \times (\Z / e \Z) ^{S-2}.
\]

$(b)$ The cuspidal subgroup $C$ of $J_0 (p) (\Q)$ reduces injectively in $\Phi$, with $\overline{Z}$ being the reduction of $\cl ([0]-[\infty])$. Hence, $\langle \overline{Z} \rangle$ is of order $n$ and we identify it with $\Z / n \Z$ in the following.

$(c)$ There is an exact sequence of $\Z/ e \Z$-modules  
\[
\xymatrix{0 \ar[r] & \Z/ e \Z \ar[r] ^-\Delta & \left( \Z/ e \Z \right) ^S \ar[r]^-\alpha & \Phi / \langle \overline{Z} \rangle \ar[r] & 0}
\]
with $\Delta :  \lambda \mt \lambda \cdot \sum_{s \in S} [C_s]$ and $\alpha :  \sum_{s \in S} \lambda_s [C_s] \mt \sum_{s \in S} \lambda_s \overline{C_s}$.

In particular, $e \cdot \Phi =\langle \overline{Z} \rangle$, and we get  
\[
\begin{array}{rcl}
\forall s \in \Scal',\forall i \in [|1,e-1|], e. \overline{C_{s,i}} & = & i \\
\forall i \in [| 1,2e-1 |], e . \overline{E_i} & = & i/2 \qquad {\rm{ if }} \, p= -1 \mod 4\\
\forall i \in [|1,3 e -1 |], e . \overline{G_i} & = & i/3 \qquad {\rm{ if }} \, p= -1 \mod 3\\
\sum_{s \in \Scal} \overline{C_s} & = & 0 \qquad {\rm{ in }} \Phi
\end{array}
\]

\end{prop}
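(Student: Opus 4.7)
The approach is to work directly with the presentation of $\Phi$ derived above, obtained from Theorem 9.6.1 of \cite{BoschRaynaud} applied to the dual graph $\widetilde{G}$ of $(\widetilde{X_0(p)_R})_{\overline{k}}$ and simplified via the preceding lemma. Thus $\Phi$ is generated by $\overline{Z}$ and the $\overline{C_s}$ ($s \in \Scal$), subject to the chain relations $e\overline{C_s} = \overline{Z}$ for $s \in \Scal'$, $2e\overline{E} = \overline{Z}$, $3e\overline{G} = \overline{Z}$, together with $(Z')$ and the reduced $(Z)$ relation $-S\overline{Z} + eI\overline{E} + 2eR\overline{G} = 0$.

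For (a), I pick $s_0 \in \Scal'$ (nonempty for $p \geq 11$ by the mass formula), eliminate $\overline{Z} = e\overline{C_{s_0}}$, and change variables to $b := \overline{C_{s_0}}$ and $a_s := \overline{C_s} - b$ for $s \in \Scal \setminus \{s_0\}$. The chain relations for $s \in \Scal'$ then become $ea_s = 0$; the relations $2e\overline{E} = \overline{Z}$ and $3e\overline{G} = \overline{Z}$ become $2ea_{s_E} + eb = 0$ and $3ea_{s_G} + 2eb = 0$, respectively; and $(Z')$ becomes $Sb + \sum_{s \neq s_0} a_s = 0$, which can be used to eliminate one further $a_{s_1}$. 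Treating separately the four cases $(I,R) \in \{0,1\}^2$, the reduced $(Z)$ relation combined with the above yields in each case the single new relation $neb = 0$. For instance in the case $I=R=1$, multiplying $(Z)$ by $6$ and substituting $6e\overline{E} = 3\overline{Z}$ and $6e\overline{G} = 2\overline{Z}$ produces $(6S-7)\overline{Z} = 0$, and the mass formula gives $6S - 7 = (p-1)/2 = n$. A dimension count then confirms these are all the relations, yielding $\Phi \cong (\Z/ne\Z) \times (\Z/e\Z)^{S-2}$.

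Part (b) is then immediate: the Albanese extension $\phi_R : \widetilde{X_0(p)_R}^{\rm{smooth}} \to J_0(p)_R$ sends the cuspidal sections $[0]$ and $[\infty]$ into the components $Z$ and $Z'$ of the special fiber, so $\cl([0]-[\infty]) \in C$ reduces to $[Z]-[Z'] = \overline{Z}$ in $\Phi$; since (a) shows that $\overline{Z}$ has order $n = |C|$, the reduction $C \to \langle \overline{Z}\rangle$ is an isomorphism. For (c), quotienting by $\langle \overline{Z}\rangle$ collapses the chain relations to $e\overline{C_s} = 0$ for all $s \in \Scal$ and leaves only the unique relation $\sum_s \overline{C_s} = 0$, giving the claimed exact sequence. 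The equality $e \cdot \Phi = \langle \overline{Z}\rangle$ follows, and the numerical identities $e\overline{C_{s,i}} = i$, $e\overline{E_i} = i/2$, $e\overline{G_i} = i/3$ (in the identification $\langle \overline{Z}\rangle \cong \Z/n\Z$) are obtained by applying the preceding lemma $\overline{C_{s,i}} = i \overline{C_s}$ and using the chain relations, noting that $2$ is invertible modulo $n$ when $I=1$ (forcing $n$ odd, since $p \equiv -1 \pmod{4}$) and $3$ is invertible modulo $n$ when $R=1$.

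The main obstacle lies in the case-by-case verification in the proof of (a): one must check across all four values of $(I,R)$ that the combination of $(Z)$, $(Z')$, and the chain relations collapses to exactly the single relation $neb = 0$ with no unintended collapse among the $a_s$, and the fact that the mass formula produces the correct value $n$ (rather than a proper divisor) in each case is essential.
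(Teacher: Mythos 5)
Your approach is close in spirit to the paper's — both work with the presentation of $\Phi$ coming from Theorem 9.6.1 of \cite{BoschRaynaud} and perform a change of variables to diagonalize the relations — but you pick a different pivot and this creates a genuine gap. You eliminate $\overline{Z}$ via $\overline{Z}=e\overline{C_{s_0}}$ with $s_0\in\Scal'$, asserting that $\Scal'$ is nonempty for $p\geq 11$ by the mass formula. This is false for $p=11$: there $I=R=1$ and $(p-1)/12=5/6=I/2+R/3$, so $S'=0$ and $\Scal'=\emptyset$. Since $p=11$ is explicitly included in the proposition (it is the one prime of the form $11\bmod 12$ for which $\Scal'$ is empty), your pivot does not exist and the computation breaks down there, including in the very example case $I=R=1$ that you work out. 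The paper avoids this by choosing $\overline{E}$ (which exists whenever $I=1$, hence for all $p\equiv 11\pmod{12}$) as the pivot, setting $\overline{Z}=2e\overline{E}$, $\overline{C_s'}=\overline{C_s}-2\overline{E}$ and $\overline{G'}=\overline{G}+(2S-3)\overline{E}$, which cleanly handles $p=11$ (where the result $\Phi\cong\Z/5e\Z$ comes out of $\overline{G'}=0$ and $5e\overline{E}=0$). To repair your argument you would either have to treat $p=11$ separately, or pick the pivot depending on the congruence class of $p$ (as the paper does), choosing $\overline{E}$ when $I=1$, $\overline{G}$ when $I=0,\,R=1$, and $s_0\in\Scal'$ only when $I=R=0$ (where $S'\geq 3$).

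A secondary, smaller issue: in your deduction of $(c)$ you assert that passing to $\Phi/\langle\overline{Z}\rangle$ "collapses the chain relations to $e\overline{C_s}=0$ for all $s\in\Scal$," but for $s=s_E$ the chain relation only gives $2e\overline{E}=\overline{Z}$, and $e\overline{E}\in\langle\overline{Z}\rangle$ requires $2$ to be invertible mod $n$ (similarly $3$ for $s_G$). You do note the relevant invertibility later for the numerical identities, but it is already needed at the point where you claim $\alpha$ is well-defined on $(\Z/e\Z)^S$; it should be invoked there rather than only afterwards.
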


\begin{proof}
We only compute $\Phi$ in the case $p=11 \! \mod 12$ so that $I=R=1$, the other cases being simpler but similar.
We replace $\overline{Z}$ by $2 e \overline{E}$ thanks to relation $(E)$, and for every $s \in \Scal'$ we make the variable changes
\[
 \overline{C_s '} : = \overline{C_s} - 2 \overline{E} \quad {\textrm{and}} \quad \overline{G'} : = \overline{G} + (2 S - 3) \overline{E}. 
\]
The relations become
\[
 \begin{array}{rclr}
  e (6S-7) \overline{E} + 2 e \overline{G'} & = & 0 &\qquad (Z)' \\
  \overline{G'}  + \sum_{s \in \Scal'} \overline{C_s'} & = & 0 &\qquad (Z') ' \\
 \forall s \in \Scal', e . \overline{C_s '} & = & 0 & \qquad (C_s) '
 \end{array}
\]
As $p= 11 \mod 12$, $6S-7= 6 S' + 5= (p-1)/2=n$ by the mass formula of \eqref{massformula}, and $(Z)'$ is equivalent to $e n \overline{E}=0$ with help of the other relations.
Hence, $\overline{G'}$ is generated by the other generators, and the relations $(C_s) ', s \in \Scal'$ and $(Z)'$ are diagonal, which gives us the announced isomorphism of $(a)$. Via this isomorphism, we have
\begin{eqnarray*}
 \overline{Z} & = & (2e,0, \cdots, 0) \\
 \overline{E_i} & = & (i, 0 \cdots , 0) \\
 \overline{G_i} & = & (-(2 S' + 1) i, -i, \cdots, -i) \\
\overline{C_{s,i}} & = & (2i, 0 ,\cdots,0, i,0 , \cdots, 0) \quad (s \in \Scal') \\
\end{eqnarray*}
and this readily gives us $(b)$ and $(c)$.
\end{proof}

\section{Application of Mazur's method}
\subsection{Borel case}
\label{Borel case}
Let $p$ be a fixed prime and $d$ be a squarefree positive integer prime to $p$. The scheme $X_0 (dp)_\Z$ is smooth outside its supersingular points in characteristic dividing $dp$, in particular every cusp reduces in the smooth part modulo every prime number $\ell$.
If $r$ is the number of prime factors of $d$, $X_0 (dp)_\Q$ has $2 ^{r+1}$ cusps on which the Atkin-Lehner group acts transitively.
As $d$ and $p$ are coprime, the cusps of $X_0 (dp)_\Q$ are in bijection via forgetful functors $X_0 (dp)_\Q \ra X_0 (d)_\Q$ and $X_0 (dp)_\Q \ra X_0 (p) _\Q$ with couples of cusps of $X_0 (d)_\Q$ and $X_0 (p)_\Q$. Via this correspondance, the Atkin-Lehner involutions $w_{d'}, d' |d$ on $X_0 (dp)_\Q$ leave unchanged the component of the cusp in $X_0 (p)_\Q$ and $w_p$ leaves unchanged the component in $X_0 (d)_\Q$. 
We call $\infty^{dp}$ (resp. $\infty ^d$, $\infty^p$) the usual infinity cusp of $X_0 (dp)_\Q$ (resp. $X_0(d)_\Q$, $X_0 (p)_\Q$).

\begin{defi}
 We denote by $\pi_{dp,p} : X_0 (dp)_\Q \ra X_0 (p)_\Q$ the ``forgetting $d$-structure'' morphism and by $\phi : X_0 (p)_\Q \ra J_0 (p)_\Q$ the Albanese morphism sending $\infty^p$ to 0. The morphism ${g : X_0 (dp)_\Q \ra J_0 (p)_\Q}$ is defined by $
 g:= \phi \circ \pi_{dp,p} + \phi \circ \pi_{dp,p} \circ w_d$.
Functorially, we have
\[
 g(E,C_d,C_p) = \cl ([E,C_p] + [E/C_d,C_p/C_d] - 2 [\infty^p]). 
\]
For every $t \in \T$, we note $g_t = t \circ g$.
\end{defi}

\begin{prop}
\label{formalimmersionBorel}
 Let $p=11$ or $p>13$ be a prime number, and $\ell$ be a prime number (possibly equal to $p$).
For every $t \in \T$, the morphism $(g_t)_\Z : X_0 (dp) ^{\rm{smooth}} _\Z \ra J_0 (p)_\Z$ extending $g_t$ by Néron mapping property is a formal immersion at $\infty_{\F_\ell} ^{dp}$ if and only if $t \notin \ell \T$.
\end{prop}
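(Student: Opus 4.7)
The plan is to reduce the formal immersion assertion to the non-vanishing (modulo $\ell$) of the induced cotangent map, identify that cotangent space with cusp forms via the $q$-expansion principle, and then conclude with the perfect Hecke--cusp form pairing.

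First, since $\infty^{dp}$ lies in the smooth locus of $X_0(dp)_\Z$ in every characteristic, being a formal immersion at $\infty^{dp}_{\F_\ell}$ is equivalent to the map on cotangent spaces $\Cot_0 (J_0(p)_{\F_\ell}) \to \Cot_{\infty^{dp}_{\F_\ell}} (X_0(dp)_{\F_\ell})$ being surjective, and since the target is one-dimensional, this is equivalent to non-vanishing. By the $q$-expansion principle applied to the Albanese map, the $\Z$-module $\Cot_0 (J_0(p)_\Z)$ is naturally identified with $S_2(\G_0(p), \Z)$, where $f = \sum a_n q^n$ corresponds to the differential $\omega_f$ whose local expansion at $\infty^p$ is (up to a fixed unit) $\sum a_n q^{n-1} dq$, and the functional ``leading coefficient at $\infty^p$'' is the linear form $\omega_f \mapsto a_1(f)$.

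Second, I would compute $g^* \omega_f$ at $\infty^{dp}$. The morphism $\pi_{dp,p}$ is étale above $\infty^p$ at $\infty^{dp}$ (both cusps have width one, and $q_p$ pulls back to $q_{dp}$ because the map corresponds to $z \mapsto z$ on $\mathbb{H}$), so $\pi_{dp,p}^* \omega_f$ has the same $q$-expansion $\sum a_n q^{n-1} dq$. The composition $\pi_{dp,p} \circ w_d$ is the second degeneracy morphism $(E, C_d, C_p) \mapsto (E/C_d, (C_p+C_d)/C_d)$, which corresponds to $z \mapsto dz$ on $\mathbb{H}$; hence $q_p$ pulls back to $q_{dp}^d$, and $(\pi_{dp,p} \circ w_d)^* \omega_f = d \sum a_n q^{nd-1} dq$, vanishing to order $d-1 \geq 1$ at $\infty^{dp}$ (the case $d=1$ being excluded or requiring a separate observation, since it is the unmodified Albanese setting already handled by Proposition \ref{immformelleAlbanese}). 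Consequently, the coefficient of $dq$ in $g^*\omega_f$ is just $a_1(f)$, and precomposing with the Hecke operator $t$ on $\Cot_0 J_0(p) = S_2(\G_0(p), \Z)$ gives that the cotangent map of $g_t$ sends $\omega_f$ to $a_1(tf)$.

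Finally, the classical perfect pairing $\T \times S_2(\G_0(p), \Z) \to \Z$, $(T, f) \mapsto a_1(Tf)$ induces an isomorphism $\T \cong \Hom_\Z(S_2(\G_0(p), \Z), \Z)$; tensoring with $\F_\ell$ (both sides being free $\Z$-modules of the same finite rank) preserves perfectness, so the linear form $f \mapsto a_1(tf) \bmod \ell$ on $S_2(\G_0(p), \F_\ell)$ is nonzero precisely when $t \notin \ell \T$. Combined with the first paragraph, this gives the claimed equivalence. The main technical point is the computation of the second degeneracy pullback and, in particular, the observation that for $d \geq 2$ it only affects strictly higher-order terms at $\infty^{dp}$, so the leading-coefficient functional reduces cleanly to $a_1(tf)$ and the proof terminates via the Hecke pairing.
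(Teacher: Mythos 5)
Your proof is correct and takes essentially the same route as the paper: you reduce to the cotangent map, observe that the pullback via $\pi_{dp,p}\circ w_d$ contributes only higher-order terms at $\infty^{dp}$ (the paper phrases this as $\pi_{dp,p}$ being ramified at $w_d(\infty^{dp})$, you compute the $q$-expansion directly), and conclude via the perfect Hecke pairing $\T\times S_2(\Gamma_0(p),\Z)\to\Z$, which is exactly the content of Proposition \ref{immformelleAlbanese} that the paper invokes as a black box. Your parenthetical remark excluding $d=1$ is appropriate (the paper's ramification argument likewise breaks for $d=1$, but that case is vacuous in the applications).
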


\begin{proof}
 Fix $t \in \T$ and $g'_t = t \circ \phi \circ \pi$ so that $g_t = g'_t + g'_t \circ w_d$. As $w_d$ permutes cusps of $X_0 (dp)_\Q$ above $\infty^p \in X_0 (p)$, $g'_t ( \infty ^{dp} ) = g'_t \circ w_d ( \infty^{dp}) = 0$. The residual fields of the points $\infty^{dp}_\Fl$ and $0_\Fl$ are both $\Fl$ as $\infty^{dp}$ and $0 \in J_0 (p)$ cusps are $\Q$-rational, so we only have to check the induced maps on cotangent spaces. Notice $w_d ( \infty ^{dp})$ is a cusp of $X_0 (dp)_\Q$ not above $\infty ^d$, therefore $({\pi_{dp,p}})_ \C : X_0 (dp)_\C \ra X_0 (p)_\C$ is ramified at this cusp (because $X_0(d)_\C \ra X(1)_\C$ is ramified at any cusp but $\infty ^d$). Hence the cotangent map of $g'_t \circ w_d$ is zero at section $\infty ^{dp}_\Z$ and the cotangent map of $g_t$ at $\infty ^{dp}_\Z$ is the cotangent map of $g'_t$. Finally the cotangent map of $({\pi_{dp,p}})_\Z$ at section $\infty_\Z ^{dp}$ is an isomorphism so the cotangent map of $g'_t$ at $\infty^{dp}_\Fl$ is surjective if and only if $t \notin \ell \
T$ by Proposition \ref{immformelleAlbanese}, which concludes the proof by the usual formal immersion criterion.
\end{proof}

The following lemma is essential for the case $\ell=2$.
\begin{lem}
\label{lemcalculgP}
 Let $p=11$ or $p>13$ be a prime number, $n = \num ((p-1)/12)$ and $K$ be a quadratic field with ramification degree $e$ over $p$.
Let $E$ be a central $\Q$-curve of degree $d$ defined over $K$ such that $\Prep$ is reducible, and $P$ the corresponding point of $X_0 (dp) (K)$.
Then $g(P)$ belongs to $J_0 (p) (\Q)$, and with the retraction $\rho : J_0(p)(\Q) \ra \Z/n\Z $defined at Proposition \ref{Mazure1}, the possible values of $\rho ( g (P))$ are the following : 

\begin{tabular}{|c|c|c|c|c|}
\hline
& $p=1 \mod 12$ & $p=5 \mod 12$ & $p=7 \mod 12$ & $p=11 \mod 12$ \\
\hline
$e=1$  & 0,2 & 0,1,2,$\frac{2}{3},\frac{4}{3}$  & 0,1,2  & 0,1,2,$\frac{2}{3},\frac{4}{3}$\\
\hline
$e=2$ &0,1,2 & 0,1,2,$\frac{1}{3},\frac{2}{3},\frac{4}{3},\frac{5}{3}$ & 0,1,2,$\frac{1}{2},\frac{3}{2}$ & 0,1,2,$\frac{1}{3},\frac{1}{2},\frac{2}{3},\frac{4}{3},\frac{3}{2},\frac{5}{3}$\\
\hline
\end{tabular}

In particular, this image cannot be the unique 2-torsion point of the cuspidal subgroup $C$ unless $p = 17$ or $41$.
\end{lem}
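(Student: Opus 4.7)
The plan is to handle the two assertions in turn.

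\textbf{Rationality.} First I would establish that $g(P) \in J_0(p)(\Q)$. Since $E$ is a central $\Q$-curve of degree $d$ defined over the quadratic field $K$ with nontrivial automorphism $\sigma$, subsection~\ref{rappelsrepproj} gives the defining relation $P^\sigma = w_d \cdot P$ in $X_0(dp)(K)$. The key observation is that $g = \phi \circ \pi_{dp,p} + \phi \circ \pi_{dp,p} \circ w_d$ is invariant under $w_d$ (because $w_d^2 = \Id$) and is defined over $\Q$; hence $g(P)^\sigma = g(P^\sigma) = g(w_d \cdot P) = g(P)$, forcing $g(P) \in J_0(p)(K)^{\Gal(K/\Q)} = J_0(p)(\Q)$.

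\textbf{Setting up the reduction.} To compute $\rho(g(P)) \in \Z/n\Z$, I would fix a prime $\lambda$ of $\ok$ above $p$ with ramification index $e \in \{1,2\}$, set $R = \okl$, and work inside the component group $\Phi_R$ of $J_0(p)_R \otimes \overline{k}$ as described by Proposition~\ref{groupecomposantescasgeneral}. Because $g(P) \in J_0(p)(\Q)$, its image in $\Phi_R$ lies in the cyclic subgroup $\langle \overline{Z} \rangle \cong \Z/n\Z$ (the image of the cuspidal subgroup), and the coefficient against $\overline{Z}$ reads off $\rho(g(P))$. Concretely, $g(P) = \cl([E,C_p] - [\infty^p]) + \cl([E/C_d, C_p/C_d] - [\infty^p])$ reduces to $\overline{C_1} + \overline{C_2}$ in $\Phi_R$, where $C_i$ is the irreducible component hit by each modular point in $\widetilde{X_0(p)_R} \otimes \overline{k}$.

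\textbf{Case analysis.} The bulk of the work is enumerating the possible pairs $(C_1, C_2)$ according to the reduction type of $E$ modulo $\lambda$. For potentially ordinary or multiplicative reduction, each point lands on $Z$ or $Z'$ depending on whether its $p$-isogeny is connected or étale mod $\lambda$; since $(d,p)=1$ gives $C_p \cong C_p/C_d$ as group schemes, both points land on the same smooth component, contributing $\rho \in \{0, 2\}$. In the potentially supersingular case with $j(E) \not\equiv 0, 1728 \pmod{\lambda}$ (so $k_s = 1$), the possible intermediate components $C_{s,i}$ satisfy $e \cdot \overline{C_{s,i}} = i \in \Z/n\Z$ by Proposition~\ref{groupecomposantescasgeneral}(c), giving $\rho \in \{0, 1, 2\}$. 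The exceptional cases $j = 0$ (with $k_s = 3$, only if $p \equiv -1 \pmod{3}$) and $j = 1728$ (with $k_s = 2$, only if $p \equiv -1 \pmod{4}$) produce the fractional entries $i/3$ and $i/2$, and summing over the compatible possibilities for the second modular point yields the remaining table values.

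\textbf{The 2-torsion dichotomy.} Finally, to exclude the 2-torsion point of $C \cong \Z/n\Z$, I would observe that $n/2$ only makes sense when $n$ is even, which fails for $p \equiv 7, 11 \pmod{12}$. A direct congruence check of each tabulated value modulo $n$ in the remaining cases $p \equiv 1, 5 \pmod{12}$ singles out $p = 17$ (where $n = 4$ and the integer $2$ itself equals $n/2$) and $p = 41$ (where $n = 10$ and $5/3 \equiv 5 = n/2 \pmod{10}$). The main technical obstacle I anticipate is the enumeration above for $e = 2$ combined with supersingular exceptional reduction, where compatibility with the central $\Q$-curve condition $P^\sigma = w_d P$ --- in particular the constraint that for ramified $\lambda$, $\sigma$ acts trivially on the residue field, forcing $(E \bmod \lambda)$ and $(E^\sigma \bmod \lambda)$ to share their $j$-invariant --- is needed to eliminate mixed combinations of exceptional reductions that would otherwise produce values outside the table.
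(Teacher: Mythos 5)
Your proposal follows the paper's route closely: the rationality of $g(P)$ via $P^\sigma = w_d \cdot P$, the reduction modulo a prime above $p$ and computation in the component group $\Phi$ of Proposition~\ref{groupecomposantescasgeneral}, and the final congruence check isolating $p = 17, 41$ are all exactly what the paper does. The case analysis for ordinary/multiplicative reduction (both modular points on the same component since $C_p \cong C_p/C_d$ as group schemes) and for supersingular reduction with extra automorphisms (producing the fractional values $i/2$, $i/3$) also matches.

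The one place where your argument genuinely differs from the paper is the mechanism you invoke to keep the two points $\pi_{dp,p}(P)$ and $\pi_{dp,p}(w_d\cdot P)$ from landing on ``mismatched'' components. The paper's observation is purely on the isogeny side: these two points represent elliptic curves isogenous of degree prime to $p$, hence have the same stable reduction type (ordinary-separable, ordinary-inseparable, or supersingular) modulo the chosen prime above $p$. You instead appeal to the Galois action on the residue field in the ramified case, arguing that $\sigma$ acting trivially forces $E \bmod \lambda$ and $E^\sigma \bmod \lambda$ to share their $j$-invariant. Your observation is correct and in fact slightly stronger than the paper's for $e = 2$ (it pins down equality of $j$-invariants, not just equality of reduction type), but it is specific to the ramified case: for $\lambda$ unramified your argument says nothing, and the paper's ``same reduction type'' claim does not by itself preclude the two points reducing to supersingular points with different exceptional $j$-invariants (one at $j=0$, the other at $j=1728$, linked by a prime-to-$p$ isogeny over $\overline{\F_p}$). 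The paper's enumeration implicitly assumes this does not happen, and your proposal inherits the same implicit assumption in the unramified case while resolving it more satisfactorily in the ramified one. Since neither potential mixed value $1/2 + 1/3$ nor $1/2 + 2/3$ can equal $n/2$ for any admissible $p$, the ``in particular'' conclusion of the lemma is unaffected in either treatment. On balance your proposal is a faithful (and in one respect more careful) reconstruction of the paper's argument.
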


\begin{proof}
 First, $g(P) \in J_0 (p) (K)$ because $g$ is $\Q$-rational, and for $\s$ the nontrivial automorphism of $K$, 
\[
 g(P) ^\s = \phi \circ \pi_{dp,p} ( P ^\s ) + \phi \circ \pi_{dp,p} \circ w_d  (P ^\s) = \phi \circ \pi_{dp,p} (w_d \cdot P  ) + \phi \circ \pi_{dp,p} (P) = g(P)
\]
by construction of $P$ (see subsection \ref{rappelsrepproj}), hence $g(P)$ indeed belongs to $J_0 (p) (\Q)$. Fix a prime ideal $\gP$ of $K$ above $p$ and define $\Phi$ the group of components of the special fiber of $J_0(p)_{{\ok}_\gP}$. We read $\rho (g(P))$ via reduction modulo $\gP$, using Propositions \ref{Mazure1} and \ref{groupecomposantescasgeneral} (and their notations). Notice that $\pi_{dp,p} (P)$ and $\pi_{dp,p} (w_d \cdot P)$ represent  elliptic curves which are isogenous of degree prime to $p$ on $X_0 (p)$. Consequently, their stable reduction type modulo $\gP$ (ordinary with separable isogeny, ordinary with unseparable isogeny, supersingular) is the same. 
This remark done, we use the Proposition \ref{groupecomposantescasgeneral} to compute the different possibilities : 

$\bullet$ If $p$ is unramified in $\ok$, we are in the étale case of the Proposition, and the possible values of the reduction of $g(P)$ in $\Phi$ are then $2 \overline{Z'} =0$, $2 \overline{Z} =2$, $2 \overline{E} = 1$, $2 \overline{G} = 2/3$, $4 \overline{G} = 4/3$ and $3 \overline{G} = 1$ (when $\overline{E}$ or $\overline{G}$ exist). This proves the first line.

$\bullet$ If $p$ is ramified in $\ok$, we are in case $e=2$ of the Proposition, and the possible values of the reduction of $g(P)$ in $\Phi$ are then $2 \overline{Z'} =0$, $2 \overline{Z} =2$, $2 \overline{E_1} = 1/2$, $2 \overline{E_2}=1$, $2 \overline{E_3}=3/2$, $\overline{E_1} + \overline{E_3} = 1$, $2 \overline{G_1} = 1/3$, $2 \overline{G_2}=2/3, \dots, 2 \overline{G_5} = 5/3$ (and other possibilities that do not give new values). Finally, for every $s \in \Scal'$, $2 \overline{C_{s,1}} = 1$.

For the application of the result, $\Z / n \Z$ admits a nontrivial 2-torsion point if and only if $2$ divides $n$, which is possible only when $p = 1 \mod 8$, and then this torsion point is $n/2$. Suppose $\rho (g(P))$ is equal to $n/2$. From the table, if $p=1 \mod 12$ , it means $(p-1)/12$ divides 1,2 or 4, whence $p=13$. If $p = 5 \mod 12$, it means $(p-1)/4$ divides $1,2,4,8$ or 10, whence $p=17$ or $p=41$.
\end{proof}

\begin{prop}
\label{bonnereductionpartoutBorel}
 Let $K$ be a quadratic field and $E$ a $\Q$-curve of squarefree degree $d$ defined over $K$. If for $p \geq 11$, $p \neq 13,17,41$ prime not dividing $d$, the representation $\Prep$ is reducible, $E$ has potentially good reduction at every prime ideal of $\ok$. 
\end{prop}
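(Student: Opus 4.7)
The strategy is Mazur's method. Assume for contradiction that $E$ has potentially multiplicative reduction at some prime $\l$ of $\ok$ lying above a rational prime $\ell$. Since $\Prep$ is reducible, there is a Galois-stable $\F_p$-line $C_p \subset E_p$, and together with the kernel $C_d$ of a minimal isogeny $E \ra E^\s$ (where $\s$ denotes the nontrivial $K/\Q$-automorphism) this produces a $K$-point $P = (E, C_d, C_p) \in X_0(dp)(K)$ satisfying $P^\s = w_d \cdot P$ (subsection \ref{rappelsrepproj}). Because $E$ acquires multiplicative reduction after a finite extension, $P$ reduces modulo $\l$ to one of the $2^{r+1}$ cusps of $X_0(dp)$, where $r$ is the number of prime divisors of $d$.

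The full Atkin--Lehner group $W \cong (\Z/2\Z)^{r+1}$ acts transitively on these cusps and its elements all commute with $w_d$, so one may choose $w \in W$ such that $P' := w \cdot P$ reduces modulo $\l$ to $\infty^{dp}$. This $K$-rational point still satisfies $(P')^\s = w \cdot P^\s = w w_d \cdot P = w_d \cdot P'$, which forces $g(P')$ to lie in $J_0(p)(\Q) = C$, the cyclic cuspidal subgroup of order $n = \num((p-1)/12)$ (Proposition \ref{propquotEisenstein}). Note also that $g(\infty^{dp}) = 0$: the involution $w_d$ preserves the $X_0(p)$-component of cusps, so both $\pi_{dp,p}(\infty^{dp})$ and $\pi_{dp,p}(w_d \cdot \infty^{dp})$ equal $\infty^p$, which $\phi$ sends to $0$.

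By Lemma \ref{Nakayamadonneannulateurgammaical}, pick a Hecke operator $t \in \T \setminus \ell \T$ with $t \equiv 1 \pmod{\Ical}$. Then $t$ acts as the identity on $C$ (Proposition \ref{propquotEisenstein}(c)), so $g_t(P') = g(P')$ is $\Q$-rational torsion while $g_t(\infty^{dp}) = 0$. Proposition \ref{formalimmersionBorel} ensures that $g_t$ is a formal immersion at $\infty^{dp}_{\F_\ell}$, and cusps lie in the smooth locus $X_0(dp)_\Z^{\rm{smooth}}$, so Proposition \ref{superpropimmformelle} applies to $x = \infty^{dp}$ and $y = P'$. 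For $\ell > 2$ it forces $P' = \infty^{dp}$, contradicting the noncuspidality of $P'$, which finishes this case.

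The main obstacle is $\ell = 2$, where Proposition \ref{superpropimmformelle} leaves the further possibility that $g(P')$ is a $2$-torsion point of $C$ generating a $\mu_2$ subscheme of $J_0(p)_\Z$; this can occur only when $2 \mid n$ (i.e.\ $p \equiv 1 \pmod 8$), in which case it equals the element $n/2 \in C$. To exclude this we apply Lemma \ref{lemcalculgP} not to $P$ but to $P'$: its proof only uses the common potentially-ordinary/potentially-supersingular reduction type at primes of $\ok$ above $p$ of the two isogenous elliptic curves underlying $\pi_{dp,p}(P')$ and $\pi_{dp,p}(w_d \cdot P')$, so the same tabulation of possible values of $\rho(g(P'))$ still holds, and this table excludes the value $n/2$ precisely when $p \notin \{13, 17, 41\}$. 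This is exactly the hypothesis, and the contradiction is complete.
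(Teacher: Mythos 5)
Your proof follows the paper's overall Mazur-method strategy, but there is a genuine gap in the step where you pass from $\Q$-rationality of $g(P')$ to its being torsion. You write that $(P')^\s = w_d \cdot P'$ ``forces $g(P')$ to lie in $J_0(p)(\Q) = C$.'' The equality $J_0(p)(\Q) = C$ is false in general: Mazur's theorem (Proposition \ref{propquotEisenstein}$(a)$) identifies only the \emph{torsion} subgroup $J_0(p)(\Q)_{\rm tors}$ with the cuspidal group $C$; the Mordell--Weil group $J_0(p)(\Q)$ can well have positive rank (e.g.\ $p = 37$). What the symmetry $(P')^\s = w_d \cdot P'$ gives you is only that $g(P')$ is $\Q$-rational, not that it is torsion, so the hypothesis of Proposition \ref{superpropimmformelle} requiring the point to be torsion in $A(\Q)$ is not yet satisfied.

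This is precisely where the Eisenstein quotient must enter, and it explains why you only invoked two of the three conditions from Lemma \ref{Nakayamadonneannulateurgammaical}. The crucial omitted condition is $t \cdot \gamma_\Ical = 0$: with it, the morphism $t : J_0(p)_\Q \to J_0(p)_\Q$ kills $\gamma_\Ical \cdot J_0(p)$ and therefore factors through the Eisenstein quotient $\widetilde{J}(p)$, which \emph{is} of rank zero over $\Q$ (the whole raison d'\^etre of $\widetilde{J}(p)$). This is what makes $g_t(P')$, and not $g(P')$, a $\Q$-rational torsion point. Relatedly, your assertion $g_t(P') = g(P')$ does not hold without knowing $g(P') \in C$; the paper instead shows the weaker but sufficient statement $\rho(g_t(P')) = \rho(g(P'))$, using that $t \equiv 1 \pmod{\Ical}$ acts compatibly through the identification $\Phi \cong C \cong \Z/n\Z$ (Proposition \ref{propquotEisenstein}$(c)$), and then applies Lemma \ref{lemcalculgP} to rule out the $\mu_2$ alternative at $\ell = 2$. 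To repair your argument, replace ``$J_0(p)(\Q) = C$'' by ``$J_0(p)(\Q) \ni g(P')$,'' take $t \in \T \setminus \ell\T$ with $t \equiv 1 \pmod\Ical$ \emph{and} $t\gamma_\Ical = 0$, deduce that $g_t(P')$ is torsion via the rank-zero quotient $\widetilde{J}(p)$, and in the $\ell = 2$ step argue on $\rho$-values rather than via the unjustified identity $g_t(P') = g(P')$.
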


\begin{proof}
 Let $P \in X_0 (dp) (K)$ be the point associated to $E$. From Lemma \ref{lemcalculgP}, we know that $g(P) \in J_0 (p)(\Q)$.
Let $\lambda$ be a prime ideal of $\ok$ above $\ell$, and suppose $E$ has potentially multiplicative reduction at $\lambda$. As the Atkin-Lehner involutions group acts transitively and $\Q$-rationally on the cusps of $X_0 (dp)$, we can and will assume that $P_\lambda = \infty^{dp}_\lambda = \infty^{dp}_\ell \in X_0 (dp)_\Z ( \Fl)$. In particular, $P_\lambda$ is in the smooth part of $X_0 (dp)_\Z$.
We choose $t \in \T \backslash \ell \T$ cancelling the ideal $\gamma_\Ical$ and congruent to 1 modulo $\Ical$ (see Lemma \ref{Nakayamadonneannulateurgammaical} and above for notations). The $\Q$-rational morphism $t : J_0 (p) _\Q \ra J_0 (p)_\Q$ cancels on $\gamma_\Ical \cdot J_0 (p)$, therefore it factors through $\widetilde{J} (p)$ and sends the $\Q$-rational point $g(P)$ on a $\Q$-rational torsion point of $J_0 (p)$ because $\widetilde{J}(p)$ is of rank zero. It allows us to apply Proposition \ref{superpropimmformelle} to $x=P$, $y= \infty^{dp}$, $X= X_0 (dp)_\Q$, $A=J_0 (p)_\Q$ and $g_t$ (the latter being a formal immersion by Proposition \ref{formalimmersionBorel}). In the case $\ell>2$, we obtain $P = \infty ^{dp}$, which is a contradiction, therefore $E$ has potentially good reduction modulo $\lambda$.
In the case $\ell=2$, as $t= 1 \! \mod \Ical$, $\rho ( g_t (P)) = \rho (g (P))$ (Proposition \ref{propquotEisenstein} $(c)$), therefore $g_t (P)$ is not the non-trivial 2-torsion point of $J_0 (p) (\Q)$ by Lemma \ref{lemcalculgP}. So $P= \infty^{dp}$ in this case too, which is a contradiction, hence $E$ has potentially good reduction modulo $\lambda$ for every prime ideal $\lambda$ of $\ok$.
\end{proof}

\subsection{Split Cartan case}
\label{Split Cartan case}
Let $p$ be a prime number and $d$ be a squarefree positive integer prime to $p$.
As in \cite{Ellenberg04}, we define $\xspcar _\Z$ the compactified coarse moduli scheme over $\Z$ parametrising the triples $(E,A_p,B_p)$ with $A_p,B_p$ nonisomorphic $p$-isogenies of $E$ and $\xospcardp_\Z : = X_0 (d) \times_{X(1)} \xspcar _\Z$
  . These schemes are proper over $\Spec \Z$, and there is a natural involution
\[
 w: (E,C_d,A_p,B_p) \ra (E,C_d,B_p,A_p)
\]
so that the quotient $\xospcardp_\Z /  \langle w \rangle $ is the scheme $\xosdp _\Z$ defined in subsection \ref{rappelsrepproj}. As for Borel case, the ``forgetting $d$-structure'' and ``forgetting $p$-structure'' morphisms induce a bijection between cusps of $\xospcardp_\Q$ (resp. $\xosdp_\Q$) and couples of cusps of $X_0 (d)_\Q$ and cusps of $\xspcar_\Q$ (resp. of $\xsp_\Q$) because $d$ is prime to $p$. Henceforth, we will note $(c,c')$ the unique cusp of $\xospcardp_\Q$ (resp. $\xosdp_\Q$) above the cusps $c \in X_0 (d)_\Q$ and $c' \in  \xspcar_\Q$ (resp. $c' \in \xsp$). The scheme $\xospcardp_\Z$ is smooth in characteristics prime to $dp$. In characteristic $\ell$ dividing $d$, its singular points correspond to supersingular elliptic curves over $\overline{\Fl}$. In characteristic $p$, $\xospcardp_\Fpb$ (resp. $\xspcar_\Fpb$) is made up with three irreducible components which parametrise the quadruples $(E,C_d,A_p,B_p)$ (resp. $(E,A_p,B_p)$) such that respectively :

$\bullet$ The isogeny $A_p$ is unseparable ($Z$ component).

$\bullet$ The isogeny $B_p$ is unseparable ($Z'$ component).

$\bullet$ Neither $A_p$ nor $B_p$ are unseparable ($W$ component).

Cusps above $\infty^p \in \xspcar$ reduce in component $Z$, cusps above $0^p \in \xspcar$ reduce in component $Z'$, and all the others reduce to component $W$, which is of multiplicity $p-1$. Hence, the nonrational cusps do not reduce to smooth points.
Considering the quotient morphism $\xospcardp_\Z \ra \xosdp_\Z$ (resp. $\xspcar_\Z \ra \xsp_\Z$), the components $Z$ and $Z'$ identify in the fiber at $p$ to form a component called $Z_0$, the other irreducible component being the image of the component $W$ (still noted $W$).

We need preparatory results inspired from \cite{Momose84}. We also owe this article the original idea for the following formal immersion, that \cite{Ellenberg04} adapted for $\Q$-curves.

\begin{prop}
\label{reduccompcartdeploye}
 Let $E$ be an elliptic curve defined over a number field $K$ and $p > 2 [K : \Q] + 1$ be a prime number such that the image of $\rep$ is in the normaliser of a split Cartan subgroup of $\glep$. Let $P = (E,\{ A_p,B_p \})$ be the corresponding $K$-rational point of $\xsp$, and $K'$ an extension of degree two of $K$ over which $A_p$ and $B_p$ are defined. Then, for any prime ideal $\gP$ of $\ok$ above $p$ : 

$(a)$ The elliptic curve $E$ does not have potentially supersingular reduction at $\gP$.

$(b)$ The reduction modulo $\gP$ of $P$ in $\xsp_\Fp$ does not belong to the $W$ component.

$(c)$ For every prime ideal $\gP '$ of $\Ocal_{K'}$ above $\gP$, the reductions modulo $\gP'$ of $(E,A_p)$ and $(E/B_p, E_p/B_p)$ belong to the same irreducible component of $X_0 (p)_{\Ocal_{\gP'}} \times \Fpb$.
\end{prop}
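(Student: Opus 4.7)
The proposition comprises three claims handled by rather different arguments, with part (a) being the arithmetic core and parts (b), (c) then following from the structural consequences on the $p$-torsion.

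For part (a), the plan is to argue by contradiction: suppose $E$ has potentially supersingular reduction at $\gP$, and let $e = e(\gP|p) \leq [K:\Q]$. By Serre's classical analysis of the tame inertia action in the supersingular case (\cite{Serre71}, Proposition 12 and its corollaries), the restriction $\rho_{E,p}|_{I_\gP}$ is diagonalisable over $\Fpb$ of the form $\operatorname{diag}(\psi^e, \psi^{ep})$, where $\psi : I^t_{\Q_p} \to \Fpdeux^*$ is a fundamental character of level two. A topological generator of $I^t_\gP$ then yields an element of $\GL(E_p)$ of exact order $(p^2-1)/\gcd(e, p^2-1)$ with eigenvalues in $\Fpdeux \setminus \Fp$. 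If this element belongs to the normaliser $N$ of a split Cartan $C$, then its square lies in $C$ (since $[N : C] = 2$), and so its order must also divide $2(p-1)$. Combining the two order conditions gives $(p+1) \mid 2\gcd(e, p^2-1)$, hence $p+1 \leq 2\gcd(e, p^2-1) \leq 2e \leq 2[K:\Q]$, contradicting $p > 2[K:\Q]+1$.

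Part (b) should then follow from (a) plus the observation that the normaliser of a split Cartan contains no nontrivial unipotent element, so multiplicative reduction at $\gP$ is also ruled out. Thus $E$ has potentially good ordinary reduction at $\gP$: after an unramified base change, $E_p$ splits on the special fiber as $\mu_p \oplus \Z/p\Z$, and the two distinct order-$p$ subgroups $A_p, B_p$ must realise this decomposition. So one of the two associated isogenies is inseparable, and the lift $(E, A_p, B_p) \in \xspcar(K')$ reduces modulo $\gP'$ in either the $Z$ or the $Z'$ component of $\xspcar_\Fpb$. The quotient $\xspcar \to \xsp$ identifies $Z$ and $Z'$ to $Z_0$, so the reduction of $P$ lies in $Z_0$, not in the $W$ component.

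For part (c), I compute directly on the special fiber of $X_0(p)_{\Ocal_{\gP'}}$. Without loss of generality, $A_p$ reduces to $\mu_p$ and $B_p$ to $\Z/p\Z$ (the symmetric case is analogous). Then $(E, A_p)$ reduces in the $Z$ component (inseparable kernel). Writing $\pi : E \to E/B_p$, which is étale, the relations $E_p = A_p + B_p$ and $A_p \cap B_p = 0$ give $E_p/B_p = \pi(A_p)$ as subgroups of $E/B_p$; since $\pi$ is étale and $A_p$ connected of order $p$, the image $\pi(A_p)$ is a connected order-$p$ subgroup, necessarily the $\mu_p$ of the ordinary curve $E/B_p$. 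Hence $(E/B_p, E_p/B_p)$ also reduces in the $Z$ component. The symmetric case ($A_p$ étale, $B_p = \mu_p$) places both points in $Z'$ via the same argument applied to the Frobenius isogeny $E \to E/\mu_p$, so in either situation the two points share the same component.

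The main obstacle is unquestionably part (a), which turns on matching the large order of a supersingular tame-inertia generator against the order constraint imposed by the normaliser of a split Cartan; (b) and (c) then reduce to straightforward bookkeeping on the ordinary reduction made available by (a).
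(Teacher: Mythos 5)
Your proof of part (a) takes a genuinely different route from the paper. You work with Serre's tame-inertia description in the supersingular case, whereas the paper works with group schemes: it extends the Galois splitting $E_p \cong A_p \oplus B_p$ to finite flat group schemes over $\Ocal_{K'}$ via Raynaud's specialisation lemma (applicable since $e' \leq 2[K:\Q] < p-1$), classifies the order-$p$ summands as constant or $\mu_p$, and observes that this leaves no room for an $\alpha_p$, so the reduction cannot be supersingular. Your order computation $(p+1) \leq 2\gcd(e, p^2-1) \leq 2[K:\Q]$ is fine for \emph{good} supersingular reduction, where the exponent in the fundamental character is literally the ramification $e$ of $\gP$ over $p$. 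But for \emph{potentially} supersingular reduction — additive at $\gP$, good only after a ramified extension of degree $m \in \{2,3,4,6\}$ — the relevant exponent is $em$, not $e$, and your bound no longer gives a contradiction under $p > 2[K:\Q]+1$. A concrete problem case: $K = \Q$, $p = 7$, $j \equiv 1728 \pmod 7$, $m = 4$. Then inertia is cyclic of order $(p^2-1)/\gcd(4, p^2-1) = 48/4 = 12 = 2(p-1)$; such a cyclic group (generated by an element with scalar square $\psi^8 \in \F_7^*$ and eigenvalues $\pm\psi^4 \notin \F_7$) does sit inside the normaliser of a split Cartan. So the tame-inertia approach as written does not establish (a) under the stated hypothesis; the Raynaud route avoids this issue entirely because it makes no case distinction on the reduction type.

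For (b) and (c), your argument is in essence the paper's — identify $A_p$, $B_p$ with $\mu_p$ and $\Z/p\Z$ on the special fiber, compare the separability type of $A_p$ to that of $E_p/B_p$ — but the crucial integral step is left implicit. Both the fact that the decomposition $E_p \cong A_p \oplus B_p$ over $K'$ extends to $\Ocal_{K'}$, and the fact that the isomorphism $A_p \cong E_p/B_p$ of Galois modules extends to group schemes, depend on Raynaud's full-faithfulness theorem (Thm.\ 3.3.3 and Cor.\ 3.3.4 of \cite{Raynaud74}), which is exactly where the hypothesis $e' < p-1$ is used. You should invoke this explicitly; saying ``after an unramified base change, $E_p$ splits on the special fiber'' presupposes the finite-flat extension that needs to be justified. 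Also note that your detour through ruling out multiplicative reduction for (b) is unnecessary: the paper's argument that $(E_p)_{\Ocal'}$ is not étale (it has rank $p^2$ but at most $p$ $\Fpb$-points) directly forces one summand to be $\mu_p$, irrespective of the reduction type of $E$ at $\gP$.
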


\begin{proof}
Let $p \geq 5$ be a prime number and $\gP'$ be a prime ideal of $\Ocal' = \Ocal_{K'}$ above $p$.
The $\Gal ( \Qb / K')$-modules $E_p$, $A_p$ and $B_p$ define group schemes over $K'$ noted $(E_p)_{K'}$, $(A_p)_{K'}$ and $(B_p)_{K'}$ such that
\begin{equation}
\label{isomepapbp}
  (E_p)_{K'} \cong (A_p)_{K'} \oplus (B_p)_{K'}.
\end{equation}
Passing to Zariski closure in Néron model of $E$ over $\Ocal'$, we obtain group schemes $(E_p)_{\Ocal'}$, $(A_p)_{\Ocal'}$ and $(B_p)_{\Ocal'}$ extending the group schemes over $K'$. As the absolute ramification index $e'$ of $p$ in $\Ocal_{K'}$ is smaller that $2 [K : \Q] <p-1$, we know by Raynaud's specialisation lemma (Theorem 3.3.3 and Corollary 3.3.4 of \cite{Raynaud74}) that
 \[
 {(E_p)}_{\Ocal '} \cong {(A_p)}_{\Ocal'} \oplus {(B_p)}_{\Ocal'}.
\]
Furthermore, as $e' < p+1$ again, the finite group schemes $(A_p)_{\Ocal'}$ and $(B_p)_{\Ocal'}$ are constant or isomorphic to $\mu_p$. This already proves $(a)$ because if $E$ is potentially supersingular, $(E_p)_{\Ocal '}$ contains a group scheme $\alpha_p$. Furthermore, ${(E_p)}_{\Ocal '}$ is not étale since it has at most $p$ $\Fpb$-rational points while being of rank $p ^2$. Therefore, $(A_p)_{\Ocal'}$ or $(B_p)_{\Ocal'}$ is not étale, hence isomorphic to $\mu_p$. This proves $(b)$. Finally, the $\Gal ( \Qb / K')$-modules isomorphism $A_p \ra E_p / B_p$ given by \eqref{isomepapbp} extends (as $e'<p-1$ again) to an isomorphism ${(A_p)}_{\Ocal'} \cong {(E_p/B_p)}_{\Ocal'}$. Hence, they are simultaneously constant or isomorphic to $\mu_p$. As $E$ is not potentially supersingular, the component to which belongs the reduction of $(E,A)$ in $X_0 (p)_{\Ocal'} \times \Fpb$ is $Z$ or $Z'$, and entirely determined by the nature (étale or not) of $A$, hence the two points $(E,A_p)$ and $(E/B_p, E_p/B_p)$ reduce to 
the same component.

\end{proof}

Recall $\phi : X_0(p)_\Q \ra J_0 (p)_\Q$ is the Albanese morphism sending $\infty^p$ to 0.
\begin{defi}
Let $p$ be a prime number.
We note $\pi : \xospcardp_\Q \ra X_0 (p)_\Q$ the forgetful morphism that sends $(E,C_d,A_p,B_p)$ to $(E,A_p)$.  The map $h : \xospcardp_\Q \ra J_0 (p)_\Q$ is defined by $h: = \phi \circ \pi - \phi \circ w_p \circ (\pi \circ w) + \phi \circ \pi \circ w_d - \phi \circ w_p \circ (\pi \circ w \circ w_d)$. Functorially, we have
\[
h(E,C_d,A_p,B_p) = \cl \left( [E,A_p] - [E/B_p,E_p/B_p] + [E/C_d,A_p/C_d] - [E/(B_p+C_d),E_p/ (B_p + C_d)] \right).
\]
Furthermore, $h \circ w = - w_p \circ h$ where $w_p$ is the endomorphism of $J_0 (p)_\Q$ corresponding to Atkin-Lehner involution $w_p$ on $X_0 (p)_\Q$. For every $t \in \T$, we define $h_t = t \circ h$. Hence, if $t( 1+ w_p) = 0$, $h_t \circ w = h_t$ so that $h_t$ factors through the projection $\xospcardp_\Q \ra \xosdp_\Q$ in a $\Q$-rational morphism $h_t ^+ : \xosdp_\Q \ra J_0 (p)_\Q$.
\end{defi}

\begin{rem}
The sum $h'$ of the first two terms of $h$ give the good candidate for elliptic curves over $\Q$ (see \cite{BiluParent11}). As in Borel case, the generalisation to $\Q$-curves relies in the consideration of $h' + h' \circ w_d$.
\end{rem}

\begin{prop}
\label{propimmformellecasdeploye}
Let $\ell$ be a prime number and $t \in \T$.

  The morphism
$(h_t)_\Z : \xospcardp  ^{\rm{smooth}} \ra J_0 (p)_\Z$ extending $h_t$ by Néron mapping property is a formal immersion at the cusp $(\infty,\infty)_{\F_\ell}$ of $\xospcardp_{\F_\ell}$ if and only if $t \notin \ell \T$. If $t ( 1 + w_p) = 0$, the same condition holds for $(h_t)_\Z ^+ : \xosdp ^{\rm{smooth}} \ra J_0 (p)_\Z$ to be a formal immersion at the cusp $(\infty,\infty)_{\F_\ell}$ of $\xosdp_\Fl$.
\end{prop}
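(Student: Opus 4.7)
The plan is to mirror the proof of Proposition \ref{formalimmersionBorel}, analyzing the cotangent of $h_t$ at the cusp $x = (\infty,\infty)_{\Fl}$. First I verify $h(x) = 0 \in J_0(p)$: tracing the four terms, both $\pi$ and $w_p \circ \pi \circ w$ (and their $w_d$-conjugates) send $x$ to the cusp $\infty^p \in X_0(p)$, where $\phi$ vanishes. Since $x$ is $\Q$-rational with residue field $\Fl$ and lies in $\xospcardp^{\rm{smooth}}$ (the cusp is above $\infty^d \in X_0(d)$ and, in characteristic $p$, above the component $Z$), the formal immersion criterion for $(h_t)_\Z$ reduces to non-vanishing of the cotangent map
\[
(h_t)^*: \Cot_0 J_0(p) \otimes \Fl \longrightarrow \Cot_x \xospcardp,
\]
where I identify the source with $S_2(\G_0(p),\Fl)$ so that $\phi^*\omega$ reads off the leading Fourier coefficient of $\omega$ at $\infty^p$.

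Decomposing $h = h_1 - h_2 + h_3 - h_4$ according to the four terms, I will show three of the cotangent contributions vanish by ramification and only $h_2^*$ survives. For $h_3 = \phi \circ \pi \circ w_d$ and $h_4 = \phi \circ w_p \circ \pi \circ w \circ w_d$: exactly as in Proposition \ref{formalimmersionBorel}, $w_d$ sends $x$ into a cusp above $0^d \in X_0(d)$, and the factorization $\xospcardp \to X_0(dp) \xrightarrow{\pi_{dp,p}} X_0(p)$ is ramified there with index a multiple of $d \geq 2$, so $h_3^* = h_4^* = 0$. For $h_1 = \phi \circ \pi$: the cusp $\infty^p$ of $\xspcar$ underlying $x$ (the unique cusp above $\infty^p \in X_0(p)$) has width $p$, since its stabilizer in $\G_{\rm{sp.Car.}}(p)$ at $\tau = \infty$ is generated by $T^p$; as $\infty^p \in X_0(p)$ has width $1$, the map $\pi$ has ramification index $p$ at $x$, and $h_1^* = 0$.

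For $h_2 = \phi \circ w_p \circ \pi \circ w$: the involution $w$ sends $x$ to the cusp $(\infty, 0^p)$ on $\xospcardp$, which projects via $\pi$ to $0^p \in X_0(p)$. This cusp of $\xspcar$ (one of the $p$ cusps above $0^p$) again has width $p$, equal to that of $0^p$, so $\pi$ is unramified there. The resulting chain $\phi^*, w_p^*, \pi^*, w^*$ yields $h_2^* \omega$ as a unit multiple of the leading Fourier coefficient $a_1(\omega)$ of $\omega \in S_2(\G_0(p),\Fl)$. Consequently $(h_t)^* \omega = -h_2^*(t^*\omega)$ is a unit multiple of $a_1(t^*\omega)$, which is nonzero for some $\omega$ precisely when $t \notin \ell\T$, by Proposition \ref{immformelleAlbanese}.

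For the second assertion: if $t(1+w_p) = 0$ then $h_t \circ w = h_t$ (as the definition notes), so $h_t$ descends to $h_t^+ : \xosdp \to J_0(p)$. Since $w(x) = (\infty, 0^p) \neq x$, the cusp $x$ is not fixed by $w$, and the $2$-cover $\xospcardp \to \xosdp$ is étale at $x$; hence the cotangent map of $h_t^+$ at the image cusp $(\infty,\infty)_\Fl \in \xosdp$ is isomorphic to that of $h_t$ at $x$, and the criterion $t \notin \ell\T$ transfers. The main technical obstacle is the asymmetric cusp-width analysis on $\xspcar$, which makes $\pi$ ramified at the cusp above $\infty^p$ but étale at those above $0^p$; this asymmetry forces all cotangent contribution to come from the twisted term $h_2$, unlike the Borel case where the untwisted term carried the derivative.
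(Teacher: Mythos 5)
Your proof is correct, and the underlying strategy is the one in the paper: pull back to cotangent spaces, show that ramification at cusps kills most contributions, and reduce the surviving one to the $q$-expansion criterion of Proposition~\ref{immformelleAlbanese}. What differs is the decomposition. The paper factors $h$ through the ``forget $B_p$'' map $\psi:\xospcardp\to X_0(dp)$ and writes $h$ (up to sign and a cuspidal constant) as $g\circ\psi + g\circ w_p\circ\psi\circ w$; the first summand is killed because $\psi$ has ramification index $p$ at $(\infty,\infty)$, and the second is converted via $g\circ w_p = w_p\circ g + 2\cl([0]-[\infty^p])$ into a statement about $g_t$ at $\infty^{dp}$, which is exactly Proposition~\ref{formalimmersionBorel}. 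You instead analyze the four terms of $h$ directly, establishing the cusp-width picture on $\xspcar$ from scratch (width $p$ at the cusp over $\infty^p$, width $p$ again at the cusps over $0^p$, so the forgetful map is ramified of degree $p$ over $\infty^p$ and \'etale over $0^p$). This is precisely the content the paper delegates to Momose's Proposition 2.5 and to Proposition~\ref{formalimmersionBorel}, so your argument is self-contained where the paper's is shorter by black-boxing the Borel case. Both proofs then conclude identically on the quotient: $(\infty,\infty)$ is not a fixed point of $w$, so the double cover $\xospcardp\to\xosdp$ is \'etale there and the cotangent criterion descends. One small bonus of your version is that you keep track of the signs in $h = h_1 - h_2 + h_3 - h_4$ explicitly, whereas the paper's identity $h = g\circ\psi + g\circ w_p\circ\psi\circ w$ is only true up to sign and a constant --- harmless for the formal-immersion conclusion, but your bookkeeping is cleaner.
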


\begin{proof}
 We define $\psi : \xospcardp_\Q \ra X_0 (dp)_\Q$ as the ``forgetting $B_p$''-morphism that sends $(E,C_d,A_p,B_p)$ to $(E,C_d,A_p)$. With the definitions of the Borel case, $\pi = \pi_{dp,p} \circ \psi$ and $
 h = g \circ \psi + g \circ w_p \circ \psi \circ w.
$
As $\psi (( \infty,\infty)) = \infty^{dp} = w_p \circ \psi \circ w (\infty,\infty)$, the cotangent map of $h_\Z$ at section $(\infty,\infty)_\Z$ is the sum of the cotangent maps of $(g \circ \psi)_\Z$ and $(g \circ w_p \circ \psi \circ w)_\Z$. Notice that $\psi$ is ramified of degree $p$ at $(\infty,\infty)_\Z$ (as it can be checked out on corresponding Riemann surfaces), so that $g \circ \psi$ is. Hence, the cotangent map of $h_\Z$ is the cotangent map of $(g \circ w_p \circ \psi \circ w)_\Z$. Furthermore, we readily see that $g \circ w_p = w_p \circ g  + 2 \cl ( [0] - [\infty^p])$, so $(h_t)_\Z$ is a formal immersion at $(\infty,\infty)_{\F_\ell}$ if and only if $(g_t \circ \psi \circ w)_\Z$ is, because $w_p$ is an automorphism of $J_0 (p)_\Z$. But the cotangent map of $(\psi \circ w)_\Z : \xospcardp_\Z \ra X_0 (p)_\Z$ at the section $(\infty,\infty)_\Z$ is an isomorphism (see \cite{Momose84}, Proof of Proposition 2.5), whence the result for $(h_t)_\Z$ by Proposition \ref{formalimmersionBorel}. The result 
follows for $(h_t)_\Z ^+$ as $(\infty,\infty)$ is not a fixed point of $w$.
\end{proof}

We can now prove our result in split Cartan case.
\begin{prop}
\label{BornebonnereductioncasCartandeploye}
Let $K$ be a quadratic field. For every prime number $p=11$ or $p>13$, if $E$ is a $\Q$-curve of degree $d$ prime to $p$ defined over $K$, the image of $\Prep$ is in the normaliser of a split Cartan subgroup of $\pglep$, then $E$ has potentially good reduction at every prime ideal of $\ok$.  
\end{prop}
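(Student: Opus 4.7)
The plan is to mirror the proof of the Borel case (Proposition \ref{bonnereductionpartoutBorel}), substituting $h_t^+$ for $g_t$. To $E$ associate, as in subsection \ref{rappelsrepproj}, the point $P \in \xosdp(K)$ with $P^\sigma = w_d \cdot P$, where $\sigma$ is the nontrivial automorphism of $K$. For a fixed prime $\ell$, apply Lemma \ref{Nakayamadonneannulateurgammaical} to choose $t \in \T \setminus \ell\T$ with $t \equiv 1 \pmod{\Ical}$ and $t \cdot \gamma_\Ical = 0$; such a $t$ automatically satisfies $t(1+w_p) = 0$, so $h_t$ descends to a $\Q$-rational morphism $h_t^+ : \xosdp_\Q \to J_0(p)_\Q$, which by Proposition \ref{propimmformellecasdeploye} is a formal immersion at $(\infty,\infty)_{\F_\ell}$.

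First I would verify $h_t^+(P) \in J_0(p)(\Q)_{\text{tors}}$. For rationality, the functorial formula for $h$ shows its four summands are permuted by $w_d$ (using $w_d^2 = 1$), hence $h \circ w_d = h$ on $\xospcardp_\Q$; combined with $P^\sigma = w_d \cdot P$ this yields $h_t^+(P)^\sigma = h_t^+(P)$. For torsionness, since $t \cdot \gamma_\Ical = 0$, the morphism $h_t^+$ factors through the Eisenstein quotient $\tildeJp$, whose $\Q$-points form the finite cyclic cuspidal subgroup by Proposition \ref{propquotEisenstein}.

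Next, suppose for contradiction that $E$ has potentially multiplicative reduction at a prime $\lambda$ of $\ok$ above $\ell$. Then $P$ reduces modulo $\lambda$ to a cusp of $\xosdp_\Z$. The key point is that this cusp lies above $\infty^p$ or $0^p$ in $\xsp$, and hence in the smooth locus of $\xosdp_\Z$: for $\ell \neq p$, the Tate uniformisation (after a quadratic extension to split $A_p$ and $B_p$) gives $E_p \cong \mu_p \oplus \Z/p\Z$ as a group scheme, and since $A_p \neq B_p$ exactly one of them must be of $\mu_p$-type; for $\ell = p$, Proposition \ref{reduccompcartdeploye}(b) excludes the multiple-fibre component $W$, and a similar analysis of the Tate curve places $P_\lambda$ above $\infty^p$ or $0^p$. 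Using the transitive action of the Atkin-Lehner group $\langle w_{d'}, w_p : d' \mid d \rangle$ on these smooth cusps, one may further assume $P_\lambda = (\infty,\infty)_{\F_\ell}$. Applying Proposition \ref{superpropimmformelle} with $f = h_t^+$ then gives, for $\ell > 2$, the equality $P = (\infty,\infty)$, contradicting that $P$ parametrises an elliptic curve.

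The main obstacle is the case $\ell = 2$, where Proposition \ref{superpropimmformelle} still allows $h_t^+(P)$ to be a $2$-torsion point of $J_0(p)(\Q)$ whose Zariski closure is a copy of $\mu_2$. To rule this out, I would establish an analogue of Lemma \ref{lemcalculgP} for $h_t^+$: compute the possible values of $\rho(h_t^+(P)) \in C \cong \Z/n\Z$ using the components-group description in Proposition \ref{groupecomposantescasgeneral}. Since Proposition \ref{reduccompcartdeploye}(a) excludes potentially supersingular reduction at primes above $p$, each of the four summands of $h(P)$ contributes a value of $\rho \circ \phi$ from the ordinary-or-multiplicative list of Proposition \ref{Mazure1}(b); after combining them with the appropriate signs, $\rho(h_t^+(P))$ lies in a small subset of $\Z/n\Z$ that can be checked to avoid the unique nontrivial $2$-torsion element $n/2$ for all $p = 11$ or $p > 13$ (the four-term structure of $h$, compared with the two-term $g$, is what removes the need to exclude $p = 17, 41$ here). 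This disposes of $\ell = 2$ and completes the proof.
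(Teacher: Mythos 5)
Your outline follows the paper's strategy up to the rationality and torsionness of $h_t^+(P)$, but you then diverge from the paper and, in doing so, miss its key simplification. The paper does not merely show that $h_t^+(P)$ is a $\Q$-rational torsion point; it shows that $h_t^+(P) = 0$ \emph{exactly}. This is the crucial use of Proposition \ref{reduccompcartdeploye}$(c)$, which you never invoke: at a prime $\gP$ above $p$, the points $(E,A_p)$ and $(E/B_p, E_p/B_p)$ reduce to the \emph{same} irreducible component of $X_0(p)$, so the first two summands of $h(P)$ cancel in the component group $\Phi$, and likewise for the last two (applied to $E/C_d$). Since $h_t^+(P)$ lies in the cuspidal subgroup $C$ and the reduction $C \to \Phi$ is injective (Proposition \ref{groupecomposantescasgeneral}$(b)$), one gets $h_t^+(P)=0$. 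Once this is established, the $\ell=2$ difficulty of Proposition \ref{superpropimmformelle} evaporates automatically — there is nothing to rule out, because $z=0$. Your proposed substitute — building an analogue of Lemma \ref{lemcalculgP} to compute the possible values of $\rho(h_t^+(P))$ and exclude $n/2$ — is both unworked and unnecessary; moreover, your intuition that the four-term structure is what removes the exceptions $p=17,41$ is correct only because the answer is identically zero, which is precisely what $(c)$ delivers and you do not prove.

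A second, smaller gap concerns $\ell \neq p$. You assert that the Tate uniformisation forces exactly one of $A_p$, $B_p$ to be of $\mu_p$-type, hence places $P_\lambda$ above $\infty$ or $0$. This is not justified in general: when $p$ divides $v_\lambda(j(E)^{-1})$, the inertia action on $E[p]$ can be trivial and both $A_p$, $B_p$ can be non-canonical, so the reduced cusp need not lie above $\infty$ or $0$ in $\xspcar$. The paper does not try to exclude this directly; instead it observes that the image under $h$ of such a cusp is $2\,\cl([0]-[\infty])$, which contradicts $h_t^+(P)=0$ via injectivity of reduction on $C$ (for $\ell>2$; for $\ell=2$ only when $n\mid 4$). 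Without first establishing $h_t^+(P)=0$, your argument has no way to rule out these cusps, so the subsequent appeal to the Atkin--Lehner transitivity and the formal immersion at $(\infty,\infty)$ is incomplete.
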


\begin{proof}
 Let $P \in \xosdp (K)$ be the point associated to $E$. For any $t \in \T$ such that $t(1+w_p) = 0$, $h_t ^+$ is a $\Q$-rational morphism so $h_t ^+ (P) \in J_0 (p)_\Q (K)$. If we call $\s$ the automorphism of $K$, 
\[
 h_t ^+ (P)^\s= h_t ^+ (P ^\s) = h_t ^+ ( w_d . P ) =h_t ^+ (P)
\]
because $h \circ w_d = h$ by construction, so that $h_t ^+ (P)$ is $\Q$-rational.

Now, if we also suppose that $t$ cancels $\gamma_\Ical$ and $t = 1 \mod \Ical$ (see Lemma \ref{Nakayamadonneannulateurgammaical} and above for notations), then $h_t ^+ (P)=0$. Indeed, it is a torsion point of $J_0 (p)$ because the Eisenstein quotient has rank zero. Taking any prime $\gP$ of $\ok$ above $p$, by Proposition \ref{reduccompcartdeploye} we know that $P$ reduces modulo $\gP$ in the smooth part of $\xosdp_\Z$ (by $(a)$ and $(b)$), and that $(h_t {^+})_\Z (P_\gP)$ is 0 in the group of components $\Phi$ of $J_0 (p)_{\Ocal'} \times \overline{\F_\gP'}$ (by $(c)$). Therefore, $h_t ^+ (P) = 0$ as reduction from the cuspidal subgroup $C$ to $\Phi$ is injective (Proposition \ref{groupecomposantescasgeneral} $(b)$). We can now apply Mazur's method.

Suppose there is a prime ideal $\lambda$ of $\ok$ (above the prime number $\ell$) such that $E$ has potentially multiplicative reduction at $\lambda$. Then $P$ reduces at a cusp $c$ modulo $\lambda$, hence is in the smooth part of $\xosdp_\Z$ (this is obvious when $\lambda$ is not above $p$, and we just explained why it is true when $\lambda$ is above $p$). Actually, this cusp must be a cusp above $\infty_\Fl \in \xsp_\Fl$. Indeed, if $\ell=p$, this is what we just proved as the other cusps are not in the smooth part (Proposition \ref{reduccompcartdeploye}), and if $\ell \neq p$, simple computation shows that the image of a cusp $c$ of $\xospcardp_\Q$ not above $\infty$ or $0 \in \xspcar_\Q$ by $h$ is $2 \cl([0] - [\infty])$. For $t \in \T$ as above, as $h_t ^+ (P) = 0$ and the Zariski closure $C_\Z$ of $C$ in $J_0 (p)_\Z$ is étale outside 2 (eg by Raynaud's Theorem 3.3.3 \cite{Raynaud74}), if $\ell >2$ , it prevents $P$ from reducing at $c$ modulo $\lambda$ (unless $n=1$ or 2, impossible when $p=11$ or 
$p>13$), and for $\ell=2$, it is only possible when $n$ divides 4 which is also impossible for $p>23$. Now we know $P$ reduces modulo $\lambda$ at a cusp above $\infty_\Fl \in \xsp_\Z$. After applying an Atkin-Lehner involution $w_{d'}$, $d' |d$, we can suppose that $P$ reduces modulo $\lambda$ at $(\infty,\infty)_\Fl$. Take now $t \in \T \backslash \ell \T$ still satisfying the previous conditions by Lemma \ref{Nakayamadonneannulateurgammaical}, so we can apply Proposition \ref{propimmformellecasdeploye}. As $h_t ^+ (P) = 0$, there is no problem with the case $\ell=2$ here, and we obtain $P = (\infty,\infty)$, which is a contradiction. Therefore, $E$ has potentially good reduction at every prime ideal $\lambda$.
\end{proof}

\subsection{Nonsplit Cartan case}
\label{Nonsplit Cartan case}
In this subsection, we do not provide any qualitative improvement on the third section of \cite{Ellenberg04}, but a quantitative one. The next proposition is the algebraic part of section 3 of \emph{loc. cit.}, which uses Mazur's method and relies on the existence of a rank zero quotient on a twisted jacobian (using crucially the results of Kolyvagin and Logachev). Small characteristic issues can be easily ruled out for $p \geq 7$ in this case (see proof of Proposition 3.9 of \cite{Ellenberg04}).

\begin{prop}[\cite{Ellenberg04}, proof of Proposition 3.6]
\label{propEllnonsplitcartan}
Let $K$ be an imaginary quadratic number field of discriminant $-D_K$, $\chi_K$ the associated Dirichlet character and $d>1$ a squarefree positive integer.

 Let $p \geq 7$ be a prime number not dividing $dD_K$.
If there exists an eigenform $f \in S_2 ( \G_0 (p^2)) ^{\rm{new}}$ such that $w_p \cdot f = f$ and $L (f \otimes \chi_{K},1) \neq 0$, 
then for every strict $\Q$-curve $E$ of degree $d$ defined over $K$ such that the image of $\Prep$ is included in the normaliser of a nonsplit Cartan subgroup of $\pglep$, $E$ has potentially good reduction at every prime ideal of $\ok$.
\end{prop}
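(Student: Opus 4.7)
The plan is to adapt the Mazur-style strategy of Propositions \ref{bonnereductionpartoutBorel} and \ref{BornebonnereductioncasCartandeploye} to the nonsplit Cartan setting, working over $\xonsdp$ in place of $\xosdp$, and replacing the Eisenstein quotient of $J_0(p)$ by the simple modular abelian variety $A_f$ attached to the newform $f \in S_2(\Gamma_0(p^2))^{\rm{new}}$.

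As in subsection \ref{rappelsrepproj}, attach to $E$ a point $P \in \xonsdp(K)$ satisfying $P^\sigma = w_d \cdot P$, where $\sigma$ denotes complex conjugation. Then build a $\Q$-rational morphism $h\colon \xonsdp_\Q \to A_f$ by composing the natural degeneracy $\xonsp \to X_0(p^2)$ (which to a nonsplit Cartan structure $\alpha$ on $E_p$ assigns the cyclic subgroup of order $p^2$ generated by a fixed element of $\Fpdeux^*$) with the forgetful map $\xonsdp \to \xonsp$, the Albanese embedding into $J_0(p^2)$ and the projection $J_0(p^2) \twoheadrightarrow A_f$, together with a symmetrisation arranged so that $h \circ w_d = -h$. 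The hypothesis $w_p \cdot f = f$ is exactly what makes this composition compatible with the relevant involutions. With this set-up, $h(P) \in A_f(K)$ satisfies $h(P)^\sigma = -h(P)$, so it lies in the subgroup of $A_f(K)$ on which $\sigma$ acts by $-1$; this subgroup is canonically identified with $A_f^{\chi_K}(\Q)$, where $A_f^{\chi_K}$ is the $\chi_K$-quadratic twist of $A_f$.

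By the Kolyvagin--Logachev theorem applied to the form $f \otimes \chi_K$, whose $L$-value at $s=1$ is nonzero by hypothesis, $A_f^{\chi_K}$ has Mordell--Weil rank zero over $\Q$; hence $h(P)$ is torsion. Assume for contradiction that $E$ has potentially multiplicative reduction at some prime $\lambda$ of $\ok$. Then $P$ reduces modulo $\lambda$ to a cusp which, after applying a suitable Atkin--Lehner involution, we may take to be $(\infty,\infty)_{\F_\ell}$. An analogue of Proposition \ref{reduccompcartdeploye} in the nonsplit Cartan case (argued via Raynaud's classification of finite flat group schemes) then shows that $P$ reduces in the smooth locus of $\xonsdp_\Z$ even when $\ell$ equals $p$. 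Combined with a formal immersion verification for $h$ at $(\infty,\infty)_{\F_\ell}$, parallel to Proposition \ref{propimmformellecasdeploye}, Proposition \ref{superpropimmformelle} forces $P = (\infty,\infty)$, a contradiction.

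The main obstacle is the formal immersion step: it reduces to showing that certain Hecke-invariant differentials derived from $f$ have nonzero $q$-expansion coefficients at $\infty$, and relies crucially on $f$ being a newform with $w_p$-eigenvalue $+1$. A secondary difficulty is the treatment of small residue characteristics, in particular $\ell = 2$, where one has to appeal to an analogue of Lemma \ref{lemcalculgP} on the component group of the special fiber of $A_f^{\chi_K}$ in order to rule out problematic $2$-torsion contributions in the image of $h$.
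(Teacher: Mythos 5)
Your overall strategy — Mazur's method combined with a rank-zero quotient supplied by Kolyvagin--Logachev, plus a formal-immersion argument at the cusp — is exactly what the paper says the cited result of Ellenberg relies on, so the high-level plan is right. But the construction of the morphism $h$ contains a genuine error that blocks the proof as written.

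You propose to build $h$ by composing a ``natural degeneracy $\xonsp \to X_0(p^2)$ (which to a nonsplit Cartan structure $\alpha$ on $E_p$ assigns the cyclic subgroup of order $p^2$ generated by a fixed element of $\Fpdeux^*$).'' No such morphism exists. A nonsplit Cartan structure is an embedding $\Fpdeux \hookrightarrow \End(E_p)$, and $E_p \cong (\Z/p\Z)^2$ contains no cyclic subgroup of order $p^2$; an element of $\Fpdeux^*$ generates a subgroup of $\GL(E_p)$, not of $E_p$ or $E[p^2]$, so the phrase does not parse as a moduli-theoretic construction. More fundamentally, $\xnsp$ and $X_0(p^2)$ are \emph{not} related by a morphism of modular curves: the relation is at the level of Jacobians, via Chen's isogeny theorem (in the form used in \cite{DeSmitEdixhoven}, cited in the Appendix of this paper), which asserts that $J(\xnsp)$ is $\Q$-isogenous to $J_0(p^2)^{+,\rm new}$. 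That isogeny is produced by Hecke-algebra and trace-formula arguments, not by a map of moduli problems, so it cannot be realized as you describe.

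The fix, and what Ellenberg actually does, is to map $P \in \xonsdp(K)$ into $J(\xnsp)$ via an Albanese-type construction (using the unique cusp of $\xnsp$ as base point, and a suitable symmetrization in $w_d$ so that the image lands in the ``minus'' eigenspace for $\sigma$, hence in the twist), and then compose with Chen's isogeny $J(\xnsp) \to J_0(p^2)^{+,\rm new}$ to reach the simple factor $A_f$. Everything downstream of the construction of $h$ in your write-up — identifying the $\sigma = -1$ part with $A_f^{\chi_K}(\Q)$ up to $2$-torsion, invoking Kolyvagin--Logachev via the hypothesis $L(f\otimes\chi_K,1)\neq 0$, the reduction/formal-immersion step — is sound in outline, and the paper notes that the small-characteristic issues (including $\ell=2$) are handled in Ellenberg's Proposition 3.9 for $p\geq 7$, so a full analogue of Lemma \ref{lemcalculgP} is not in fact required here.
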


Thanks to this Proposition and using the same type of analytic estimates as in \cite{Ellenberg04}, we obtain in the Appendix the following result : 

\begin{prop}
 \label{BonnerednonsplitCartan}
Let $K$ be an imaginary quadratic field of discriminant $-D_K$. 

Let ${p> 50 D_K^{1/4} \log(D_K)}$ be a prime number not dividing $D_K$. If $E$ is a $\Q$-curve defined over $K$, of degree coprime with $p$, without complex multiplication and such that the image of $\Prep$ is in the normaliser of a nonsplit Cartan subgroup of $\pglep$, then $E$ has potentially good reduction at every prime ideal of $\ok$. 
\end{prop}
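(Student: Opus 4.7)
By Proposition \ref{propEllnonsplitcartan}, to establish potentially good reduction at every prime ideal of $\ok$, it suffices to exhibit, for each prime $p$ with $p > 50 D_K^{1/4} \log(D_K)$ not dividing $D_K$, at least one newform $f \in S_2(\Gamma_0(p^2))^{\text{new}}$ satisfying $w_p \cdot f = f$ and $L(f \otimes \chi_K, 1) \neq 0$. The plan is therefore to compute a suitable weighted first moment of such $L$-values and show it does not vanish, so that at least one summand is nonzero.

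More precisely, I would consider the sum
\[
\Sigma_K^+(p) := \sum_{f} \omega_f \, L(f \otimes \chi_K, 1),
\]
where $f$ runs over the normalised eigenforms in the $+1$-eigenspace of $w_p$ on $S_2(\Gamma_0(p^2))^{\text{new}}$ and $\omega_f = 1/\langle f,f\rangle$ is the Petersson weight, with the projection onto the $+1$-eigenspace realised by the averaging operator $\tfrac12(1+w_p)$. An approximate functional equation rewrites $L(f \otimes \chi_K, 1)$ as a truncated Dirichlet series
\[
L(f \otimes \chi_K, 1) = \sum_{n \geq 1} \frac{a_f(n)\,\chi_K(n)}{\sqrt{n}}\, V\!\left(\frac{n}{p\sqrt{D_K}}\right) + \text{(dual sum)},
\]
for a rapidly decaying cutoff $V$, so that $\Sigma_K^+(p)$ becomes a weighted sum of Fourier coefficients amenable to the Petersson trace formula. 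The diagonal term $n=1$ produces a strictly positive main term $M$ of absolute size, independent of $p$ and $D_K$, while the off-diagonal contribution is a weighted sum of Kloosterman sums $S(1,n;cp^2)$ that one bounds via Weil's estimate and a careful analysis of the Bessel transform of $V$. The standard removal of the oldform contribution at level $p^2$ introduces a controlled correction.

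This yields an estimate $\Sigma_K^+(p) = M + E(p,D_K)$ in which the main term $M > 0$ is absolute while $|E(p,D_K)|$ is explicitly bounded in terms of $p$ and $D_K$; the condition $p > 50 D_K^{1/4}\log(D_K)$ is precisely what is needed to guarantee $|E(p,D_K)| < M$, hence $\Sigma_K^+(p) \neq 0$. The main obstacle is the quantitative sharpness of this error estimate: one must track every implicit constant in the approximate functional equation (choice of test function), in the projection on the $w_p$-fixed newforms, in the oldform subtraction, and above all in the sum of Kloosterman sums produced by Petersson. It is exactly at this point that refined analytic estimates for weighted sums of $L$-functions, sharper than those of \cite{Ellenberg04}, are used in order to replace Ellenberg's larger power of $D_K$ by the announced $D_K^{1/4}\log(D_K)$; this quantitative improvement, rather than any new structural idea, is the whole content of the Appendix.
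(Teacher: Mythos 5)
Your overall strategy is the right one and matches the paper's: reduce via Proposition~\ref{propEllnonsplitcartan} to showing the weighted first moment $(a_1,L_\chi)_{p^2}^{+,new}$ is nonzero, open it up with a trace formula restricted to the $w_p$-eigenspace (the paper uses Akbary's one-sided variant of Petersson, equivalent to your averaging by $\tfrac12(1+w_p)$), peel off the oldform contribution, and show the $n=1$ diagonal dominates. Two points, however, diverge from the actual argument, and one of them is a real gap.

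First, a minor but deliberate difference: the paper does \emph{not} use an approximate functional equation. Because $K$ is imaginary quadratic the twist $f\otimes\chi_K$ lands in the $-1$-eigenspace of $w_{D^2 N}$, so the \emph{exact} integral representation $L(f\otimes\chi_K,1)=4\pi\int_{1/(D\sqrt N)}^{\infty}(f\otimes\chi_K)(iu)\,du$ gives directly the exact exponentially weighted Dirichlet series $2\sum_n a_n(f)\chi(n)n^{-1}e^{-2\pi n/(D\sqrt N)}$, with no dual sum to control. The paper flags this explicitly as a departure from Ellenberg's approach that cleans up the constants; your approximate-functional-equation version would work but carries extra baggage.

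Second, and this is the genuine gap: you attribute the gain over Ellenberg to ``a careful analysis of the Bessel transform'' together with Weil's bound on $S(1,n;cp^2)$. That is not where the improvement comes from; Weil's bound alone, applied termwise, reproduces essentially Ellenberg's dependence on $D_K$. The decisive new ingredient is a Polya--Vinogradov-type inequality for the \emph{twisted} Kloosterman sums
\[
\sup_{K,K'}\Bigl|\sum_{n=K}^{K'}\chi(n)S(m,n;c)\Bigr|\;\ll\;c\sqrt D\,\bigl(\log(Dc)+O(1)\bigr),
\]
proved by opening the Kloosterman sum, recognising a Gauss sum in the inner $\chi$-sum, and controlling the resulting geometric progression (Lemma~\ref{BornessommesKloostermanDirichlet}). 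One then performs an Abel summation against the smooth weight $J_1(4\pi\sqrt{mn}/c)e^{-nx}/\sqrt n$ to bound $\Scal_A(c)$ and $\Scal_B(d)$ for \emph{small} moduli $c,d$, reserving the Weil-induced bound for large moduli, and optimises the crossover point. It is exactly this hybrid Abel/Polya--Vinogradov estimate that produces the exponent $D_K^{1/4}\log D_K$ in the statement; your sketch as written has no mechanism to do better than Ellenberg, so the threshold on $p$ you would obtain would be the wrong power of $D_K$.
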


\section{Runge's method}
\label{Runge}
Runge's method can be used on a modular curve $X$ to bound the absolute logarithmic height of $j$-invariants of $S$-integral points on $X$, where $S$ is a set of places of the number field $K$ containing the infinite places, of cardinality smaller than the number of Galois orbits of cusps of $X$. In this article, our application of Runge's method will be a very simple case : $K$ is an imaginary quadratic field $X=X_0 (p)$ with $p$ a fixed prime number, and $S$ contains one single element : the euclidean norm on $\ok$. We define 
\[
X_0 (p) (\ok) := \left\{ P \in X_0 (p) (K) , j(P) \in \ok \right\}.
\]
Our goal is to explicitly bound the $j$-invariant of elements of $X_0 (p)(\ok)$. We denote by

$\Hcal$ the Poincaré upper half plane. 

$\Dcal$ the fundamental domain of $\Hcal$ for the action of $\slz$ bounded by the geodesic triangle $\{0,1,\infty \}$

$q_\tau := \exp ( 2 i \pi \tau)$ for any $\tau \in \Hcal$ (the $\tau$ index will be omitted if $\tau$ is obvious). 

$q_\tau ^r : = \exp ( 2 i \pi r \tau)$ for any rational number $r$.

Finally, for sake of precision, we denote $c_\infty$ (resp. $c_0$) the cusp of $X_0 (p)$ which is the image of $\infty$ (resp. 0) by the canonical projection $\pi : \Hcal \cup \P ^1 ( \Q) \ra X_0 (p)$. 
Notice that the Runge's method will be applied for $p$ dividing the degree of a $\Q$-curve, so our result has to hold for any prime $p$.

For general Runge's method, we need some knowledge about modular units (see \cite{BiluParent09} for a general exposition in the case of modular curves), but in our case we will just use one, defined as follows.

\begin{defi}
 Let $p$ be a fixed prime number. The holomorphic function $g$ on $\Hcal$ is defined for all $\tau \in \Hcal$ by
\[
 g (\tau) = \frac{ \Delta ( \tau) } {\Delta ( p . \tau )} = q_\tau^{1-p} \prod_{\substack{n=1 \\ (p,n)=1}}^{+ \infty} (1 - q_\tau ^n)^{24}.
\]
where $\Delta$ is the discriminant modular form on $\Hcal$.
This is the quotient of two modular forms of weight 12 on $\G_0(p)$, hence it defines a modular function $U$ on $X_0(p)$.
\end{defi}

The modular function $U$ benefits the following properties : 
\begin{prop}
\label{propsU}
 $(a)$ For every $\tau \in \Hcal$ : 
\[
g (-1/\t ) = p ^{12} g^{-1} (\t / p)=p  ^{12} q^{(p-1)/p} \prod_{n=1}^{\infty} \left( \frac{1 - q^n}{1- q^{n/p}} \right) ^{24}.
\]

$(b)$ For $w_p$ the Atkin-Lehner involution of $X_0(p)$, $U \circ w_p = p^{12} U^{-1}$.

$(c)$  The divisor of $U$ on $X_0(p)$ is supported by cusps, more precisely 
\[
\div (U) = (p-1) ([c_0] - [c_\infty]).
\]

$(d)$ The function $U$ is a $\Q$-rational function on $X_0 (p)$ which is integral over $\Z[j]$.
\end{prop}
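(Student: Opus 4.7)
Parts (a) and (b) are direct computations using classical properties of $\Delta$. For (a), the plan is to apply the transformation formula $\Delta(-1/\tau) = \tau^{12}\Delta(\tau)$ to both $\Delta(-1/\tau)$ and $\Delta(-p/\tau) = \Delta(-1/(\tau/p))$ in the definition of $g(-1/\tau)$: the factors of $\tau^{12}$ in numerator and denominator combine to produce $p^{12}$, leaving $g(-1/\tau) = p^{12}\Delta(\tau)/\Delta(\tau/p) = p^{12} g(\tau/p)^{-1}$. The explicit $q$-expansion then follows by substituting the product formula for $\Delta$ evaluated at $\tau$ and at $\tau/p$ and simplifying. Part (b) is immediate from (a) by setting $\tau' = p\tau$, since $w_p$ lifts to $\tau \mapsto -1/(p\tau)$ on $\Hcal$: then $g(-1/(p\tau)) = g(-1/\tau') = p^{12}/g(\tau) = p^{12}U^{-1}$.

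For (c), the product expansion of $g$ shows that $U$ is holomorphic and nonvanishing on $\Hcal$ (as $\Delta$ has no zeros on $\Hcal$), so $\div(U)$ is supported on the cusps of $X_0(p)$. At $c_\infty$, whose uniformizer is $q$, the expansion of $U$ begins with $q^{1-p}$, so $\ord_{c_\infty}(U) = -(p-1)$. Combining this with (b) and the identity $\ord_P(f \circ w) = \ord_{w(P)}(f)$ applied to $w_p$ (which exchanges the two cusps) yields $\ord_{c_0}(U) = p-1$, giving the announced divisor.

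For (d), the $\Q$-rationality of $U$ is immediate from the $q$-expansion principle, since the product formula shows $U$ has $q$-expansion in $\Z((q))$. For integrality over $\Z[j]$, I would let $\gamma_0, \dots, \gamma_p$ be coset representatives of $\G_0(p) \backslash \slz$, set $U_i := U \circ \gamma_i$, and consider the monic polynomial $P(X) = \prod_{i=0}^{p}(X - U_i)$. Its coefficients are elementary symmetric functions of the $U_i$, hence $\slz$-invariant meromorphic functions on $\Hcal$, so they lie in $\Q(j)$. Each $U_i$ is a ratio of holomorphic modular forms of weight $12$ with integral $q^{1/p}$-expansion coefficients (verifiable on standard representatives of the form $ST^k$, together with the identity), so the symmetric functions have $q$-expansions in $\Z[[q]][q^{-1}]$; since no $U_i$ has a pole on $\Hcal$, these symmetric functions are holomorphic on the affine $j$-line, and the $q$-expansion principle then forces them to lie in $\Z[j]$. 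Thus $P(X) \in \Z[j][X]$ and $U$ is integral over $\Z[j]$. The main technical point is the control of the $q^{1/p}$-expansion coefficients of all Galois conjugates $U_i$ of $U$ over $\Q(j)$, which reduces to an explicit computation with the chosen coset representatives.
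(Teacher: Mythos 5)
Your proof is correct and tracks the paper's argument closely for parts (a), (b) and (c). For (a) and (b) you use exactly the same computation from the weight-12 modularity of $\Delta$ (the paper writes $\Delta(-p/\tau) = p^{-12}\tau^{12}\Delta(\tau/p)$, matching your ``factors combine to produce $p^{12}$''). For (c) you reach the same divisor but by a slightly different route: the paper deduces $\ord_{c_0}(U) = p-1$ from the fact that a principal divisor has degree zero and $X_0(p)$ has only two cusps, whereas you use (b) and the pullback formula $\ord_P(f\circ w) = \ord_{w(P)}(f)$. Both are fine; the paper's is marginally shorter.

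For (d) you diverge in an interesting way: the paper establishes that every conjugate $g_{|\gamma}$ for $\gamma\in\slz$ has $q^{1/p}$-expansion coefficients in $\Z[e^{2i\pi/p}]\subset\overline{\Z}$, then invokes Lemma 2.1 of Kubert--Lang to conclude integrality over $\Z[j]$. You instead outline a direct proof of that lemma in this case, via the characteristic polynomial $P(X)=\prod_i(X-U_i)$. This is a legitimate and more self-contained route. One point to tighten: you write that the $U_i$ have ``integral $q^{1/p}$-expansion coefficients'' and deduce that the symmetric functions have $q$-expansion in $\Z[[q]][q^{-1}]$. The coefficients of $g_{|ST^k}$ actually lie in $\Z[\zeta_p]$, not $\Z$, so the symmetric functions a priori have coefficients in $\Z[\zeta_p]$; to descend to $\Z$ you should note that $T$-invariance forces integer powers of $q$ and that the action of $\Gal(\Q(\zeta_p)/\Q)$ on expansions permutes the $U_i$ (it sends $k$ to $ak$ for $a\in(\Z/p\Z)^\ast$), so the symmetric functions are Galois-fixed and their coefficients lie in $\Z[\zeta_p]\cap\Q=\Z$. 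With that supplied, the rest of your (d) argument is sound.
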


\begin{proof}
 The assertion $(a)$ implies $(b)$ because for every $\tau \in \Hcal$ of image $P$ in $X_0(p)$, $g ( \tau) = U (P)$ and $-1/ (p \tau)$ has image $w_p ( P)$ by definition.
 To prove $(a)$, we only write that for every $\tau \in \Hcal$, by definition of $g$,
\[
g (-1/\t) = \frac{\Delta ( -1/ \t)}{\Delta (-p/ \t)} = \frac{\t ^{12} \Delta (\t)}{p^{-12} \t ^{12} \Delta (\t/p)} = p ^{12} g ^{-1} (\t/p),
\] 
because $\Delta$ is a modular form of weight 12 on $\slz$.
Next, the discriminant modular form does not cancel on $\Hcal$, therefore the divisor of $U$ is indeed supported by the two cusps $c_\infty$ and $c_0$. The $q$-expansion of $g$ at $\infty$ shows that the order of the pole of $U$ at $c_\infty$ is $(p-1)$, and the order at $c_0$ is necessarily the opposite, which proves $(c)$.
The modular function $U$ is $\Q$-rational on $X_0(p)$ as a quotient of two $\Q$-rational modular forms of weight 12 on $X_0(p)$. Only the integrality remains to be proved. Recall that
\begin{equation}
\label{decoupagesldeuxgamma0}
 \slz = \G_0 (p) \cup \bigcup_{k \in \Z} \G_0 (p) \cdot \begin{pmatrix} 0 & 1 \\ -1 & -k \end{pmatrix}
\end{equation}
Indeed, for every $\gamma \in \slz$, either $\gamma \cdot \infty = \gamma' \cdot \infty$ with $\gamma' \in \G_0 (p)$, and in this case $\gamma \in \G_0(p)$ because the stabiliser of the cusp $\infty$ in $\slz$ is contained in $\G_0(p)$, or $\gamma \cdot \infty = \gamma' \cdot 0$ with $\gamma' \in \G_0(p)$. The matrix $w = \begin{pmatrix} 0 & 1 \\ -1 & 0 \end{pmatrix}$ sends $\infty$ to 0, so we can write 
\[
 (\gamma ' w)^{-1} \gamma = \pm \begin{pmatrix} 1 & k \\ 0 & 1 \end{pmatrix}
\]
for some integer $k$, as $(\gamma' w)^{-1} \gamma \cdot \infty = \infty$. This proves \eqref{decoupagesldeuxgamma0}. From this equation, we know that for every $\gamma \in \slz$, the $q$-expansion of $g _{|\gamma}$ (that is, the image of $g$ by the usual right action of $\slz$ on functions on $\Hcal$) is a formal series in $q_ \tau ^{1/p}$ with algebraic integer coefficients. Indeed, if $\gamma \in  \G_0 (p) \cdot \begin{pmatrix} 0 & 1 \\ -1 & -k \end{pmatrix}$, 
\[
 g_{|\gamma} ( \tau) = g ( -1 / ( \tau + k)) =  p  ^{12} e^{2 i \pi (p-1) k/p} {q_\tau}^{(p-1)/p} \prod_{n=1}^{\infty} \left( \frac{1 - q_\tau^n}{1- e^{2 i \pi n k/p}q_\tau^{n/p}} \right) ^{24}
\]
by $(a)$, so this $q_\tau$-expansion has coefficients in $\Z [ e^{2 i \pi / p}] \subset \overline{\Z}$. Hence, from Lemma 2.1 of \cite{KubertLang} (Chapter 2.2), we know that $U$ is integral on $\Z[j]$.
\end{proof}

%
%
%
%
%
%

\begin{rem}
This proof is somewhat elementary, but the reader will notice that we only reproved some well-known results of the theory of modular units (which is far more general) for $U$. Actually, with the notations of (\cite{KubertLang}, Chapter 2), 
\[
 g := \prod_{a=1}^{p-1} g_{(\frac{a}{p},0)} ^{12p},
\]
and with results of the same chapter, we recover all the results of the previous Proposition except $(b)$. A consequence of $(b)$ is that $p^{12} U^{-1}$ is integral over $\Z [j]$, and it seems that the theory of modular units would only have predicted that $p ^{12p} U^{-1}$ is (see \cite{BiluParent09}).
\end{rem}

\begin{lem}
\label{encadrementUpointsok}
 For every $P \in Y_0 (p) ( \ok)$, $U(P)$ is a nonzero element of $\ok$ such that
\[
0 \leq \log |U(P)| \leq 12 \log p.
\]
\end{lem}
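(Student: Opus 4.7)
The plan is to combine the two properties of $U$ listed in Proposition~\ref{propsU}, namely the divisor computation $(c)$ and the integrality statement $(d)$, together with the Fricke-twist identity $(b)$, to see $U(P)$ as an integral element of $\ok$ that divides $p^{12}$.

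First I would check that $U(P)$ is a nonzero algebraic integer lying in $K$. Since $P \in Y_0(p)(\ok)$, the point $P$ is not a cusp, so by $(c)$ we have $U(P) \neq 0, \infty$. Since $U$ is $\Q$-rational and $P \in X_0(p)(K)$, we have $U(P) \in K$. By $(d)$, $U$ is integral over $\Z[j]$, and $j(P) \in \ok$ by assumption, so $U(P)$ is integral over $\ok$, hence $U(P) \in \ok \setminus \{0\}$.

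Next I would apply the same reasoning to $w_p(P)$ to get the upper bound. Since $w_p$ is $\Q$-rational, $w_p(P) \in X_0(p)(K)$. Moreover, the classical modular polynomial $\Phi_p(X,Y)$ yields $\Phi_p(j, j \circ w_p) = 0$, so $j \circ w_p$ is integral over $\Z[j]$; evaluating at $P$ gives $j(w_p(P)) \in \ok$, i.e.\ $w_p(P) \in Y_0(p)(\ok)$ as well (it is not a cusp either, since $w_p$ permutes the cusps and $P$ is not one). By the argument of the previous paragraph applied to $w_p(P)$, we get $U(w_p(P)) \in \ok$. But by $(b)$, $U(w_p(P)) = p^{12}/U(P)$, so $U(P)$ divides $p^{12}$ in $\ok$.

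Finally, I would use that $K$ is imaginary quadratic to translate the divisibility into the desired inequality. For the standard archimedean absolute value on $K \subset \C$, the norm satisfies $|N_{K/\Q}(x)| = |x|^2$ for $x \in K$. Since $U(P)$ is a nonzero element of $\ok$, we have $|U(P)|^2 = |N_{K/\Q}(U(P))| \geq 1$, whence $\log|U(P)| \geq 0$. Since $U(P)$ divides $p^{12}$ in $\ok$, we have $|N_{K/\Q}(U(P))| \leq |N_{K/\Q}(p^{12})| = p^{24}$, so $|U(P)|^2 \leq p^{24}$, whence $\log|U(P)| \leq 12 \log p$.

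No step looks like a serious obstacle: the only subtle point is noting that $w_p(P)$ is again integral (i.e.\ justifying the integrality of $j \circ w_p$ over $\Z[j]$), which is immediate from the modular polynomial.
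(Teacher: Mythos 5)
Your proposal is correct and follows essentially the same route as the paper: show $U(P)\in\ok\setminus\{0\}$ via integrality over $\Z[j]$, apply the same to $w_p\cdot P$ to get $p^{12}U^{-1}(P)\in\ok$, and then use that a nonzero element of $\ok$ has archimedean absolute value at least $1$. The only cosmetic difference is that you justify $j(w_p\cdot P)\in\ok$ via the modular polynomial, whereas the paper invokes the fact that $w_p\cdot P$ represents a curve isogenous to the one represented by $P$; both are valid.
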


\begin{proof}
As $U$ is $\Q$-rational and integral over $\Z[j]$ (Proposition \ref{propsU} $(d)$), $U(P) \in \ok$ and is nonzero because $U$ does not cancel on $Y_0(p)$. The same thing is true for $w_p \cdot P$ : indeed, $w_p \cdot P \in Y_0 (p) ( \ok)$ because $w_p$ is $\Q$-rational and $w_p \cdot P$ represents an elliptic curve isogenous to the elliptic curve represented by $P$, so $j ( w_p \cdot P) \in \ok$ too. Therefore, $U \circ w_p ( P) = p ^{12} U^{-1} (P) \in \ok$. As $K$ is an imaginary quadratic field, for every nonzero element $\alpha \in \ok$, $\log | \alpha | \geq 0$, hence 
\[
0 \leq \log |U(P)| \leq 12 \log p.
\]
\end{proof}

We define the involution $w$ on $\Hcal$ by $w ( \tau) = -1/ \tau$ and the function $g_0$ on $\Hcal$ by $g_0: = g \circ w$.

The following lemma, that we call ``locating near cusps lemma'' allows us to reduce Runge's method to computation with the two functions $g$ and $g_0$.
\begin{lem}
 For every point $P \in Y_0 (p) (\C)$, there exists $\tau \in \Dcal + \Z$
such that $\tau$ or $-1/\tau$ is above $P$ by the canonical projection $\Hcal \ra Y_0 (p)(\C)$. In the first case, we say $P$ is near $c_\infty$, and then $j(P) = j(\tau)$ and $U (P) = g(\tau)$. In the second case, we say $P$ is near $c_0$, and then $j(P) = j(\tau)$ and $U(P) = g_0 ( \tau)$.
\end{lem}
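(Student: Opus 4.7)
The plan is to lift $P$ to $\Hcal$ and then use the coset decomposition \eqref{decoupagesldeuxgamma0} of $\slz$ modulo $\G_0(p)$ already established in the proof of Proposition \ref{propsU} to adjust the lift into (a translate of) the standard fundamental domain, possibly after inversion.

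First I would pick any preimage $\tau_0 \in \Hcal$ of $P$ under $\pi : \Hcal \to Y_0(p)(\C)$, and then pass to $Y(1)(\C)$: the image of $P$ there is represented by a unique $\tau' \in \Dcal$, so there exists $\gamma \in \slz$ with $\tau_0 = \gamma \cdot \tau'$. Applying \eqref{decoupagesldeuxgamma0}, either $\gamma \in \G_0(p)$, or $\gamma = \gamma' \cdot \begin{pmatrix} 0 & 1 \\ -1 & -k \end{pmatrix}$ with $\gamma' \in \G_0(p)$ and $k \in \Z$.

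In the first case, $\tau'$ itself projects to $P$, so I take $\tau = \tau' \in \Dcal \subset \Dcal + \Z$; the relations $j(P) = j(\tau)$ and $U(P) = g(\tau)$ are then immediate from the $\slz$-invariance of $j$ and from the very definition of $U$ as the modular function on $X_0(p)$ induced by $g$. In the second case I take $\tau = \tau' + k$. A direct Möbius computation gives
\[
\begin{pmatrix} 0 & 1 \\ -1 & -k \end{pmatrix} \cdot \tau' = \frac{1}{-\tau' - k} = -\frac{1}{\tau},
\]
so $\tau_0$ and $-1/\tau$ are $\G_0(p)$-equivalent, hence $-1/\tau$ projects to $P$. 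Using once more that $j$ is $\slz$-invariant and that by definition $g_0 = g \circ w$ with $w(\tau) = -1/\tau$, we get $j(P) = j(-1/\tau) = j(\tau)$ and $U(P) = g(-1/\tau) = g_0(\tau)$.

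No real obstacle is expected: the proof is essentially a repackaging of the coset decomposition already used to prove the integrality statement in Proposition \ref{propsU}(d). The only point that requires (trivial) care is the Möbius identity above and the observation that $\tau' + k$ still lies in $\Dcal + \Z$, which is clear.
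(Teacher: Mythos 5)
Your proof is correct and follows essentially the same route as the paper: lift $P$ to $\Hcal$, move the lift into $\Dcal$ by an element of $\slz$, and use the coset decomposition \eqref{decoupagesldeuxgamma0} to decide whether that element is in $\G_0(p)$ or in the coset of $w T^k$, producing $\tau\in\Dcal+\Z$ with either $\tau$ or $-1/\tau$ above $P$. The only cosmetic difference is that you write $\tau_0=\gamma\tau'$ while the paper writes $\beta\tau_0=\tau_1$ and works with $\beta^{-1}$; these are the same thing.
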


\begin{proof}
 Let $P \in Y_0 (p) (\C)$. Choose a lift $\tau_0 \in \Hcal$ of $P$.  There exists $\beta \in \slz$ such that $\beta \cdot \tau_0=\tau_1 \in \Dcal$. This $\tau_1$ is not above $P$ anymore unless $\beta \in \G_0 (p)$ (in this case, choose $\tau = \tau_1$ in the lemma, and $P$ is near $c_\infty$). Suppose now $\beta \notin \G_0 (p)$. From \eqref{decoupagesldeuxgamma0}, we can write 
\[
 \beta^{-1}  = \gamma \cdot w \cdot \begin{pmatrix} 1 & k \\ 0 & 1 \end{pmatrix}
\]
for some $k \in \Z$ and $\gamma \in \G_0(p)$, $w= \begin{pmatrix} 0 & 1 \\ - 1 & 0 \end{pmatrix}$. Hence, $\tau = \begin{pmatrix} 1 & k \\ 0 & 1 \end{pmatrix} \cdot \tau_1 \in \Dcal + \Z$, and $w . \tau = \gamma^{-1} \cdot \tau_0$ is above $P$. In this case, we say $P$ is near $c_0$.
\end{proof}

We now need a lemma for precise estimates of $q$-expansions of $g$ and $g_0$.
\begin{lem}
\label{lemmajosommes}
 For every $r \in ]0,1[$ and every $q \in \C$ such that $|q| \leq r$,
\[
 \sum_{n=1} ^{+ \infty} | \log |1 - q^n|| \leq \frac{-\log(1-r)}{r(1-r)} |q|.
\]
For every $q \in \C$ such that $|q|<1$, 
\[
 \sum_{n=1}^{+ \infty} |\log |1 - q^n|| \leq  \frac{\pi ^2}{6\log |q^{-1}|}.
\]
\end{lem}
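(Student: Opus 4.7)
The common starting point is the elementary estimate
\[
|\log|1-z|| \leq -\log(1-|z|) \qquad (|z|<1),
\]
which follows from the chain $1-|z| \leq |1-z| \leq 1+|z| \leq 1/(1-|z|)$: the left inequality gives $\log|1-z| \geq \log(1-|z|)$ and the right gives $\log|1-z| \leq -\log(1-|z|)$. Applying this termwise reduces both inequalities of the lemma to upper bounds on $\sum_{n\geq 1} -\log(1-|q|^n)$.

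For the first inequality, I would use that the function $x \mapsto -\log(1-x)/x = \sum_{m\geq 0} x^m/(m+1)$ has nonnegative coefficients, hence is increasing on $(0,1)$. Consequently, for any $x \in (0,r]$,
\[
-\log(1-x) \leq \frac{-\log(1-r)}{r}\, x.
\]
Since $|q|^n \leq |q| \leq r$ for every $n \geq 1$, summing gives
\[
\sum_{n=1}^{+\infty} -\log(1-|q|^n) \leq \frac{-\log(1-r)}{r}\sum_{n=1}^{+\infty} |q|^n = \frac{-\log(1-r)}{r}\cdot\frac{|q|}{1-|q|},
\]
and the bound $1-|q| \geq 1-r$ yields the announced estimate.

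For the second inequality, I would expand $-\log(1-|q|^n) = \sum_{k\geq 1} |q|^{nk}/k$ and exchange the order of summation (all terms are nonnegative), which gives
\[
\sum_{n=1}^{+\infty} -\log(1-|q|^n) = \sum_{k=1}^{+\infty} \frac{1}{k}\sum_{n=1}^{+\infty} |q|^{nk} = \sum_{k=1}^{+\infty} \frac{1}{k}\cdot \frac{|q|^k}{1-|q|^k}.
\]
Setting $y = \log|q|^{-1} > 0$, one has $|q|^k/(1-|q|^k) = 1/(e^{ky}-1)$, and the elementary inequality $e^u - 1 \geq u$ for $u \geq 0$ gives $1/(e^{ky}-1) \leq 1/(ky)$. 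Hence
\[
\sum_{k=1}^{+\infty} \frac{1}{k}\cdot \frac{|q|^k}{1-|q|^k} \leq \sum_{k=1}^{+\infty} \frac{1}{k^2 y} = \frac{\pi^2}{6\log|q^{-1}|},
\]
which closes the argument. There is no serious obstacle here: the only subtleties are checking the monotonicity of $-\log(1-x)/x$ and justifying the interchange of summation, both of which are immediate from positivity.
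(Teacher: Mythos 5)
Your proof is correct. Since the paper only sketches the first inequality (``triangular inequality and the maximum principle'') and outsources the second to Lemma 3.5 of \cite{BiluParent09}, let me compare your route to what those sketches suggest.

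For the first bound, the paper's intended argument is presumably to apply the maximum principle to the holomorphic function $w \mapsto \log(1-w)/w$ on $|w|\leq r^n$ (with $w=q^n$), obtaining $|\log(1-q^n)| \leq \frac{-\log(1-r^n)}{r^n}\,|q|^n \leq \frac{-\log(1-r)}{r}\,|q|^n$, and then to sum the geometric series. You reach the same termwise estimate without any complex analysis: you bound $|\log|1-z||$ by $-\log(1-|z|)$ via the sandwich $1-|z| \leq |1-z| \leq (1-|z|)^{-1}$, and then invoke the monotonicity of $-\log(1-x)/x$ (nonnegative power-series coefficients). This is a genuinely more elementary route; it buys you a self-contained argument at no cost in the final constant, since both paths pass through the same inequality $-\log(1-|q|^n) \leq \frac{-\log(1-r)}{r}|q|^n$ and the same geometric-series summation with $1-|q|\geq 1-r$.

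For the second bound, your argument (expand $-\log(1-|q|^n)$ into the double series, interchange by positivity, bound $1/(e^{ky}-1) \leq 1/(ky)$, and recognise $\sum 1/k^2 = \pi^2/6$) is the standard one and, as far as I can tell, is exactly the content of the cited lemma of \cite{BiluParent09}. All the steps you flag as the only subtleties --- monotonicity of $-\log(1-x)/x$ and Fubini for nonnegative terms --- are indeed immediate, so the proof is complete.
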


\begin{proof}
 The first inequality is a straightforward consequence of the triangular inequality and the maximum principle. The second one can be found in the proof of Lemma 3.5 of \cite{BiluParent09}.
\end{proof}

We obtain from this lemma nontrivial bounds on $g$ and $g_0$.
\begin{prop}
\label{propgqtau}
 For every $\tau \in \Dcal + \Z$, 
\begin{eqnarray*}
| \log |g(\t)| + (p-1) \log |q_\t| |  & \leq & 25 |q_\t|. \\
| \log |g_0 (\t)| - \frac{p-1}{p} \log |q_\t|| & \leq & \frac{4 \pi ^2 p}{\log |q_\t^{-1}|} + 12 \log(p).
\end{eqnarray*}
\end{prop}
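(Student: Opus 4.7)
The plan is to read both estimates directly off the product expansions of $g$ and $g_0 = g\circ w$, then apply the appropriate inequality from Lemma \ref{lemmajosommes}. The two expansions I need are the defining one
\[
g(\tau) = q_\tau^{1-p}\prod_{\substack{n \geq 1 \\ (n,p)=1}}(1-q_\tau^n)^{24},
\]
and the one obtained for $g\circ w$ via the functional equation of Proposition \ref{propsU}(a), which combined with $q_{\tau/p} = q_\tau^{1/p}$ gives
\[
g_0(\tau) \;=\; p^{12}\,g(\tau/p)^{-1} \;=\; p^{12}\,q_\tau^{(p-1)/p}\prod_{\substack{n\geq 1 \\ (n,p)=1}}(1-q_\tau^{n/p})^{-24}.
\]

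For the first inequality, taking $\log|\cdot|$ of the definition yields
\[
\log|g(\tau)| + (p-1)\log|q_\tau| \;=\; 24\sum_{(n,p)=1}\log|1-q_\tau^n|.
\]
Since $\tau \in \Dcal+\Z$ forces $\Im(\tau)\geq\sqrt{3}/2$ and hence $|q_\tau|\leq r := e^{-\pi\sqrt{3}}$, the first inequality of Lemma \ref{lemmajosommes}, restricted to indices coprime to $p$ (which only decreases the sum), bounds the right-hand side in absolute value by $24\,\frac{-\log(1-r)}{r(1-r)}|q_\tau|$. A direct numerical check shows that $24\,\frac{-\log(1-r)}{r(1-r)} < 25$ (the constant is approximately $24.16$), which yields the claimed bound.

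For the second inequality, taking $\log|\cdot|$ of the rewritten formula for $g_0$ gives the clean identity
\[
\log|g_0(\tau)| - \frac{p-1}{p}\log|q_\tau| \;=\; 12\log p \;-\; 24\sum_{(n,p)=1}\log|1-q_\tau^{n/p}|.
\]
The triangle inequality bounds the left-hand side in absolute value by $12\log p + 24\sum_{n\geq 1}|\log|1-q_\tau^{n/p}||$, and the second inequality of Lemma \ref{lemmajosommes}, applied with $q_\tau^{1/p}$ in place of $q$, estimates this tail by $\frac{\pi^2 p}{6\log|q_\tau^{-1}|}$; multiplying by $24$ produces exactly the announced $\frac{4\pi^2 p}{\log|q_\tau^{-1}|}$ term.

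The only subtle point is the algebraic rearrangement at the start: using $g_0(\tau) = p^{12}g(\tau/p)^{-1}$ rather than the two-product form $\prod\bigl((1-q^n)/(1-q^{n/p})\bigr)^{24}$ of Proposition \ref{propsU}(a) collapses the second estimate to a single sum, so one triangle inequality and one application of Lemma \ref{lemmajosommes} produce the sharp constants. Splitting instead into the two sums $\sum\log|1-q_\tau^n|$ and $\sum\log|1-q_\tau^{n/p}|$ would give an extraneous $(p+1)/p$ factor and force one to absorb a $\log p$ slack; the cleaner rewriting avoids this entirely.
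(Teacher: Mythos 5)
Your proof is correct and follows essentially the same route as the paper: expand $g$ and $g_0$ as $q$-products, use $|q_\tau|\leq e^{-\pi\sqrt3}$ to invoke the two inequalities of Lemma \ref{lemmajosommes}, and check the numerical constants. The paper also reduces the second estimate to the single sum $\sum_{(n,p)=1}\log|1-q_\tau^{n/p}|$ (obtained from the two-ratio product of Proposition \ref{propsU}(a) by cancellation), so your ``cleaner rewriting'' remark describes a bookkeeping shortcut to the same identity rather than a genuinely different argument.
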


\begin{proof}
 Using the $q$-expansion of $g$ and Lemma \ref{lemmajosommes} with $r=0.005$, we have
\[
 | \log |g_\infty (\t)| + (p-1) \log |q_\t| | = 24 \left| \sum_{\substack{n \geq 1 \\(p,n)=1}} \log |1 - q_\t ^n| \right| \leq - 24 \frac{\log (0.995)}{0.995 \cdot 0.005} |q_\t|\leq 25 |q_\tau|
\]
because $|q_\tau| \leq 0.005$ when $\tau \in \Dcal + \Z$.
 For the bound with $g_0$, we use the other inequality of Lemma \ref{lemmajosommes} with $q$-expansion of $g_0$ and obtain 
\[
 | \log |g_0 (\t)| - \frac{p-1}{p} \log |q_\t| - 12 \log(p)| = 24 \left| \sum_{\substack{n \geq 1 \\(p,n)=1}} \log |1 - q_\t ^{n / p}| \right| \leq \frac{4 \pi ^2}{\log |q_\t ^{-1/p}|} = \frac{4 \pi ^2 p}{\log |q_\t^{-1}|}.
\]

\end{proof}

Finally, we recall an inequality for the $j$-invariant (extracted from Corollary 2.2 $(iii)$ of \cite{BiluParent09}).
\begin{prop}
\label{propjinvariantqtau}
 For every $\tau \in \Dcal + \Z$, if $|j(\tau)| >3500$, then $\log |j(\tau)| \leq \log | q_\tau ^{-1}| + \log(2)$.
\end{prop}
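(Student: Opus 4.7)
The approach is to bootstrap from the classical $q$-expansion
\[
j(\tau) = q_\tau^{-1} + 744 + \sum_{n \geq 1} c_n q_\tau^n,
\]
valid on all of $\Hcal$, where the Fourier coefficients $c_n$ are positive integers (with $c_1 = 196884$, $c_2 = 21493760$, and so on). On $\Dcal + \Z$ one has $\Im(\tau) \geq \sqrt{3}/2$, hence $|q_\tau| \leq e^{-\pi\sqrt{3}} \approx 0.00433$, so the tail $\sum_{n \geq 1} c_n |q_\tau|^n$ converges comfortably and can be handled term-by-term in Lemma~\ref{lemmajosommes} style.

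First I would extract a preliminary upper bound on $|q_\tau|$ from the hypothesis $|j(\tau)| > 3500$. Applying the reverse triangle inequality to the $q$-expansion gives
\[
|q_\tau^{-1}| \geq |j(\tau)| - 744 - \sum_{n \geq 1} c_n |q_\tau|^n,
\]
and the crude bound available on the fundamental domain already forces $|q_\tau^{-1}|$ to be much larger than $744$. Re-injecting this sharper bound on $|q_\tau|$ into the sum, one sees that only the first few terms (in practice $744 + c_1|q_\tau|$) contribute substantially, since the tail $c_n|q_\tau|^n$ decays superexponentially for $|q_\tau|$ of order $10^{-3}$ or smaller, using $c_n = O(e^{4\pi\sqrt{n}}/n^{3/4})$.

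Next, I would combine these estimates with the direct triangle inequality
\[
|j(\tau)| \leq |q_\tau^{-1}| + 744 + \sum_{n \geq 1} c_n |q_\tau|^n
\]
and verify that under the hypothesis $|j(\tau)| > 3500$ the right-hand side is at most $2|q_\tau^{-1}|$, i.e.\ that $744 + \sum c_n|q_\tau|^n \leq |q_\tau^{-1}|$. Taking logarithms then yields the stated inequality $\log|j(\tau)| \leq \log|q_\tau^{-1}| + \log 2$.

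The only real obstacle is numerical bookkeeping: one has to pick the threshold in the hypothesis large enough so that a single bootstrap suffices to make $744 + 196884\,|q_\tau| + \cdots$ strictly smaller than $|q_\tau^{-1}|$, and the value $3500$ is evidently calibrated so that the resulting ratio stays below the target factor $2$. Since this is essentially the estimate recorded as Corollary~2.2(iii) of \cite{BiluParent09}, one can either cite it directly or carry out the short explicit calculation above.
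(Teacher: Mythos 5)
Your proposal is correct, and in fact the paper does not give a proof at all for this statement: it simply cites Corollary~2.2(iii) of Bilu--Parent, exactly as you suggest as an alternative at the end. Your self-contained sketch is the natural way to verify the inequality, and the bootstrap you describe does close. Concretely: on $\Dcal + \Z$ one has $|q_\tau| \leq e^{-\pi\sqrt{3}} \approx 0.00433$, which yields $\sum_{n \geq 1} c_n |q_\tau|^n \lesssim 1335$; from $|j(\tau)| > 3500$ and the reverse triangle inequality one then gets $|q_\tau^{-1}| \geq 3500 - 744 - 1335 > 1420$, hence $|q_\tau| < 7.1 \times 10^{-4}$; re-injecting gives $\sum_{n \geq 1} c_n |q_\tau|^n < 150$, so $744 + \sum_{n \geq 1} c_n |q_\tau|^n < 894 < 1420 \leq |q_\tau^{-1}|$, and the direct triangle inequality gives $|j(\tau)| \leq 2|q_\tau^{-1}|$ as required. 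The only cosmetic caveat is that you should carry out this one-round bootstrap explicitly rather than gesture at it, since the very first estimate $744 + \sum c_n |q_\tau|^n \approx 2079$ is not below $|q_\tau^{-1}|$ on the nose at the edge of the fundamental domain; it is precisely the hypothesis $|j(\tau)| > 3500$ that supplies the improved bound on $|q_\tau|$ needed to finish.
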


We can now state our bound on the $j$-invariant.
\begin{prop}
\label{Rungeborne}
 Let $K$ be an imaginary quadratic field. For every prime number $p$ and every point $P \in Y_0 (p) (\ok)$, 
\[
 \log|j (P)| < 2 \pi \sqrt{p} + 6 \log(p) + 8.
\]
\end{prop}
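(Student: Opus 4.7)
The plan is to apply Runge's method in the simplest possible setting: since $K$ is imaginary quadratic, its unique archimedean place is automatically the only place in $S$, and Lemma \ref{encadrementUpointsok} supplies the two-sided inequality $0 \leq \log|U(P)| \leq 12\log p$, which plays the role of the Runge inequality. First reduce to $|j(P)| > 3500$: otherwise $\log|j(P)| < \log 3500 < 8.2$, already below $2\pi\sqrt{p} + 6\log p + 8$ for every prime $p$. Then apply the locating near cusps lemma to get $\tau \in \Dcal + \Z$ above $P$, either with $U(P) = g(\tau)$ ($P$ near $c_\infty$) or $U(P) = g_0(\tau)$ ($P$ near $c_0$); set $L := \log|q_\tau^{-1}| > 0$. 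By Proposition \ref{propjinvariantqtau}, $\log|j(P)| \leq L + \log 2$, so it suffices to bound $L$ from above.

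If $P$ is near $c_\infty$, use the first estimate of Proposition \ref{propgqtau} together with the upper half $\log|U(P)| \leq 12\log p$ of the Runge inequality:
\[
(p-1) L \leq \log|g(\tau)| + 25|q_\tau| = \log|U(P)| + 25|q_\tau| \leq 12\log p + 25 e^{-\pi\sqrt{3}}.
\]
This forces $L$ to be of order $(\log p)/p$, which is comfortably below the target.

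The main case is $P$ near $c_0$. Here we use the other half $\log|U(P)| \geq 0$ (which is where the integrality of $U(P)$ is essential, together with $p^{12} U(P)^{-1}$ being integral, i.e.\ the Atkin-Lehner relation $U \circ w_p = p^{12} U^{-1}$). The second estimate of Proposition \ref{propgqtau} applied to $U(P) = g_0(\tau)$ gives
\[
0 \leq \log|g_0(\tau)| \leq -\frac{p-1}{p} L + 12\log p + \frac{4\pi^2 p}{L},
\]
i.e.\ the quadratic inequality
\[
L^2 - \frac{12 p\log p}{p-1}\, L - \frac{4\pi^2 p^2}{p-1} \leq 0.
\]
Solving for the positive root and using $\sqrt{a^2 + b^2} \leq a + b$ yields $L \leq \frac{12 p\log p}{p-1} + \frac{2\pi p}{\sqrt{p-1}}$. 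Writing $\frac{p}{p-1} = 1 + \frac{1}{p-1}$ and $\frac{p}{\sqrt{p-1}} = \sqrt{p}\,\sqrt{1 + 1/(p-1)}$, expand against the two targets $6\log p$ and $2\pi\sqrt{p}$, so that the remaining additive discrepancies together with $\log 2$ are absorbed into the constant $8$.

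The main obstacle is purely numerical: the quadratic bound degrades for small primes, where the factors $p/(p-1)$ and $p/\sqrt{p-1}$ are farthest from $1$. Either a sharper treatment of $\sqrt{B^2+4A}$ when $B^2 \ll A$ (which holds as soon as $p$ is moderately large), or a separate direct verification for the finitely many small primes, will be needed so that the explicit constant $8$ is uniform in $p$.
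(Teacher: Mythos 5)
Your proposal follows the paper's route step for step: reduce to $|j(P)| > 3500$, apply the locating-near-cusps lemma, split into the near-$c_\infty$ and near-$c_0$ cases, and in the latter derive the quadratic inequality $\frac{p-1}{p}L^2 - 12\log p \cdot L - 4\pi^2 p \leq 0$ in $L = \log|q_\tau^{-1}|$. Your solution $L \leq \frac{12p\log p}{p-1} + \frac{2\pi p}{\sqrt{p-1}}$ is correct. Note, though, that the paper writes the intermediate bound with a coefficient $\frac{6p\log p}{p-1}$ where you (correctly) get $\frac{12p\log p}{p-1}$; the paper's constant is not derivable from its own displayed quadratic, and your caution about small primes is therefore genuinely warranted, not just a matter of tidying. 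Indeed, with the correct $12$ the chain $L + \log 2 < 2\pi\sqrt{p} + 6\log p + 8$ fails numerically for small $p$: already at $p=2$ the exact positive root gives $L \lesssim 23.4$ against a target near $20.4$, and the shortfall persists up to roughly $p \leq 19$.

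However, your proposed remedy --- ``a sharper treatment of $\sqrt{B^2+4A}$'' --- cannot close this gap, because the offending term $\frac{12p\log p}{p-1}$ already exceeds $6\log p + 8$ by itself for small $p$; no improvement of $\sqrt{B^2+4A} \leq B + 2\sqrt A$ helps. What actually closes it is noticing that the quantity bounded by $\frac{4\pi^2 p}{L}$ is $24\sum_{(p,n)=1}|\log|1-q_\tau^{n/p}||$, and for small $p$ one should use the \emph{first} inequality of Lemma \ref{lemmajosommes} rather than the Baker-style $\frac{\pi^2}{6\log|q^{-1/p}|}$ bound: since $|q_\tau^{1/p}| \leq e^{-\pi\sqrt{3}/p}$ is small for small $p$, the first bound gives a contribution proportional to $e^{-L/p}$, which is negligible, and the quadratic collapses to a linear inequality that easily meets the target. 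So the right fix is a case split on $p$ using the other half of Lemma \ref{lemmajosommes}, or a direct numerical check for the finitely many small primes; your proposal correctly identifies that a fix is needed but points to the wrong lever.
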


\begin{proof}
 Let $\tau$ be a point of $\Dcal + \Z$ associated to $P$ by the ``locating near cusps lemma''.
 If  we have $\log|j (P)| < 2 \pi \sqrt{p}$, there is nothing to prove. If not, $|j ( \tau)| >3500$ hence $\log |j(\tau)| \leq \log | q_\tau ^{-1}| + \log(2)$ by Proposition \ref{propjinvariantqtau}. 
We now have to bound $|q_\tau ^{-1}|$. If $P$ is near $c_\infty$, we have $\log | g ( \tau)| =  \log |U(P)| \leq 12 \log p$ by Lemma \ref{encadrementUpointsok}. By Proposition \ref{propgqtau}, we obtain
\begin{equation}
\label{casPpresinfini} 
 \log |q_\t ^{-1}| \leq \frac{25 |q_\t| + 12 \log p}{p-1} \leq 2 \pi \sqrt{p} 
\end{equation}
after a little analysis (as $|q_\tau| \leq 0.005$ here).
If $P$ is near $c_0$, $\log | g_0 ( \tau)| =  \log |U(P)| \geq 0$ by Lemma \ref{encadrementUpointsok}. By Proposition \ref{propgqtau}, we obtain this time
\[
 \left( \frac{p-1}{p} \right) \log |q_\t ^{-1}| \leq \frac{4 \pi ^2 p}{\log |q_\t^{-1}|} + 12 \log(p),
\]
hence
\begin{equation}
\label{casPpreszero}
  \log |q_\t ^{-1}| \leq \frac{2 \pi p}{\sqrt{p-1}} + \frac{6 p \log(p)}{(p-1)} \leq 2 \pi \sqrt{p} + 6 \log(p) + 7
\end{equation}
because $p \geq 2$, after a small analysis on the remaining terms. In each case, \eqref{casPpresinfini} or \eqref{casPpreszero} gives us the result.
\end{proof}

\section{Isogeny theorems and end of the proof}
\label{theoremesdisogenie}

We now use isogeny theorems from \cite{GaudronRemond} to complete the proof and obtain completely explicit bounds.
These theorems use the notion of stable Faltings height $h_{\Fcal}$ (see \cite{GaudronRemond}, subsection 2.3 for details), but we recall that for every elliptic curve $E$ defined over a number field $K$, 
\begin{equation}
\label{lienFaltingsWeil}
 h_{\Fcal}(E) \leq \frac{1}{12} h(j(E)) + 2.38
\end{equation}
with $h$ the absolue logarithmic height of $j(E) \in K$, that is,
\[
 h (j(E)) = \frac{1}{[K : \Q]} \sum_{v  \in M} \max ( 0, \log ( |j(E)|_v))
\]
where $M$ is the set of places of $K$ : see Lemma 7.9 of \cite{GaudronRemond}. The different normalisations for Faltings height in \cite{GaudronRemond} have to be taken into account : with the notations of this article, $h(E) = h_\Fcal(E) + \log(\pi)/2$, so we obtain the constant $2.38$ instead of $2.95$, but this does not matter for the following, as we round up to 3.

The following result is proved in \cite{GaudronRemond} in part 7.3, but is not stated this way. We explain why after its statement.

\begin{prop}[\cite{GaudronRemond}, part 7.3]
 \label{thmisogenieCEmodifie}
Let $E$ be an elliptic curve without complex multiplication and $B$ an abelian surface both defined over the number field $K$. Let $\psi : B \ra E \times E$ be an isogeny defined over $K$. Suppose (hypothesis $(\ast)$) that for every embedding $\sigma : K \ra \C$, if $\Omega_{E,\sigma}$ and $\Omega_{B,\sigma}$ are the period lattices of $E$ and $B$ with respect to this embedding, $\mathrm{d} \psi (\Omega_{B,\sigma})$ (which is a sublattice of $\Omega_{E,\sigma} ^2$) contains an element $(\omega_1, \omega_2)$ of $\Omega_{E,\sigma}$ which is a $\Z$-basis of $\Omega_{E,\sigma}$. Then,
\[
 \deg ( \psi) \leq 10 ^7  [K : \Q]^2 ( \max \{ h_\Fcal (E), 985 \} + 4 \log [K : \Q] ) ^2.
\]
\end{prop}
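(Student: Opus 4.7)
The strategy is to apply the Period Theorem of Gaudron--Rémond (\cite{GaudronRemond}, Theorem~1.2) to $B$ with a well-chosen polarization and the small period produced by hypothesis $(\ast)$, then to absorb the residual dependence in $\deg\psi$ by a standard dichotomy.

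The first step is to fix an archimedean embedding $\sigma : K \ra \C$ and, by~$(\ast)$, to choose $\omega_B \in \Omega_{B,\sigma}$ whose image under $d\psi$ is a $\Z$-basis $(\omega_1, \omega_2)$ of $\Omega_{E,\sigma}$. The key geometric claim is that the smallest abelian subvariety $H \subset B$ (defined over $\Qb$) whose complexified tangent space at the origin contains $\omega_B$ is $B$ itself. Indeed, $\psi(H)$ is an abelian subvariety of $E \times E$ whose tangent space contains $(\omega_1, \omega_2)$; since $E$ has no complex multiplication, $\End(E \times E) = M_2(\Z)$ and every proper connected abelian subvariety of $E \times E$ is $E \times 0$, $0 \times E$, or a graph $\{(x, nx) : x \in E\}$ for some $n \in \Z$. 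The complex tangent line of each such subvariety at $0$ has slope in $\Q \cup \{\infty\}$, whereas the ratio $\omega_2/\omega_1$ is the modular parameter $\tau \in \Hcal$ of $E$ at $\sigma$, which is irrational. Hence $\psi(H) = E \times E$ and $H = B$ as $\psi$ is finite.

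The second step is to polarize $B$ via $M := \psi^{\ast} L$, where $L$ is a principal polarization of $E \times E$; then $\deg_M(B) = \deg(\psi) \cdot \deg_L(E \times E) = \deg(\psi)$. The Period Theorem, applied to $(B, M, \omega_B)$ with minimal subvariety equal to $B$, yields an inequality of the shape
\[
\deg(\psi) \;\leq\; c_1 \, [K:\Q]^2 \bigl(\max\{h_\Fcal(B), c_2\} + 4 \log [K:\Q]\bigr)^2,
\]
where the norm $\|\omega_B\|_\sigma$ has been absorbed via $\|\omega_B\|_\sigma \leq \|(d\psi)^{-1}\|\cdot\|(\omega_1,\omega_2)\|_\sigma$ and the standard lower bound on the shortest period of $E$ at $\sigma$ in terms of $h_\Fcal(E)$.

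The final step passes from $h_\Fcal(B)$ to $h_\Fcal(E)$ through Raynaud--Faltings: since $B$ is $K$-isogenous to $E \times E$ of degree $\deg(\psi)$, one has $|h_\Fcal(B) - 2 h_\Fcal(E)| \leq \tfrac{1}{2} \log \deg(\psi)$, while $h_\Fcal(E \times E) = 2 h_\Fcal(E)$. Substituting produces a self-referential inequality roughly of the form
\[
\deg\psi \leq A\bigl(\max\{h_\Fcal(E),985\} + 4\log[K:\Q]\bigr)^2 + B\,(\log\deg\psi)^2,
\]
and the main obstacle is to close this inequality cleanly. The constants $985$ and $10^7$ in the statement are calibrated precisely so that an elementary dichotomy---either $\deg\psi$ is already smaller than the announced bound, in which case we are done, or $\log\deg\psi$ is dominated by the bracketed factor, which can then be absorbed into the main term---closes the argument. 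This delicate arithmetic bookkeeping, together with the explicit form of the constants in the Period Theorem, is what is carried out in \S7.3 of \cite{GaudronRemond}.
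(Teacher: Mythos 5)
Your key geometric lemma is the same as the paper's (no proper abelian subvariety of $E \times E$ can contain $(\omega_1,\omega_2)$ in its tangent space once $\End(E\times E)=M_2(\Z)$, because all such tangent lines have real, in fact rational, slope while $\omega_2/\omega_1 \in \Hcal$ is non-real), and at a high level your plan is plausible. But the paper's route through \cite{GaudronRemond} §7.3 is structurally different from yours: rather than applying the Period Theorem directly to $(B,\psi^*L,\chi)$, the paper forms the auxiliary threefold $A = E \times E \times B$, the period $\omega = (\omega_1,\omega_2,\chi)$, and identifies $A_\omega = \{(\psi(z),z) : z \in B\}$. This is done precisely so that one can \emph{replay} the computation of §7.3 of Gaudron--Rémond (which is set up in exactly this ``product-with-the-graph'' form for their Theorem~1.4) essentially verbatim, and thereby inherit the constants $10^7$ and $985$.

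This matters because your final step contains a genuine gap. You claim that ``the constants $985$ and $10^7$ in the statement are calibrated precisely so that an elementary dichotomy \dots\ closes the argument,'' and that ``this delicate arithmetic bookkeeping \dots\ is what is carried out in \S7.3 of \cite{GaudronRemond}.'' But §7.3 carries out \emph{their} bookkeeping for \emph{their} setup (the product threefold), not for your direct application to $(B,\psi^*L)$. Having chosen a different reduction, you cannot defer the numerics to them; you would have to rerun the quantitative argument from scratch, including the Raynaud--Faltings comparison $|h_\Fcal(B) - 2h_\Fcal(E)| \le \tfrac12 \log\deg\psi$, the choice of embedding, and the final dichotomy. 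The paper, by mirroring §7.3, only needs to flag two localised deviations: (i) the distinguished embedding $\sigma_0$ is chosen to \emph{minimise} the imaginary part of $\tau$ over all embeddings and all reduced bases (your proof fixes an arbitrary $\sigma$, which loses this optimisation and may not produce the stated constants), and (ii) the slope computation of Lemma 7.6 of \cite{GaudronRemond} must be re-done using only that $B$ is isogenous to $E \times E$ of degree $\Delta$, which the paper checks gives the same bound. Your proposal addresses neither.

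A minor imprecision: the proper connected abelian subvarieties of $E \times E$ are the connected kernels $\ker(ax - by)^\circ$ for $(a,b) \in \Z^2$ primitive, not merely $E\times 0$, $0\times E$ and the graphs $\{(x,nx)\}$; the inference that the associated tangent lines have slope in $\Q \cup \{\infty\}$ is nonetheless correct, which is all that is needed.
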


\begin{proof}
The bound given here is exactly the bound of Theorem 1.4 of \cite{GaudronRemond}, because the computation is almost exactly the same. Consider some isogeny $\psi: B \ra E \times E$ satisfying hypothesis $(\ast)$ above. For every embedding $\sigma$, there is a canonical norm $\| \cdot \|_\sigma$ (coming from a principal polarization of $E$) on the tangent space $t_{E,\sigma}$, which contains $\Omega_{E,\sigma}$. We fix an embedding $\sigma_0$ such that there is a basis $(\omega_1,\omega_2)$ of $\Omega_{E,\sigma}$ which is minimal amongst all minimal bases for all possible period lattices $\Omega_{E,\sigma}$. This means that 
\[
 \| \omega_1 \|_{\sigma_0} = \max_{\sigma} \min_{\substack{\omega \in \Omega_{E,\sigma} \\ \omega \neq 0}} \| \omega \|_\sigma
\]
and $\omega_2 = \tau \omega_1$ with $\tau$ in the Siegel fundamental domain (so that $y = \im (\tau)$ is minimal amongst all choices of embeddings and minimal bases for these embeddings). This choice of $\sigma_0$ to minimise $y$ is the same as in part 7.3 of \cite{GaudronRemond}. We now identify all considered abelian varieties with their scalar extensions to $\C$ via this embedding $\sigma_0$, and therefore omit all further mention to the embeddings in the notation. We can compose $\psi$ by an isomorphism of $E \times E$ so that $d \psi ( \Omega_E)$ contains the basis $(\omega_1,\omega_2)$ previously chosen, because we assumed hypothesis $(\ast)$.
We then consider $A = E \times E \times B$ and a period $\omega=(\omega_1,\omega_2,\chi)$ of $\Omega_E ^2 \times \Omega_B$, with $\chi \in \Omega_B$ such that $\mathrm{d} \psi ( \chi) = (\omega_1, \omega_2)$. The minimal abelian subvariety $A_\omega$ of $A$ containing $\omega$ in its tangent space is then
\[
 A_\omega = \left\{ (\psi(z),z), z  \in  B \right\}.
\]
Indeed, the inclusion $A_\omega \subset \left\{ (\psi(z),z), z  \in  B \right\}$ is clear and the projection from $A_ \omega$ to $E \times E$ is a subvariety of $E \times E$ containing $(\omega_1,\omega_2)$ in its period lattice. As $E$ is an elliptic curve without complex multiplication, the endomorphism ring of $E \times E$ is $M_2 ( \Z)$, therefore no strict abelian subvariety of $E \times E$ contains $(\omega_1,\omega_2)$ in its tangent space. This proves that the dimension of $A_ \omega$ is at least 2, hence the equality above.
The abelian variety $A_\omega$ is canonically isomorphic to $B$ and the projection to $E \times E$ is an isogeny of degree $\Delta$. Everything is in place, and from now on, we can repeat the computation of part 7.3 of \cite{GaudronRemond} to obtain the bound of the proposition.
There are only two small differences to be noticed : first, the embedding $\sigma_0$ can be real or complex, and the bound can change if $\sigma_0$ is real as part 7.3 uses that $\sigma_0$ is complex to improve slightly on the bound. To avoid this issue, we consider $K' = K(i)$ and go back from the start with $K'$ instead of $K$. This sticks with the proof of part 7.3 as here also, an extension of degree at most 2 of $K$ was needed.
Second, the computation of the slopes in Lemma 7.6 of \cite{GaudronRemond} is slightly different, but using only that $B$ is isogenous to $E \times E$ with degree $\Delta$ will give the exact same bound. 
\end{proof}

For our results on $\Q$-curves, we will prove the following result that gives an explicit version of Serre's surjectivity theorem, as stated by Masser-Wüstholz in \cite{MasserWustholz93bis}.
\begin{thm}
\label{Serreexplicite}
 Let $E$ be an elliptic curve defined over the number field $K$ without complex multiplication. We define $\Bcal$ (resp. $\Ccal$) a set of prime numbers $p$ such that the image of $\rep$ is included in a Borel subgroup (resp. the normaliser of a Cartan subgroup, split or nonsplit) of $\GL(E_p)$. Then, the following inequality holds : 
\[
 \prod_{p \in \Bcal} p \prod_{q \in \Ccal} \frac{q^2}{4} \leq 10^7 [K : \Q] ^2 \left(  \max \{ h_\Fcal (E), 985 \} + 4 \log [K : \Q] + 4 |\Ccal| \log(2) \right) ^2.
\]
In particular, the Galois representation $\rep$ is surjective for every prime number 
\[
 p > 10^7 [K : \Q] ^2 \left(  \max \{ h_\Fcal (E), 985 \} + 4 \log [K : \Q] \right) ^2
\]
not dividing the discriminant of $K$.
 \end{thm}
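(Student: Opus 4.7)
The plan is to apply the isogeny theorem (Proposition \ref{thmisogenieCEmodifie}) to a carefully chosen isogeny $\psi : B \to E \times E$ of abelian surfaces over an auxiliary extension $K'/K$, whose degree captures each prime of $\Bcal$ with weight $p$ and each prime of $\Ccal$ with weight $q^{2}$, and then to rearrange the resulting inequality to absorb $[K':K]$.

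For each $q \in \Ccal$, the quotient of the image of $\rho_{E,q}$ by the Cartan subgroup it lies in is a subgroup of $\Z/2\Z$, giving a quadratic character $\epsilon_q : \GalK \to \{\pm 1\}$; taking $K'$ to be the compositum of the fixed fields of the $\epsilon_q$ yields $[K':K] \leq 2^{|\Ccal|}$ and ensures that $\rho_{E,q}$ restricted to $\Gal(\overline{K}/K')$ takes values in a Cartan for every $q \in \Ccal$. Over $K'$, one builds a $\Gal(\overline{K}/K')$-stable subgroup $\Gamma \subset E \times E$ of order $N := \prod_{p \in \Bcal} p \cdot \prod_{q \in \Ccal} q^{2}$: each Borel prime $p$ contributes $L_p \times \{0\}$ (with $L_p \subset E[p]$ the stable line); each split Cartan prime $q$ contributes $A_q \times B_q$ (for the two Cartan eigenlines); and each nonsplit Cartan prime $q$ contributes the graph $\Gamma_q = \{(x, \alpha_0 x) : x \in E[q]\}$ of multiplication by a fixed $\alpha_0 \in \F_{q^2}^{*}$, which is stable of order $q^{2}$ precisely because the Galois action is $\F_{q^2}$-linear on $E[q] \cong \F_{q^2}$. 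Setting $B = (E \times E)/\Gamma$ with quotient $\pi : E \times E \to B$, one produces $\psi : B \to E \times E$ of degree at least $N$ (from the dual $\pi^{\vee}$ composed with a suitable polarization on $B$) and verifies the period hypothesis $(\ast)$ for this $\psi$.

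Applying Proposition \ref{thmisogenieCEmodifie} over $K'$ then yields
\[
\prod_{p \in \Bcal} p \cdot \prod_{q \in \Ccal} q^{2} \;\leq\; 10^{7} [K':\Q]^{2} \bigl( \max\{h_{\Fcal}(E), 985\} + 4 \log[K':\Q] \bigr)^{2}.
\]
Using $[K':\Q]^{2} \leq 4^{|\Ccal|}[K:\Q]^{2}$ and $\log[K':\Q] \leq \log[K:\Q] + |\Ccal|\log 2$, dividing by $4^{|\Ccal|}$ gives the stated product inequality. For the surjectivity statement, $p > 30[K:\Q]+1$ rules out the exceptional case (Proposition \ref{exceptionalcase}), and $p \nmid \mathrm{disc}(K)$ forces $\det \rep$ to be surjective since $K \cap \Q(\zeta_p) = \Q$; applying the main inequality with the singletons $\Bcal = \{p\}$ and $\Ccal = \{p\}$ in turn gives $p$ and $p^{2}/4$ below the stated and a very slightly larger threshold respectively, and the simpler bound in the Cartan case follows by a short numerical comparison that absorbs the extra $4\log 2$ term using $h_{\Fcal}(E) \geq 985$.

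The main technical obstacle lies in constructing $\psi$ so that $\deg \psi \geq N$ and hypothesis $(\ast)$ hold simultaneously. For the Borel and split Cartan parts alone, $\Gamma$ factors as a product $\Gamma_{1} \times \Gamma_{2}$ of subgroups of the two factors, so $B$ is itself a product of elliptic curves, $\psi$ can be taken as a product of dual isogenies, and $(\ast)$ reduces to a routine computation with period lattices. The nonsplit Cartan pieces $\Gamma_q$ do \emph{not} factor this way, so $B$ is no longer a product; one must exploit the structure of $\End(E \times E) = M_{2}(\Z)$ and choose the polarization on $B$ carefully to keep $\deg \psi$ close to $N$ while ensuring that $\mathrm{d}\psi(\Omega_{B,\sigma})$ still contains a pair $(\omega_{1}, \omega_{2})$ forming a $\Z$-basis of $\Omega_{E,\sigma}$, which is the exact form of hypothesis $(\ast)$.
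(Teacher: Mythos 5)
Your overall architecture matches the paper's: pass to an extension $K'$ of degree $\leq 2^{|\Ccal|}$ so that every Cartan-normalizer becomes a Cartan, build a $\Gal(\Kb/K')$-stable subgroup of $E\times E$ whose order records each prime with the right weight, form $B=(E\times E)/\Gamma$, apply Proposition \ref{thmisogenieCEmodifie}, and then do the algebraic clean-up with $[K':\Q]\leq 2^{|\Ccal|}[K:\Q]$ to absorb the $4^{|\Ccal|}$. That last step and the derivation of the surjectivity statement are fine. But the central technical point --- producing a \emph{single concrete} isogeny $\psi:B\to E\times E$ of the announced degree satisfying hypothesis $(\ast)$ --- is not actually carried out, and you flag the nonsplit Cartan piece as the main obstacle without resolving it. That is a genuine gap, not a detail: it is exactly where the argument lives.

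The paper avoids the obstacle entirely by making different choices at two points. First, the subgroup at a Borel prime is taken to be $G_p=C_p\times E[p]$ of order $p^3$ (not $L_p\times\{0\}$ of order $p$), and the subgroup at \emph{every} Cartan prime (split or nonsplit alike) is the \emph{graph} $G_q=\{(x,g_q\cdot x):x\in E[q]\}$ of a chosen semisimple non-homothety $g_q$ in the Cartan subgroup, order $q^2$. Second, $\psi$ is defined uniformly and concretely by the relation $\psi\circ\varphi=[n_\Bcal n_\Ccal]$ where $\varphi:E\times E\to B$ is the quotient map, so $\deg\psi=(n_\Bcal n_\Ccal)^4/|G|=n_\Bcal n_\Ccal^2$ regardless of whether $B$ happens to be a product of elliptic curves. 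With this $\psi$, hypothesis $(\ast)$ becomes the statement that the lattice $\Omega'=n_\Bcal n_\Ccal\,\Omega$ (with $\Omega=\pi^{-1}(G)$) contains a $\Z$-basis of $\Omega_{E}$. This is checked prime by prime: for $p\in\Bcal$, the image of $\Omega'$ in $E[p]^2$ is $C_p\times E[p]$, which manifestly contains a vector of determinant $1$; for $q\in\Ccal$, the image is the graph of $g_q$, and Lemma \ref{lemmquadformdet} shows the quadratic form $x\mapsto\det(x,g_q x)$ is surjective on $\F_q$, again giving a determinant-one vector. The local conditions are glued into a global basis via the surjectivity of $\slz\to\prod_{p}\SL_2(\Z/p\Z)$. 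Your proposal replaces this with case-by-case descriptions ($L_p\times\{0\}$, $A_q\times B_q$, graph for nonsplit), a vaguely specified $\psi$ (``$\pi^\vee$ composed with a suitable polarization''), and does not address the simultaneous-basis issue across primes. In particular, with $\Gamma_p=L_p\times\{0\}$ and the \emph{complementary} isogeny $\psi\circ\pi=[N]$, the image of $\mathrm{d}\psi(\Omega_B)$ in $E[p]^2$ is $L_p\times\{0\}$, whose elements all have determinant $0$ --- so $(\ast)$ fails with that $\psi$, and you would have to rely on a different, undescribed $\psi$ built from a polarization of $B$; this is exactly what becomes problematic once the nonsplit Cartan factors are present, because $B$ is then no longer a product of elliptic curves. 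You should adopt the paper's uniform graph construction together with the explicit $\psi\circ\varphi=[n_\Bcal n_\Ccal]$ and Lemma \ref{lemmquadformdet}; these close the gap and handle all three cases simultaneously.
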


First, we prove a technical lemma to help us prove our isogeny $\psi$ satisfies $(\ast)$.
\begin{lem}
 \label{lemmquadformdet}
Let $p$ be a prime number, $V$ be an $\Fp$-vector space of dimension 2 and $\underline{v}$ be a basis of $V$. Then, for $g \in \GL(V)$, the quadratic form
\[
 Q : x \longmapsto \det_{\underline{v}} ( x, g.x)
\]
is surjective in $\Fp$ if $g$ is semisimple and not an homothety.
\end{lem}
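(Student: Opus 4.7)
The plan is to reduce the lemma to the classical surjectivity of nondegenerate binary quadratic forms over $\Fp$.

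First, I would compute $Q$ explicitly in coordinates. Writing $g = \matabcd$ in the basis $\uv$ and $x = (x_1,x_2)^\top$, one obtains directly
\[
Q(x_1,x_2) \;=\; \det \begin{pmatrix} x_1 & ax_1+bx_2 \\ x_2 & cx_1+dx_2 \end{pmatrix} \;=\; c\,x_1^2 + (d-a)\,x_1x_2 - b\,x_2^2,
\]
whose discriminant as a binary quadratic form is $(d-a)^2 + 4bc = \operatorname{tr}(g)^2 - 4\det(g)$. This is precisely the discriminant of the characteristic polynomial $\chi_g$ of~$g$.

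Next, I would translate the hypothesis into a non-vanishing statement for this discriminant. For a $2\times 2$ matrix, ``semisimple'' means the minimal polynomial is squarefree, while ``not a homothety'' means the minimal polynomial has degree $\geq 2$. Hence the minimal polynomial coincides with $\chi_g$ and has two distinct roots in $\overline{\Fp}$, so $\operatorname{tr}(g)^2 - 4\det(g) \neq 0$. Consequently $Q$ is a nondegenerate binary quadratic form over $\Fp$ (recall that in the setup of this paper $p>5$, so characteristic-$2$ pathologies are absent).

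Finally, I would invoke the classical fact that a nondegenerate binary quadratic form over $\Fp$ represents every element of $\Fp$. Splitting according to whether the discriminant is a square in $\Fp$: in the split case $\chi_g$ has two distinct roots $\lambda,\mu$ in $\Fp$, and working in an eigenbasis of $g$ transforms $Q$ into a nonzero scalar multiple of $(\mu-\lambda)\,\alpha\beta$, which is trivially surjective; in the nonsplit case $Q$ is equivalent (up to a scalar) to the norm form $N_{\Fpdeux/\Fp}$, whose surjectivity follows from the fact that the norm map $\Fpdeux^* \to \Fp^*$, given by $x\mapsto x^{p+1}$, has image of size $(p^2-1)/(p+1) = p-1 = |\Fp^*|$. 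Either way $Q$ is surjective, which concludes the proof.

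There is no real obstacle here: the argument is essentially one direct computation (yielding the explicit form of $Q$ and identifying its discriminant with that of $\chi_g$) followed by invocation of the standard classification of binary forms over finite fields.
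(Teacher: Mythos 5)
Your proof is correct, and its case split (eigenvalues of $g$ in $\Fp$ or not) mirrors the paper's exactly: the paper handles the irreducible case by observing $Q$ has no isotropic vector and hence is surjective, and the reducible semisimple case by passing to an eigenbasis where $Q$ becomes $(\mu-\lambda)x_1x_2$ up to a global nonzero scalar (using that a change of basis multiplies $Q$ by $\det_{\underline{v'}}(\underline{v})$). What you add that the paper skips is the explicit coordinate computation $Q = cx_1^2+(d-a)x_1x_2-bx_2^2$ together with the identification of its discriminant with $\operatorname{tr}(g)^2-4\det(g)$, i.e., the discriminant of $\chi_g$; this makes the nondegeneracy of $Q$ under the hypotheses transparent before the square/nonsquare dichotomy and neatly explains \emph{why} the two cases of the paper line up with the two equivalence classes of nondegenerate binary forms over $\Fp$. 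The paper's argument is a bit more basis-free; yours is a bit more self-contained. Both are valid and of comparable length; the characteristic-$2$ caveat you flag is indeed harmless here since the paper's running convention has $p>5$.
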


\begin{proof}
First, if $g$ is simple, $Q$ does not have any isotropic vector, and is therefore surjective (looking at its expression in an orthonormal basis).
In the other cases, as for every $x,y \in V$ and any bases $\underline{v}$ and $\underline{v'}$, 
\[
\det_{\underline{v'}} (x,y) =\det_{\underline{v'}}(\underline{v}) \cdot \det_{\underline{v}}(x,y)
\]
from the properties of determinant, two quadratic forms $Q$ built from two different choices of bases are proportional. Therefore, $Q$ is surjective if and only if it is surjective for one/any other choice of basis. We assumed $g$ is semisimple but not simple, therefore  there is a basis $\underline{v} =(v_1,v_2)$ in which $g$ is diagonal with distinct eigenvalues $\lambda$ and $\mu$.
In this basis, the expression of $Q$ is simply, for $x = x_1 v_1 + x_2 v_2$, 
\[
 Q(x) = (\mu - \lambda) x_1 x_2,
\]
hence $Q$ is surjective.
\end{proof}

We can now prove Theorem \ref{Serreexplicite}.

\begin{proof}
We begin with the first inequality and will explain afterwards how it implies the effective version of Serre's surjectivity theorem. Let us fix $K'$ an extension of $K$ of degree $2^{|\Ccal|}$ such that for every $q \in \Ccal$, the image of $\rho_{E,q}$ restricted to $\Gal ( \Kb / K')$ is included in a Cartan subgroup. 
Define $n_\Bcal= \prod_{p \in \Bcal} p$ and $n_\Ccal = \prod_{q \in \Ccal} q$. Then the proposition to prove is equivalent to
\[
 n_\Bcal n_\Ccal ^2 \leq 10^7 [K' : \Q] ^2 \left( ( \max \{ h_\Fcal (E), 985 \} + 4 \log [K' : \Q] \right) ^2.
\]
Therefore, we only need to find an abelian variety $B$ and an isogeny $\psi : B \ra E \times E$ both defined over $K'$, and such that $\psi$ has degree $n_\Bcal n_\Ccal ^2$, satisfying the hypothesis $(\ast)$ of Proposition \ref{thmisogenieCEmodifie}.
For every $p \in \Bcal$, we choose $C_p$ an $\Fp$-line fixed by $\rep$ and define $G_p = C_p \times E[p]$, which is a subgroup of $E[p] ^2$ of cardinality $p^3$. For every $q \in \Ccal$, we choose an element $g_q$ of the associated Cartan subgroup which is not an homothety (notice such an element is always semisimple). Then, we consider $ G_q := \{ (x, g_q \cdot x), x \in E[q] \} \subset E[q]^2$. 
This group is of cardinality $q^2$ and stable by the diagonal action of the Cartan subgroup (as Cartan subgroups are commutative), hence defined over $K'$ in $E \times E$ by hypothesis. We now consider 
\[
 G = \bigoplus_{p \in \Bcal} G_p \oplus \bigoplus_{q \in \Ccal} G_q \subset E[n_\Bcal n_\Ccal] ^2.
\]
This is a group of order $n_\Bcal ^3 n_\Ccal ^2$, defined over $K'$ by hypothesis. We consider the quotient abelian variety $B = (E \times E)/G$ and the quotient morphism $\varphi : E \times E \ra B$ with kernel $G$. As $G$ is contained in  $E[{n_\Bcal n_\Ccal}]^2$, there exists an isogeny $\psi :B \ra E \times E$ such that
\[
 \psi \circ \varphi = [n_\Bcal n_\Ccal],
\]
that is, the multiplication by $n_\Bcal n_\Ccal$ on $E \times E$. This isogeny is defined over $K'$, and by degree multiplicativity, we find $\deg ( \psi ) = n_\Bcal n_\Ccal^2$. Hence, we only have to prove now that $\psi$ satisfies $(\ast)$.

Let $\sigma : K' \ra \C$ be an embedding. We now prove hypothesis $(\ast)$ for $\psi$ and $\sigma$. For this embedding, $E$ is naturally identified with the quotient of its tangent space $t_{E,\sigma}$ by its period lattice $\Omega_{E,\sigma}$ (we then omit further reference to $\sigma$ here). If $\pi$ is the projection $t_E \times t_E \ra E \times E$, the lattice $\Omega = \pi^{-1} (G)$ of $\t_E ^2$ defines a quotient abelian variety $t_E \times t_E / \Omega$ that is isomorphic to $B$. With these embeddings, we have the commutative diagram
 \[
\xymatrix{t_E \times t_E \ar[r]^{\Id} \ar[d]_{\pi} & t_E \times t_E \ar[r]^{n_\Bcal n_\Ccal} \ar[d] & t_E \times t_E \ar[d]_{\pi} \\
 E \times E \ar[r]^{\varphi} & B \ar[r]^{\psi} & E \times E.
 }
\]
Therefore, it remains to prove that $\Omega ' = n_\Bcal n_\Ccal \Omega \subset \Omega_E \times \Omega_E$ contains a basis of $\Omega_E$. 
Choose a basis $(e_1,e_2)$ of $\Omega_E$.

Now, consider the image of $\Omega '$ in $(\Omega_E / p \Omega_E)^2$ for $p \in \Bcal \cup \Ccal$. Notice that after multiplication by $1/p$, this image identifies with a subgroup of $((\Omega_E /p) / (\Omega_E))^2$, that is, $E[p]^2$. From the definitions of $\Omega$ and $\Omega'$, the image of $\Omega'/p$ in $E[p] ^2$ is $((n_\Bcal n_\Ccal)/p) G_p = G_p$ (the prime-to-$p$ part of $\Omega$ is mapped to $\Omega_E ^2$ when multiplied by $n_\Bcal n_\Ccal/p$). A canonical $\Fp$-basis of $E[p] ^2$ is $\pi(e_1/p,e_2/p)$ and we now identify $E[p]$ to $\Fp^2$ with this choice of basis.

For $p \in \Bcal$, we choose a nonzero vector $(a,b) \in C_p$. Then, we fix $(c,d)$ such that $ad-bc = 1 \mod p $. Therefore, the vector $((a,b),(c,d))$ belongs to $G_p$ and is of determinant $1$ in the canonical basis  $\pi(e_1/p,e_2,p)$.

For $q \in \Ccal$, by Lemma \ref{lemmquadformdet}, we can choose $x \in E[q]$ such that $\det_{\pi(e_1/q,e_2,q)} (x,g_q \cdot x) = 1$. Therefore, the subgroup $G_q$ of $E[q] ^2$ contains an element of determinant 1 in the canonical basis $\pi(e_1/q,e_2,q)$.

We just proved that for every $p \in \Bcal$, there exists a matrix $\gamma_p = \matabcd \in \SL_2 ( \Fp)$ such that 
\[
 \matabcd \cdot \begin{pmatrix} \overline{e_1} \\ \overline{e_2} \end{pmatrix} \in \Omega' / (p \Omega_E)^2
 \] 
The specialisation morphism $\slz \ra \prod_{p \in \Bcal \cup \Ccal} \SL_2 ( \Z / p \Z)$ is known to be surjective (\cite{LangAlgebra}, Chapter XIII, Exercise 18)
 , therefore there exists $\gamma \in \slz$ such that 
 \[
 \gamma \cdot \begin{pmatrix} e_1 \\  e_2 \end{pmatrix} \in \Omega' + (n_\Bcal n_\Ccal) \Omega_E ^2 \subset \Omega',
 \]
 because $\Omega$ contains $\Omega_E ^2$. Such an element $\gamma \cdot (e_1,e_2)$ is by construction a basis of $\Omega_E$, therefore $\Omega'$ contains a basis of $\Omega_E$. This concludes the proof of the first inequality of the theorem.
 
 Now, this result implies the surjectivity theorem in the following way : take $p$ a prime number such that $\rep$ is not surjective, not dividing the discriminant of $K$. Then, the image of $\rep$ is included in a Borel subgroup, the normaliser of a Cartan subgroup, or an exceptional subgroup. In the Borel case, we obtain $p \leq 10^7 [K : \Q] ^2 \left(  \max \{ h_\Fcal (E), 985 \} + 4 \log [K : \Q] \right) ^2$. In the Cartan case, we obtain $p \leq 2 \cdot 10^{3.5} [K : \Q] \left(  \max \{ h_\Fcal (E), 985 \} + 4 \log [K : \Q] + 4 \log(2) \right)$. In the exceptional case, we have $p \leq 30 [K : \Q] + 1$ by Proposition \ref{exceptionalcase}. The maximum of these three bounds is obtained for the Borel case, and it gives us the effective surjectivity theorem.
\end{proof}

\begin{rem}
Some papers, such as \cite{Cojocaru05}, give other (nonexplicit) versions of Serre's surjectivity theorem, but based on the conductor $N_E$ of $E$.
Note however, that by \cite{Cojocaru05}, Theorem 3 (which assumes the degree conjecture...), our result implies that for $E$ defined over $\Q$, one would have surjectivity for $p \gg \log (N_E)^2$.
\end{rem}

This gives us our direct application for $\Q$-curves.

\begin{cor}
 Let $E$ be a $\Q$-curve without complex multiplication, of squarefree degree $d(E)$ over a quadratic field $K$.

If $\Prep$ is reducible for some prime $p$ not dividing $d$,
\[
 d(E)p \leq 10 ^7  [K : \Q]^2 ( \max \{ h_\Fcal (E), 985 \} + 4 \log [K : \Q] ) ^2
\]
If $\Prep$ has image in the normaliser of a Cartan subgroup for some prime $p$ not dividing $d$, 
\[
d(E)p^2 \leq 4 \cdot 10^7 [K : \Q]^2 (  \max \{ h_\Fcal (E), 985 \} + 4 \log (2[K : \Q])) ^2.
\]
\end{cor}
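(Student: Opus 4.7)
The plan is to apply Theorem \ref{Serreexplicite} directly to $E$ viewed as an elliptic curve over $K$, after extracting from the $\Q$-curve structure a Borel-type invariant at every prime dividing $d = d(E)$. The key preliminary step is to show that the kernel $C_d$ of a minimal isogeny $\mu : E \to E^\sigma$ of degree $d$ is stable under $\Gal(\overline{K}/K)$. Since both $E$ and $E^\sigma$ are defined over $K$, for every $\tau \in \Gal(\overline{K}/K)$ the conjugate $\mu^\tau$ again lies in $\Hom(E, E^\sigma)$ and has degree $d$. The no-CM hypothesis makes $\Hom(E, E^\sigma)$ a free $\Z$-module of rank one, so the two minimal-degree elements $\mu$ and $\mu^\tau$ must satisfy $\mu^\tau = \pm \mu$; hence $\tau(C_d) = \ker \mu^\tau = C_d$. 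Because $d$ is squarefree, $C_d$ decomposes as a direct sum $\bigoplus_{\ell \mid d} C_\ell$ with each $C_\ell$ a $K$-rational subgroup of order $\ell$ inside $E[\ell]$, so every prime $\ell \mid d$ belongs to the Borel set $\Bcal$ attached to $E/K$ in Theorem \ref{Serreexplicite}.

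Next I translate the hypothesis on $\Prep$ into one on $\rho_{E,p}$ over $K$: the restriction of $\Prep$ from $\GalQ$ to $\Gal(\overline{\Q}/K)$ is the projectivisation of $\rho_{E,p}$, so reducibility of $\Prep$ puts $p$ into $\Bcal$, while containment of the image of $\Prep$ in a Cartan normaliser puts $p$ into $\Ccal$. In the Borel case, one therefore has $\{p\} \cup \{\ell : \ell \mid d\} \subset \Bcal$ with $\Ccal = \varnothing$, and Theorem \ref{Serreexplicite} gives
\[
dp \;\leq\; \prod_{\ell \in \Bcal} \ell \;\leq\; 10^7 [K:\Q]^2 \bigl(\max\{h_\Fcal(E), 985\} + 4 \log[K:\Q]\bigr)^2,
\]
which is the first claim. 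In the Cartan case $|\Ccal| = 1$, producing the factor $p^2/4$ on the left and an additional $4\log 2$ inside the parenthesis on the right; absorbing that term into $4\log(2[K:\Q])$ and multiplying through by $4$ yields exactly
\[
d p^2 \;\leq\; 4 \cdot 10^7 [K:\Q]^2 \bigl(\max\{h_\Fcal(E), 985\} + 4 \log(2[K:\Q])\bigr)^2.
\]

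The only piece with any content is the $K$-rationality of $C_d$; the rest is bookkeeping on top of the explicit Serre bound. I expect no genuine obstacle beyond being careful that minimality of $\mu$ really forces $\mu^\tau = \pm \mu$ (it does, because $\deg \mu^\tau = \deg \mu$ and the Hom group is free of rank one) and noting that when $d = 1$ both inequalities degenerate to Theorem \ref{Serreexplicite} applied to a single prime, as they should.
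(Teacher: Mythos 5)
Your proposal is correct and supplies exactly the argument the paper leaves implicit (the Corollary is stated there without proof, immediately after Theorem \ref{Serreexplicite}, as a "direct application"). The one step requiring care — that every prime $\ell \mid d$ lies in the Borel set $\Bcal$ of Theorem \ref{Serreexplicite} for $E/K$ — you establish cleanly via the rank-one structure of $\Hom(E, E^\sigma)$ for non-CM $E$, forcing $\mu^\tau = \pm\mu$ and hence $\Gal(\overline{K}/K)$-stability of $C_d$ and of each $C_\ell$; this is equivalent to, and arguably tidier than, reading it off the $K$-rationality of the corresponding point on the coarse moduli space $X_0(d)$ as the paper's Section 1.1 would suggest. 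The bookkeeping — restriction of $\Prep$ to $\Gal(\overline{\Q}/K)$ giving $\P\rho_{E,p}$, the Borel case with $\Ccal = \varnothing$, and the Cartan case with $|\Ccal| = 1$ so that $4\log[K:\Q] + 4\log 2 = 4\log(2[K:\Q])$ and the factor $p^2/4$ becomes $p^2$ after multiplying by $4$ — is all correct.
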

 
We can now state the main theorem in detail. 

\begin{thm}
Let $K$ be an imaginary quadratic field of discriminant $-D_K$. For every strict $\Q$-curve $E$ defined over $K$ with degree $d(E)$ and every prime number $p$ coprime with $d(E)$,
 
 $\bullet$ If $p \geq 2 \cdot 10^{13}$, the representation $\Prep$ is not included in a Borel subgroup.
 
 $\bullet$ If $p \geq 10^7$ the representation $\Prep$ is not included in the normaliser of a split Cartan subgroup.
 
 $\bullet$ If $p \geq \max ( 10^7, 50 D_K^{1/4} \log(D_K))$ and does not divide $D_K$, the representation $\Prep$ is not included in the normaliser of a nonsplit Cartan subgroup.
 
 $\bullet$ If $p \geq 67$, the representation $\Prep$ is not included in an exceptional subgroup.
\end{thm}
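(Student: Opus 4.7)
The proof follows scheme $(IV)$ of the introduction. The exceptional case is immediate: with $[K:\Q]=2$, Proposition \ref{exceptionalcase} excludes exceptional image as soon as $p > 30[K:\Q]+1 = 61$, so $p\geq 67$ suffices. For the three non-exceptional cases, fix a strict $\Q$-curve $E$ of squarefree degree $d=d(E)\geq 2$ over $K$ and a prime $p$ coprime to $d$ (and to $D_K$ in the nonsplit Cartan case), assume that $\Prep$ has image in the relevant maximal subgroup, and derive a contradiction for $p$ above the announced threshold. The plan has three ingredients.

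First, invoke the appropriate potentially-good-reduction result (Proposition \ref{bonnereductionpartoutBorel}, \ref{BornebonnereductioncasCartandeploye}, or \ref{BonnerednonsplitCartan}) to conclude $j(E) \in \ok$; the small primes excluded by these propositions and the threshold $p > 50 D_K^{1/4}\log D_K$ in the nonsplit Cartan case are precisely the side conditions that reappear in the statement. Second, extract a Runge upper bound from the $\Q$-curve structure itself: since $d\geq 2$, fix a prime divisor $\ell$ of $d$; centrality of $E$ provides a $K$-rational cyclic subgroup of order $\ell$, and together with the integrality of $j(E)$ and of $j(E/C_\ell)$ this yields a point of $X_0(\ell)(\ok)$. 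Proposition \ref{Rungeborne} applied with the prime $\ell$ gives $\log |j(E)| < 2\pi\sqrt{\ell}+6\log\ell+8$; since $K$ is imaginary quadratic and $j(E)$ is integral one has $h(j(E)) = \log|j(E)|$, so \eqref{lienFaltingsWeil} converts this into
\[
h_\Fcal(E) \leq \frac{\pi\sqrt{\ell}}{6}+\frac{\log\ell}{2}+3.
\]

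Third, feed this bound into the corollary of Theorem \ref{Serreexplicite}, which specialises for $[K:\Q]=2$ to $dp \leq 4\cdot 10^{7}(\max\{h_\Fcal(E),985\}+4\log 2)^2$ in the Borel case and $dp^2 \leq 1.6\cdot 10^{8}(\max\{h_\Fcal(E),985\}+4\log 4)^2$ in the Cartan cases. A case split on the truncation at $985$ then closes the argument. If $h_\Fcal(E)\leq 985$, the right-hand sides are absolute constants and the inequality $d\geq 2$ already forces $p\leq 2\cdot 10^{13}$ in the Borel case and $p\leq 10^7$ in each Cartan case. If $h_\Fcal(E)>985$, the Runge estimate of the second step forces $\ell$, hence $d\geq\ell$, to be very large; substituting $h_\Fcal(E)\leq \pi\sqrt{\ell}/6+O(\log\ell)$ into the corollary and dividing by $d\geq\ell$ yields $p\ll 10^7$ in all three non-exceptional cases, well below the stated thresholds. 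The main obstacle is purely numerical: the announced bounds are essentially tight (in particular the Cartan threshold $10^7$ only just works because the square in $dp^2$ absorbs the factor $d\geq 2$), so the constants $985$ and $4\log[K:\Q]$ in the corollary, together with the $O(\log\ell)$ remainders from Runge, have to be tracked carefully so that the two sub-cases close up without leaving a gap.
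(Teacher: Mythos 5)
Your proof matches the paper's argument point by point: exceptional case via Proposition \ref{exceptionalcase}, potentially good reduction from the three Mazur-method propositions, Runge's bound applied to a prime divisor of $d(E)$, and the isogeny corollary with a case split at $985$, which the paper compresses into ``computing with the previous Corollary and the Runge bound above.'' One small slip: converting the Runge bound through \eqref{lienFaltingsWeil} gives $h_\Fcal(E) \leq \frac{\pi\sqrt{\ell}}{6}+\frac{\log\ell}{2}+\frac{11}{3}$, not $+3$ (you dropped the $8/12$ term), though this does not affect the conclusion since the margins are large enough.
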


\begin{proof}
The exceptional case is solved by Proposition \ref{exceptionalcase}. For the other cases, we use Propositions \ref{bonnereductionpartoutBorel}, \ref{BornebonnereductioncasCartandeploye} and \ref{BonnerednonsplitCartan} to know that with the given bounds, if $\Prep$ is included in one of the three types of maximal subgroups, $j(E) \in \ok$. Consider now $d_0$ the smallest prime divisor of $d$. We use Runge's method in $X_0 (d_0) ( \ok)$ (Proposition \ref{Rungeborne}) to obtain 
\[
 \log |j(E)| \leq 2 \pi \sqrt{d_0} + 6 \log (d_0) + 8.
\]
As $j(E) \in \ok$ and $K$ is imaginary quadratic, ${h_\Fcal (E) \leq  (\log |j(E)|)/12 + 3}$ by \eqref{lienFaltingsWeil}. Computing with the previous Corollary and the Runge bound above, we finally obtain the theorem. 
\end{proof}

Some comments are in order about the different steps of the proof here. First, about Mazur's method, we suspect that it can be used for any central $\Q$-curve over any number field in Borel and split Cartan cases, giving bounds depending only on the degree of this field (but maybe growing exponentially in this degree). Indeed, both proofs in theses cases mainly rely on the fact that $X_0(d) \ra X(1)$ is unramified at $\infty$ and ramified at 0, but it is actually ramified at \emph{every} cusp of $X_0(d)$ different from $\infty$, so we could make use of all Atkin-Lehner involutions group and obtain a formal immersion satisfying the good conditions. Furthermore, all the theoretical work is done for the analysis of components group by Proposition \ref{groupecomposantescasgeneral}, although a precise equivalent of Lemma \ref{lemcalculgP} for a general number field is likely to be hard to write. About the nonsplit Cartan case, it is not completely clear for now what happens, but Ellenberg's trick might be applied 
for some (but not all) bigger number fields.

Regarding Runge's method, because for a central $\Q$-curve of degree $d$, with $d$ having $r$ prime factors, the number of rational cusps of $X_0 (d)$ is $2^r$, which is also the expected degree of the field of definition $K$ of $E$. Therefore, for Runge's condition to hold, we need $K$ not to be a totally real number field. The exponent on $d$ we should expect to bound $\log |j(E)|$ is not known for now (but it may be $1/2$ as in the quadratic case).

We would also like to point out that up until now, if we know about infinite families of quadratic $\Q$-curves of degrees 2,3,5,7 or 11 over quadratic fields (see for instance \cite{Hasegawa97} for parametrisations), examples on bigger fields turn out to be more rare. Of course, for reasons of genus, when the degree $d$ is too large, there is only a finite number of $\Q$-curves of degree $d$ but the limit has not been precisely computed yet.

\section{\texorpdfstring{Appendix : Analytic estimates of weighted sums of $L$-functions}{Appendix : Analytic estimates of weighted sums of L-functions}}

The aim of this appendix is to prove Proposition \ref{BonnerednonsplitCartan}. We fix an imaginary quadratic field $K$ of discriminant $-D$ and $\chi_K$ (often shortened in $\chi$) its quadratic character. Thanks to Proposition \ref{propEllnonsplitcartan} (we use the same notations), we only have to find, for every $p > 50 D^{1/4} \log(D)$ not dividing $D$, an eigenform $f \in S_2 ( \G_0 (p))^{new}$ such that $w_p \cdot f = f$ and $L( f \otimes \chi_K, 1) \neq 0$.

For every positive integers $m,N$ and every vector space $V \subset S_2(\G_0 (N))$, we denote by $a_m$ and $L_\chi$ the  linear forms defined on $V$ by
\[
 a_m : f \mt a_m (f) , \quad L_\chi :  f \mt L ( f \otimes \chi_K,1).
\]
We recall that $S_2 ( \G_0 (N))^+$ (resp. $S_2 ( \G_0 (N))^{-})$) is the space of modular forms of $S_2 ( \G_0 (N))$ such that $w_N \cdot f = f$ (resp. $w_N \cdot f = -f$). We also denote by $S_2 ( \G_0 (N))^{old}$ (resp. $S_2 ( \G_0 (N))^{new}$) the space of old (resp. new) modular forms of $S_2 ( \G_0 (N))$.
The scalar product of $a_m$ and $L_\chi$ on a vector space $V \subset S_2 ( \G_0 (N))$ is 
\[
 (a_m,L_\chi)_V : = \sum_{f \in \Fcal_V} \overline{a_m (f)} L ( f \otimes \chi_K,1),
\]
where $\Fcal_V$ is any Petersson-orthonormal basis of $V$.
This will be shortened in $(a_m,L_\chi)_N$ (resp. $(a_m,L_\chi)_N^{new}$, $(a_m,L_\chi)_N^{old}$, $(a_m,L_\chi)_N^{+,new}$, $(a_m,L_\chi)_N^{+}$, $(a_m,L_\chi)_N^{-}$) for $V= S_2 ( \G_0 (N))$ (resp. 

$S_2 ( \G_0 (N))^{new}$, $S_2 ( \G_0 (N))^{old}$, $S_2 ( \G_0 (N))^{+,new}$, $S_2 ( \G_0 (N))^{+}$, $S_2 ( \G_0 (N))^{-}$), and similarly with $(a_m,a_n)_V$. We will prove the following result which immediately implies Proposition \ref{BonnerednonsplitCartan}.
\begin{prop}
 \label{nonnulliteweightedsumL}
For every imaginary quadratic field $K$ with discriminant $-D$ and Dirichlet character $\chi$, and every prime number $p >50 D ^{1/4} \log(D)$ not dividing $D$, 
\[
 (a_1, L_\chi)_{p^2} ^{+,new} \neq 0.
\]
\end{prop}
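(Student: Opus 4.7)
The plan follows the approach of Ellenberg (\cite{Ellenberg04}, Proposition 3.6) but with sharper analytic input: the improvement from a bound $p \gtrsim D^{1/2}$ (implicit in \emph{loc.\ cit.}) to $p > 50 D^{1/4} \log D$ comes from a properly balanced application of the approximate functional equation for the twisted $L$-function, whose analytic conductor has size $p^2 D$.

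\emph{Step 1: spectral expansion.} For $f \in S_2(\G_0(p^2))^{+,new}$, the approximate functional equation allows one to write
\[
 L(f \otimes \chi_K, 1) = \sum_{n \geq 1} \chi_K(n) \frac{a_n(f)}{n} V\!\left(\frac{n}{p\sqrt{D}}\right) + \text{(conjugate half)},
\]
where $V$ is a smooth rapidly decaying cutoff function; the sign of the functional equation is $+1$ because $f$ lies in the $+$-subspace and $\chi_K$ is trivial at $p$ (using $(p, D) = 1$). Interchanging summations, $(a_1, L_\chi)^{+,new}_{p^2}$ becomes
\[
 \sum_{n \geq 1} \frac{\chi_K(n)}{n}\, V(n/(p\sqrt{D}))\, (a_1, a_n)^{+,new}_{p^2} \;+\; \text{symmetric term}.
\]

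\emph{Step 2: Petersson formula and projections.} The scalar product $(a_1, a_n)^{+,new}_{p^2}$ is extracted from the full Petersson product on $S_2(\G_0(p^2))$ by combining Möbius inversion for the new/old decomposition (greatly simplified here since $S_2(\G_0(1)) = 0$, so only level-$p$ oldforms intervene) with the projector onto the $+$-subspace. The action of the Atkin--Lehner involution on the coefficients $a_n$ is explicit in terms of Hecke eigenvalues. Applying the Petersson trace formula then yields a main diagonal term proportional to $\delta_{1,n}$ plus an off-diagonal sum of Kloosterman terms $S(1, n; c)\, J_1(4\pi\sqrt{n}/c)/c$, summed over positive $c$ divisible by $p^2$.

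\emph{Step 3: main term versus error.} The diagonal gives, at $n = 1$, an explicit positive main term $M$ which, using effective lower bounds on $\dim S_2(\G_0(p^2))^{+,new}$, satisfies $M \gg 1/\log p$. The off-diagonal is controlled by Weil's bound $|S(1, n; c)| \leq \tau(c)\, (n, c)^{1/2} c^{1/2}$ and the standard estimate $J_1(x) \ll \min(x, x^{-1/2})$; combined with the restriction $p^2 \mid c$ and summation over $n$ up to $X^{1+\varepsilon}$ with $X = p\sqrt{D}$, the total error (including the symmetric half) is bounded by $O(D^{1/2} (\log pD)^A / p)$ for some explicit exponent $A$. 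The main term strictly dominates as soon as $p > 50\, D^{1/4} \log D$, giving the non-vanishing.

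\emph{Main obstacle.} The delicate part is making every constant fully explicit: one must produce an effective lower bound on $M$ (requiring explicit control of the interplay between the $+$-projection and the new-projection, together with a concrete lower bound on $\dim S_2(\G_0(p^2))^{+,new}$), and an effective Weil-type bound with careful tracking of every logarithmic factor in the error. The exponent $1/4$ on $D$, rather than $1/2$, crucially rests on the balanced cutoff length $X = p\sqrt{D}$, i.e.\ the square root of the analytic conductor $p^2 D$, which is what distinguishes our estimate from that of \cite{Ellenberg04}.
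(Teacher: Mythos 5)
Your overall skeleton — integral representation for $L(f\otimes\chi,1)$, Petersson/Akbary trace formula, new/old and Atkin--Lehner projections, main term versus off-diagonal — is indeed the framework the paper uses. But there are several substantive errors, and the one piece that actually produces the exponent $1/4$ is missing.

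First, your ``balanced cutoff length $X = p\sqrt{D}$'' is wrong. The twist $f\otimes\chi$ of a newform $f\in S_2(\Gamma_0(p^2))$ by a primitive quadratic character of conductor $D$ with $(p,D)=1$ has conductor $D^2 p^2$, not $Dp^2$. The paper's integral representation \eqref{equationlfuntwist} accordingly has exponential cutoff at scale $D\sqrt{N}$, i.e.\ $Dp$ when $N=p^2$. There is no shortening of the sum to length $p\sqrt{D}$, and the claim that this is what ``distinguishes our estimate from Ellenberg'' is not the mechanism that works.

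Second, and more seriously, your error bound $O\bigl(D^{1/2}(\log pD)^A / p\bigr)$ against a main term $\gg 1/\log p$ would only give $p\gg D^{1/2}(\log pD)^{A+1}$, i.e.\ an exponent $1/2$ on $D$, not $1/4$. The paper instead obtains an error of order $\sqrt{D}\,\log^2(Dp)/p^2$ (note the extra factor of $p$ in the denominator), against a main term which is an explicit numerical constant (essentially $19/20$ after normalising by $4\pi$) rather than something decaying like $1/\log p$. It is $\sqrt{D}/p^2 \ll 1$ that yields $p\gg D^{1/4}\log D$. The main term in the paper comes directly from the $\delta_{mn}$ diagonal of the trace formula and requires no dimension lower bounds.

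Third, the key new ingredient you have no analogue for is the paper's Lemma~\ref{BornessommesKloostermanDirichlet}: a P\'olya--Vinogradov-type estimate
\[
\sup_{K,K'}\Bigl|\sum_{n=K}^{K'} \chi(n)\,S(m,n;c)\Bigr| \leq \frac{4c\sqrt{D}}{\pi^2}\bigl(\log(Dc)+1.5\bigr),
\]
which is fed through an Abel summation against the Bessel weight $J_1(4\pi\sqrt{mn}/c)e^{-nx}/\sqrt{n}$. This is exactly what converts a naive Weil bound of order $D$ (after summing over $n$) into $\sqrt{D}\cdot\log$, and is the reason the final dependence on $D$ is $D^{1/4}$ rather than $D^{1/2}$. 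Relying only on Weil's bound together with $J_1(x)\ll\min(x,x^{-1/2})$, as you propose in Step 3, does not exploit the sign cancellation in $\chi(n)$ at all and cannot reach $D^{1/4}$.

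Finally, the paper uses a clean exact identity \eqref{formulea1chip2old} from Lemma~3.12 of \cite{Ellenberg04} to reduce $(a_1,L_\chi)_{p^2}^{+,\mathrm{new}}$ to three quantities $(a_1,L_\chi)_{p^2}$, $(a_1,L_\chi)_p$, $(a_p,L_\chi)_p$, and treats each by Akbary's trace formula (which directly isolates the $\pm$ eigenspaces of $w_N$). Your M\"obius-inversion description of the new/old reduction is vaguer but not incorrect in spirit; the genuine gaps are the three points above.
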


For any positive integer $M$ and any eigenform $g \in S_2 ( \G_0 (M))$, the $L$-function of $g$ admits a meromorphic continuation over $\C$ and 
\begin{equation}
\label{lfuntwistintegrale}
 L(g,1) = \left\{ \begin{array}{lcc}
4 \pi \int_{1/\sqrt{M}}^{+ \infty} g (iu) du & {\rm{ if }} & g_{|w_M} = -g. \\
0 & {\rm{ if }} & g_{|w_M} = g
                  \end{array}
\right.
\end{equation}
Moreover, if $f$ is a modular form in $S_2 (\G_0(N))$ with $N$ prime to $D$, $f \otimes \chi_K \in S_2 ( \G_0 ( D^2 N))$ and
\begin{equation}
\label{eqfoncftwistchi}
 (f \otimes \chi)_{|w_{D^2 N}} = \chi ( -N) (f_{|w_N} \otimes \chi)
\end{equation}
(for more details, see (\cite{Bump}, $\mathsection$ $I.5$)).
Hence, for an eigenform $f \in S_2 ( \G_0 (p^2))^{+}$, the sign of the functional equation of $L(f \otimes \chi)$  is $- \chi ( - p ^2)= 1$ because $\chi(-1)=-1$ : this is where we need $K$ to be imaginary to ensure that the twisting actually changes the sign of the functional equation for $L$. In the real quadratic case, we expect from Birch and Swinnerton-Dyer conjecture that there is no nontrivial rank zero quotient of the jacobian of $X_ {\textrm{nonsplit}} (p^2)$, because $J (X_ {\textrm{nonsplit}} (p^2))$ is isogenous over $\Q$ to $J_0 ({p^2})^{+,new}$ \cite{DeSmitEdixhoven}. In our situation, twisting reverts the sign of the functional equation, so $(a_1, L_\chi)_ {p^2}^{-}= 0$ and
\begin{equation}
\label{formulea1chip2old}
(a_1,L_\chi)_{p^2}^{new}=(a_1,L_\chi)_{p ^2} ^{+,new} = (a_1,L_\chi) _{p^2} - \frac{p}{p^2-1} (a_1,L_\chi)_p + \frac{\chi(p)}{p^2-1} (a_p,L_\chi)_p
\end{equation}
from Lemma 3.12 of \cite{Ellenberg04} and its proof.
The idea is that $(a_1, L_\chi)_ {p^2}$ is close to $4 \pi$ in modulus when $p$ is large enough compared to $D$, whereas the remaining term is close to 0.
From \eqref{formulea1chip2old}, we only need to estimate $(a_m, L_\chi)_N$ for $m=1,N=p^2$ (case (1)), $m=1, N=p$ (case (2)) and $m=N=p$ (case (3)) with $p \geq 7$ prime not dividing $D$ (the reader should mind throughout the estimates).

For an estimate of $(a_m,L_\chi)_N$, we will use Petersson's trace formula (\cite{IwaniecKowalski}, Proposition 14.5)
in a restricted version of Akbary (\cite{Akbary}, Theorem 3). For this, we define for every integers $m,n,c$ the Kloosterman sum
\[
 S(m,n;c) := \sum_{v \in (\Z/c\Z) ^*} e^{ 2 i \pi (m v + n \overline{v})/c} \,
\]
where for every $v \in (\Z / c \Z)^*$, $\overline{v}$ is the inverse of $v$.

Next proposition is a reformulation of Akbary's trace formula in weight 2 case.
\begin{prop} Let $m,n,N$ be three positive integers and $\varepsilon = \pm 1$. We have
\begin{eqnarray*}
 \frac{1}{2 \pi \sqrt{mn}} (a_m,a_n)_N ^+ &  = \delta_{mn} & \! \! \! \! \!  - 2 \pi \sum_{\substack{c >0 \\ N |c}} c^{-1} S(m,n ; c) J_1 \left( \frac{4 \pi \sqrt{mn}}{c} \right) \\ & & \! \! \! \! \! - \frac{2 \pi \varepsilon}{\sqrt{N}} \sum_{\substack{d > 0 \\ (d,N)= 1}} d^{-1} S(n,mN ^{\phi(d)-1}  ; d) J_1 \left( \frac{4 \pi \sqrt{mn}}{d \sqrt{N}} \right),
\end{eqnarray*}
where $\delta$ is the Kronecker symbol, $J_1$ is the Bessel function of the first kind of order 1, $(d,N)$ is the greatest common divisor of $d$ and $N$ and $\phi$ is the Euler totient function.
\end{prop}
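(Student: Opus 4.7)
The plan is to derive this identity as the classical Petersson trace formula on $S_2(\G_0(N))$, projected onto the $w_N$-eigenspace labeled by $\varepsilon$. (I read the ``$+$'' on the left-hand side as $\varepsilon$, consistent with the sign appearing on the right.) The argument proceeds in three steps, centered on comparing two Petersson products between Poincar\'e series.

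First, set up the weight $2$ Poincar\'e series $P_m^{(N)}$ of level $N$ attached to the cusp at infinity, normalized so that $(f, P_m^{(N)}) = c_m\, a_m(f)$ for every $f \in S_2(\G_0(N))$ and an explicit constant $c_m$. Expanding $(P_m^{(N)}, P_n^{(N)})$ spectrally against an orthonormal basis $\Fcal$ gives, up to the factor $c_m \overline{c_n}$, the sum $\sum_{f \in \Fcal} \overline{a_m(f)} a_n(f)$. Expanding it geometrically by unfolding against the Bruhat decomposition of $\G_\infty \backslash \G_0(N) / \G_\infty$ yields the classical Petersson formula: the identity double coset contributes $\delta_{mn}$, while non-trivial cosets, indexed by the bottom-left entry $c > 0$ with $N \mid c$, contribute the Kloosterman sum $S(m, n; c)$ together with the Bessel factor $J_1(4 \pi \sqrt{mn}/c)$ arising from the archimedean integral $\int_0^\infty e^{-2 \pi (x + 1/x)} x^{-1}\, dx$.

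Next, perform the analogous computation for the twisted pairing $(w_N P_m^{(N)}, P_n^{(N)})$. On the spectral side, since $w_N$ is unitary and involutive, one may take $\Fcal$ to consist of $w_N$-eigenforms, and the pairing then equals $c_m \overline{c_n} \sum_{f \in \Fcal} \varepsilon_f \overline{a_m(f)} a_n(f)$, where $\varepsilon_f \in \{\pm 1\}$ is the $w_N$-eigenvalue of $f$. On the geometric side, $w_N P_m^{(N)}$ is itself a Poincar\'e-type series whose defining sum runs over $\G_\infty \backslash w_N \G_0(N)$. Writing $w_N = \bigl( \begin{smallmatrix} 0 & -1 \\ N & 0 \end{smallmatrix} \bigr)$, the double cosets $\G_\infty \backslash w_N \G_0(N) / \G_\infty$ are parametrized by integers $d > 0$ with $(d, N) = 1$; unfolding in Bruhat coordinates produces an exponential sum of Kloosterman type whose second entry is $m$ times the inverse of $N$ modulo $d$. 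Rewriting this inverse as $N^{\phi(d)-1} \bmod d$ via Euler's theorem gives precisely the Kloosterman sum $S(n, m N^{\phi(d)-1}; d)$, and the determinant $\det w_N = N$ in the weight-$2$ transformation law produces both the prefactor $1/\sqrt{N}$ and the rescaled Bessel argument $4\pi \sqrt{mn}/(d\sqrt{N})$.

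Finally, combine the two calculations. The orthogonal projector onto $S_2(\G_0(N))^{\varepsilon}$ is $(1 + \varepsilon w_N)/2$, so the restricted Petersson sum is a linear combination of the two products computed above with weights $1/2$ and $\varepsilon/2$. Adding them yields the stated identity (with the constants absorbed into the displayed normalization). The main obstacle is the coset analysis in the twisted step: one must verify that distinct $d > 0$ coprime to $N$ give distinct double cosets of $\G_\infty \backslash w_N \G_0(N) / \G_\infty$ and that the unfolded exponential sum reorganizes into $S(n, m N^{\phi(d)-1}; d)$ after Euler--Fermat, rather than into a sum with a different second argument such as $m N$ or $m \overline{N}$. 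Everything else --- the Bessel integration, the absolute convergence of both Kloosterman--Bessel sums via the classical bounds $|S(m,n;c)| \leq (m,n,c)^{1/2} c^{1/2} \tau(c)$ and $J_1(x) = O(\min(x, x^{-1/2}))$, and the bookkeeping of the normalizing constant $c_m$ --- is routine and parallels the classical derivation step for step.
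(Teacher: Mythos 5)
Note first that the paper does not prove this proposition at all: it is cited directly from Akbary (\cite{Akbary}, Theorem~3), with the text saying only that the displayed statement is ``a reformulation of Akbary's trace formula in weight 2.'' So there is no in-paper proof to compare against, and your reading of the superscript ``$+$'' as ``$\varepsilon$'' is correct (a typo; the sentence after the proposition, about summing over $\varepsilon=\pm 1$ to recover Petersson's formula, confirms it). Your sketch is the standard derivation and, as far as I can tell, matches Akbary's route: pair a weight-2 Poincar\'e series with itself to recover Petersson's formula, then compute the twisted pairing $(w_N P_m,P_n)$ geometrically to obtain the second Kloosterman--Bessel sum indexed by $d$ coprime to $N$ (coming from the Bruhat decomposition of $\G_\infty\backslash w_N\G_0(N)/\G_\infty$, with the lower-left parameter $a$ of $\gamma=\bigl(\begin{smallmatrix}a&b\\c&d\end{smallmatrix}\bigr)$ playing the role of $d$, and with the inverse of $N$ mod $d$ rewritten as $N^{\phi(d)-1}$ via Euler--Fermat), and finally apply the projector $\tfrac12(1+\varepsilon w_N)$. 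Your normalization and sign bookkeeping check out: halving the Petersson term absorbs the $\tfrac{1}{2\pi\sqrt{mn}}$ vs.\ $\tfrac{1}{4\pi\sqrt{mn}}$ discrepancy, and the determinant $N$ of $w_N$ produces both the $1/\sqrt{N}$ prefactor and the rescaled Bessel argument. The only caveat is that your write-up is a plan rather than a complete proof --- you explicitly flag the coset analysis (that each $d>0$ with $(d,N)=1$ gives a distinct double coset, and that the unfolded exponential sum becomes exactly $S(n,mN^{\phi(d)-1};d)$) as ``the main obstacle'' and do not carry it out. That verification is the content of Akbary's argument and is not optional, so as submitted the proposal is an accurate and correctly-oriented outline, not a self-contained proof.
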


Notice that summing Akbary's formula for $\varepsilon=1$ and $-1$ gives Petersson's trace formula.
As a normalized eigenform $f \in S_2 ( \G_0 (N))$ contributes for zero in $(a_m,L_\chi)$ if $f_{|w_N} = \chi ( -N) f$ by \eqref{eqfoncftwistchi}, $(a_m,L_\chi)_N = (a_m,L_\chi)_N ^\varepsilon$ with $\varepsilon = - \chi ( -N) = \chi(N)$. For every $f \in S_2 ( \G_0 (N))^{\varepsilon}$, we have from \eqref{lfuntwistintegrale}
\begin{equation}
\label{equationlfuntwist}
 L_\chi (f) =4 \pi \int_{1/ (D \sqrt{N})}^ {+ \infty} (f \otimes \chi) ( iu ) du  = 2 \sum_{n=1} ^{+ \infty} \frac{a_n (f) \chi(n)}{n} e^{- \frac{2 \pi n}{D \sqrt{N}}} 
\end{equation}
For convenience,  we use throughout this section the notation 
\[
 x = \frac{2 \pi }{D \sqrt{N}}.
\]
From \eqref{equationlfuntwist}, we get  $(a_m,L \chi)_N = 2 \sum_{n=1} ^{+ \infty} \frac{\chi(n)}{n} e^{-nx} (a_m,a_n)_N ^{\varepsilon}$ hence by Akbary's formula
\begin{equation}
\label{formuleamlchinamchibmchi}
(a_m,L \chi)_N = 4 \pi \chi (m) e ^{ - m x} - 8 \pi ^2 \sqrt{m} \left(  A(m,\chi,N) + \frac{ \varepsilon}{\sqrt{N}} B(m,\chi,N) \right)
\end{equation}
with
\begin{eqnarray*}
A(m,\chi,N) & := &  \sum_{n=1} ^{+ \infty} \frac{\chi(n)}{\sqrt{n}} e ^{- n x} \sum_{\substack{c >0 \\ N |c}} c^{-1} S(m,n ; c) J_1 \left( \frac{4 \pi \sqrt{mn}}{c} \right) \\
B(m, \chi,N) & := & \sum_{n=1} ^{+ \infty} \frac{\chi(n)}{\sqrt{n}}  e ^{- n x} \sum_{\substack{d > 0 \\ (d,N)= 1}} d^{-1} S(n,m N ^{\phi(d)-1} ; d) J_1 \left( \frac{4 \pi \sqrt{mn}}{d \sqrt{N}} \right)
\end{eqnarray*}
This is where our approach starts to differ from \cite{Ellenberg05} : this exact formula allows better results in general.
It can be readily checked with help of the Weil bounds (Proposition \ref{BornesWeilKloosterman}) and the classical bounds for Bessel functions that these double sums converge absolutely, so we can switch the terms. We obtain 
\begin{equation}
 \label{expressionAmchiN}
  \! \! \! \! \! \! \! \! \! \! \! \! \! \! \! \! \! \! \! \! \! \! \! \! \! \! \! \! \! \!A(m,\chi,N)  =  \sum_{\substack{c >0 \\ N |c}} \frac{\Scal_A (c)}{c}  \quad {\textrm{with}} \quad \Scal_A(c)  =   \sum_{n=1}^{+ \infty} \frac{\chi(n)}{\sqrt{n}} S (m,n ; c) J_1 \left( \frac{4 \pi \sqrt{mn}}{c} \right) e^{- n x}.
\end{equation}
\begin{equation}
\label{expressionBmchiN}
 B(m, \chi,N)  =  \! \! \! \! \sum_{\substack{d >0 \\ (d,N)=1}} \! \!  \! \! \! \frac{\Scal_B(d)}{d}\quad {\textrm{with}} \quad  \Scal_B (d)  =  \sum_{n=1}^{+ \infty} \frac{\chi(n)}{\sqrt{n}} S (n,m N^{\phi(d) -1} ;d) J_1 \left( \frac{4 \pi \sqrt{mn}}{d \sqrt{N}} \right) e^{- n x}.
\end{equation}

These sums are the ones we are going to bound in two different ways, for every $N|c$ and every $d$ prime to $N$. 
Let us recall the Weil bounds on Kloosterman sums.
\begin{prop}
\label{BornesWeilKloosterman}
 For every positive integers $m,n,c$, we have 
\[
| S (m,n ;c) |\leq (m,n,c) ^{1/2} \tau(c) \sqrt{c}
\]
with $\tau(c)$ the number of positive divisors of $c$. Furthermore, if for an odd prime $p$, we have $c = p^\alpha c'$ with $(p,c')=1$ and $\alpha \geq 1$,
\[
 | S (m,n ;c) | \leq 2 \tau (c') (m,n,c)^{1/2} \sqrt{c}
\]
and the latter bound is replaced by $ \tau(c') (m,n,c)^{1/2} \sqrt{c'}$ when $p$ divides $m$ but not $n$ (or the reverse) and $\tau(c/p) (m,n,c)^{1/2} \sqrt{c}$ when $p$ divides $m$ and $n$.
\end{prop}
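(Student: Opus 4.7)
The plan is to reduce to prime power moduli via the twisted multiplicativity of Kloosterman sums, and then handle the prime and prime-power cases separately. Specifically, if $c = c_1 c_2$ with $(c_1, c_2) = 1$, a Chinese Remainder Theorem substitution $v \equiv v_1 c_2 \overline{c_2} + v_2 c_1 \overline{c_1} \pmod{c_1 c_2}$ (where the bars denote modular inverses in the appropriate quotient) yields the factorisation
\[
S(m,n;c_1 c_2) = S(m, n \overline{c_2}^{\,2}; c_1)\, S(m, n \overline{c_1}^{\,2}; c_2).
\]
Both factors $(m,n,c)^{1/2}$ and $\tau(c)$ are multiplicative in $c$, so it suffices to prove the bounds for $c = p^\alpha$ a prime power.

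For $c = p$ prime, this is exactly Weil's bound $|S(m,n;p)| \leq 2 (m,n,p)^{1/2} \sqrt{p}$, which is a consequence of the Riemann hypothesis for curves over finite fields (the cases where $p \mid (m,n)$ reduce to a Ramanujan sum and are trivial). This is the deepest input and the main obstacle, but it can simply be quoted from \cite{IwaniecKowalski}. For $c = p^\alpha$ with $\alpha \geq 2$ and $p$ odd, one proceeds by elementary ``$p$-adic stationary phase'': write $v = v_0 + p^{\beta} w$ with $\beta = \lceil \alpha/2 \rceil$, so that modulo $p^{\alpha}$ one has $\overline{v} \equiv \overline{v_0} - \overline{v_0}^{\,2} p^{\beta} w \pmod{p^{\alpha}}$ (using $\alpha \leq 2\beta$). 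The inner sum over $w$ is then a character sum that vanishes unless $v_0$ satisfies the quadratic congruence $m \equiv n \overline{v_0}^{\,2} \pmod{p^{\alpha - \beta}}$. Since $p$ is odd, this has at most $2$ solutions modulo $p^{\alpha-\beta}$, giving the announced bound $|S(m,n;p^\alpha)| \leq 2 (m,n,p^\alpha)^{1/2} \sqrt{p^\alpha}$ in the generic case.

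The refined bounds correspond to the arithmetic of this quadratic congruence. When $p$ divides exactly one of $m$ and $n$, the congruence $m v_0^2 \equiv n \pmod{p^{\alpha - \beta}}$ has no solutions with $v_0$ a unit unless one pulls out the common factor of $p$, which saves $\sqrt{p}$ in the bound and removes the factor of $2$ (since the unique remaining solution corresponds to a linear rather than quadratic constraint), yielding $\tau(c') (m,n,c)^{1/2}\sqrt{c'}$. When $p$ divides both $m$ and $n$, one factors out $p$ from both variables and reduces to a Kloosterman sum of modulus $p^{\alpha-1}$, which explains the $\tau(c/p)$ factor. Combining through the multiplicativity above, the divisor function $\tau(c')$ (or its variants) arises precisely from the number of ways to distribute the common divisor $(m,n,c)$ across the coprime factors of $c$; once one has written down the prime-power estimates with their explicit constants, the global bound follows by a routine accounting.
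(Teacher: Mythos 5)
Your overall strategy is the same as the paper's: reduce to prime-power moduli via (twisted) multiplicativity, invoke Weil's bound for $c=p$, and treat $c=p^\alpha$ with $\alpha\geq 2$ by an elementary argument, with the refined bounds coming from the arithmetic of the resulting congruence. The paper simply cites \cite{IwaniecKowalski} (3.2)--(3.3) for the Ramanujan-sum degenerations, Corollary 11.12 for $\alpha\geq 2$, and Theorem 11.11 for $\alpha=1$, so in spirit the proofs coincide.

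However, the details you supply for the $\alpha\geq 2$ stationary-phase step are not complete. When $\alpha$ is odd (so $\beta=\lceil\alpha/2\rceil=(\alpha+1)/2$), counting the at most two solutions $v_0 \bmod p^{\alpha-\beta}$ and multiplying by the $p^{\alpha-\beta}$ coming from the $w$-sum gives only $|S(m,n;p^\alpha)|\leq 2p^{2\beta-\alpha}\cdot p^{\alpha-\beta}=2p^{\beta}=2\sqrt{p}\cdot\sqrt{p^\alpha}$, which is off by a factor $\sqrt{p}$. For odd $\alpha$ one must actually evaluate the residual sum over $v_0$ near each stationary point, which is a quadratic Gauss sum of modulus $p$ and recovers the missing $\sqrt{p}$. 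Your sketch treats the stationary points as a pure count and thus does not establish the stated bound in the odd case; this is precisely the content of \cite{IwaniecKowalski}, Corollary 11.12 that the paper defers to. Likewise, the explanation of the case where $p$ divides exactly one of $m,n$ is confused: there is no ``common factor of $p$'' to pull out, and what actually happens is that the stationary-phase congruence $m\equiv n\overline{v_0}^{\,2}\pmod{p^{\alpha-\beta}}$ has no unit solutions, so $S(m,n;p^\alpha)=0$ for $\alpha\geq 2$, while for $\alpha=1$ it is a Ramanujan sum of absolute value $1$. That is where the factor $\sqrt{c'}$ in place of $\sqrt{c}$ comes from. The conclusions you reach are correct, but the justifications for the refined bounds as written do not quite say why they hold.
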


\begin{proof}
 By multiplicativity of Kloosterman sums, this only needs to be checked for $c=p^\alpha$. If $p|nm$, this boils down to Ramanujan sums (\cite{IwaniecKowalski}, (3.2) and (3.3)). If $\alpha \geq 2$, there is an elementary proof (\cite{IwaniecKowalski}, Corollary 11.12), and if $\alpha=1$ and $(p,mn)=1$, this is a famous result of Weil (\cite{IwaniecKowalski}, Theorem 11.11).
\end{proof}

\begin{rem}
What we gain from the second bound is $2 \tau (c')$ instead of $\tau(c)$, which will be an advantage
for the case $(1)$ ($m=1$, $N=p^2$) and the third and fourth bounds will be of use in case $(3)$ ($m=N=p$) for the next proposition.
\end{rem}

\begin{prop}
For every $N|c$ and every $d$ prime to $N$,
\[
|\Scal_A(c)|  \leq  \frac{2 D \sqrt{N} \tau (c/N)}{\sqrt{c}}, \qquad 
|\Scal_B(d)|  \leq  \frac{D \sqrt{m} \tau(d)}{\sqrt{d}}
\]
in cases $(1)$, $(2)$ and $(3)$. These bounds will be called ``Weil-induced bounds''.
\end{prop}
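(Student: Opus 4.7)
The plan is to bound each summand of $\Scal_A(c)$ and $\Scal_B(d)$ using three ingredients: the Weil-type estimates on Kloosterman sums from the previous Proposition, the elementary inequality $|J_1(y)| \leq y/2$ valid for all $y \geq 0$, and the geometric-series bound $\sum_{n \geq 1} e^{-nx} \leq 1/x$. Since $x = 2\pi/(D\sqrt{N})$, this last ingredient contributes a factor $D\sqrt{N}/(2\pi)$ that cancels against the $2\pi$ produced by the Bessel estimate, leaving the constants stated in the proposition.

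First I would treat $\Scal_B(d)$: because $(d,N)=1$, no sharpening of the basic Weil bound is available and one only has $|S(n,mN^{\phi(d)-1};d)| \leq \tau(d)\sqrt{d}$. Combining this with $|J_1(4\pi\sqrt{mn}/(d\sqrt{N}))| \leq 2\pi\sqrt{mn}/(d\sqrt{N})$ makes the $\sqrt{n}$ factor cancel the $1/\sqrt{n}$ already present in the summand, after which summing $\sum_n e^{-nx}$ produces the claimed bound $D\sqrt{m}\tau(d)/\sqrt{d}$.

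For $\Scal_A(c)$, I write $c = p^\alpha c'$ with $(p,c')=1$; since $N$ is a power of $p$ dividing $c$ one has $\alpha \geq 1$ (and $\alpha \geq 2$ in case $(1)$). In cases $(1)$ and $(2)$ where $m=1$, the condition $(m,n,c)=1$ holds and the refined Weil bound gives $|S(1,n;c)| \leq 2\tau(c')\sqrt{c}$; running the same mechanical calculation as for $\Scal_B$ yields $|\Scal_A(c)| \leq 2D\sqrt{N}\tau(c')/\sqrt{c}$. Since $c/N = p^{\alpha - v_p(N)} c'$, the inequality $\tau(c') \leq \tau(c/N)$ is immediate, which finishes these two cases.

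The hard part will be case $(3)$, where $m=N=p$, so $p \mid m$ and the uniform bound $|S(p,n;c)| \leq 2\tau(c')\sqrt{c}$ would lose a spurious factor $\sqrt{p}$. To recover it I split the series according to whether $p \nmid n$ or $p \mid n$. On the coprime part I apply the sharper bound $|S(p,n;c)| \leq \tau(c')\sqrt{c'} = \tau(c')\sqrt{c}/p^{\alpha/2}$, which saves the missing $\sqrt{p}$ directly (since $\alpha \geq 1$). On the part where $p \mid n$, one has $(p,n,c) = p$, so $|S(p,n;c)| \leq \tau(c/p)\sqrt{p}\sqrt{c}$; however, the geometric sum is now restricted to multiples of $p$, and the bound $\sum_{k \geq 1} e^{-pkx} \leq 1/(px)$ supplies the compensating factor $1/p$. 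Each of the two subsums is then bounded by $D\sqrt{p}\tau(c/p)/\sqrt{c}$ (using $\tau(c') \leq \tau(c/p)$), and adding them yields the factor $2$ appearing in the stated bound. The only conceptual subtlety is this case-$(3)$ splitting; the rest is bookkeeping of divisor-function inequalities.
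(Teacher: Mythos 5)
Your proposal is correct and follows essentially the same route as the paper: bound $|J_1(y)| \leq y/2$ to cancel the $1/\sqrt{n}$ factor, apply the appropriate refinement of the Weil bound (with $\tau(c') \leq \tau(c/N)$), sum the geometric series against $x = 2\pi/(D\sqrt{N})$, and in case $(3)$ split the $n$-sum according to $p\mid n$ or $p\nmid n$ so that the sharper bounds $\tau(c')\sqrt{c'}$ (resp.\ $\tau(c/p)\sqrt{p}\sqrt{c}$ combined with $\sum_k e^{-pkx} \leq 1/(px)$) each yield the contribution $D\sqrt{p}\tau(c/p)/\sqrt{c}$. This is exactly the paper's computation, so no gap to report.
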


\begin{proof}
For every $x \in \R$, we have $|J_1(x)| \leq x/2$. By triangular inequality, this gives
\[
|\Scal_A (c)|  \leq  \frac{2 \pi \sqrt{m}}{c} \sum_{n=1}^{+ \infty} |S(m,n;c)| e^{-nx} ,\qquad
|\Scal_B(d)|  \leq  \frac{2 \pi\sqrt{m}}{d \sqrt{N}} \sum_{n=1}^{+ \infty} |S(n,mN^{\phi(d)-1};d)| e^{-nx}.
\]
In cases $(1)$ and $(2)$, $m=1$ so $|S(m,n;c)| \leq 2 \tau(c/N) \sqrt{c}$ and $|S(n,mN^{\phi(d)-1} ; d)| \leq \tau(d) \sqrt{d}$ by Weil bounds, as $(d,N)=1$. But 
\[
\sum_{n=1}^{+ \infty} e^{-nx} = \frac{1}{e^x -1} \leq \frac{1}{x} =  \frac{D \sqrt{N}}{2 \pi}
\]
so we get the upper bounds for $\Scal_A$ and $\Scal_B$. In case $(3)$, $\Scal_B$ gives the upper bound by the same process as $(m,d)=1$. For $\Scal_A$, we use the third and fourth bounds of previous proposition and get
\begin{eqnarray*}
\Scal_A (c) & \leq & \frac{2 \pi \sqrt{p}}{c} \sum_{(n,p)=1} \tau \! \left(\frac{c}{p} \right) \sqrt{\frac{c}{p}} e^{-nx} + \frac{2 \pi \sqrt{p}}{c} \sum_{n=1}^{+ \infty} \tau \left(\frac{c}{p} \right) \! \sqrt{pc} \, e^{-pnx} \\
& \leq & \frac{2 \pi \tau(c/p)}{(e^x-1) \sqrt{c}} + \frac{ 2 \pi p \tau(c/p)}{(e^{px}-1) \sqrt{c}} \leq \frac{2 D \sqrt{p} \tau(c/p)}{\sqrt{c}}\\  
\end{eqnarray*}
with the same estimates.
\end{proof}
We will now obtain other bounds for $|\Scal_A|$ and $|\Scal_B|$ which will happen to be really sharper for ``small'' $c$ and $d$. They rely on  an Abel transform on the sums definining $\Scal_A$ and $\Scal_B$. For this, we use the following lemma which are analogues of Gauss sums and Polya-Vinogradov inequality for the twisting terms of \eqref{expressionAmchiN} and \eqref{expressionBmchiN}.

\begin{lem}
\label{BornessommesKloostermanDirichlet}
Let $c \neq D$ and $m$ be three fixed positive integers and $F$ the least common multiple of $c$ and $D$.
Let $\chi$ be a quadratic Dirichlet character of conductor $D$.
Then for every integer $\alpha$, 
\[
\left| \sum_{n=0}^{F-1} \chi(n) S(m,n;c) e^{2 i \pi n \alpha /F} \right| \leq c \sqrt{D}
\]
 and the sum is zero when $(\alpha,F/(c,D)) \neq 1$.
 With the same notations,  we have
 \[
 \sup_{K,K'  \in \N} \left| \sum_{n=K}^{K'} \chi(n) S(m,n;c) \right| \leq \frac{4 c\sqrt{D}}{\pi^2}( \log(Dc) + 1.5).
 \]
\end{lem}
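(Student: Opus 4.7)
The plan is to establish the two bounds in succession, both via finite Fourier analysis on $\Z/F\Z$, where $F=\mathrm{lcm}(c,D)$. Set $c_0=c/(c,D)$ and $D_0=D/(c,D)$, so that $F=c_0D=D_0 c$, $(c_0,D_0)=1$, and $F/(c,D)=c_0D_0$.

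For the first (complete) bound, I would open the Kloosterman sum and swap the order of summation:
\[
\sum_{n=0}^{F-1}\chi(n)S(m,n;c)e^{2i\pi n\alpha/F}=\sum_{v\in(\Z/c\Z)^{*}}e^{2i\pi mv/c}\sum_{n=0}^{F-1}\chi(n)e^{2i\pi n(\overline{v}D_0+\alpha)/F}.
\]
The inner sum is a Gauss-type sum for $\chi$ (of conductor $D$) extended periodically to $F$: splitting $n=r+kD$ with $0\le r<D$ and $0\le k<F/D=c_0$, the sum over $k$ forces $c_0\mid\overline{v}D_0+\alpha$, and then the sum over $r$ produces the classical Gauss sum $\tau(\chi)$. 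Concretely the inner sum equals $c_0\overline{\chi(\gamma)}\tau(\chi)$ for $\gamma=(\overline{v}D_0+\alpha)/c_0$ when $(\gamma,D)=1$, and vanishes otherwise. Since $(c_0,D_0)=1$, the divisibility condition reads $\overline{v}\equiv-\alpha D_0^{-1}\bmod c_0$, which has no solution in $(\Z/c\Z)^{*}$ as soon as $(\alpha,c_0)>1$; a parallel inspection of primes dividing $D_0$ shows the sum also vanishes when $(\alpha,D_0)>1$, establishing the claimed vanishing when $(\alpha,F/(c,D))\neq1$. For $(\alpha,c_0D_0)=1$, the number of admissible $\overline{v}$ is at most $\phi(c)/\phi(c_0)$, so the triangle inequality combined with $|\tau(\chi)|=\sqrt{D}$ and the elementary inequality $\phi(c)/c\le\phi(c_0)/c_0$ (valid because $c_0\mid c$) yields the desired bound $c\sqrt{D}$.

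For the incomplete sum, I would regard $f(n)=\chi(n)S(m,n;c)$ as an $F$-periodic function and apply finite Fourier inversion:
\[
\sum_{n=K}^{K'}f(n)=\sum_{\alpha\bmod F}\widehat{f}(\alpha)\sum_{n=K}^{K'}e^{2i\pi n\alpha/F},
\]
the Fourier coefficients being controlled by the first bound: $|F\widehat{f}(\alpha)|\le c\sqrt{D}$, with $\widehat{f}(\alpha)=0$ unless $(\alpha,c_0D_0)=1$. Combining this with the geometric sum bound $\bigl|\sum_{K\le n\le K'}e^{2i\pi n\alpha/F}\bigr|\le|\sin(\pi\alpha/F)|^{-1}$ for $\alpha\neq0$, and handling the $\alpha=0$ term by the first bound itself (after reducing to $K'-K+1\le F$ using periodicity), one obtains
\[
\Bigl|\sum_{n=K}^{K'}f(n)\Bigr|\le c\sqrt{D}+\frac{c\sqrt{D}}{F}\sum_{\alpha=1}^{F-1}\frac{1}{|\sin(\pi\alpha/F)|}.
\]
The constant $4/\pi^{2}$ together with the log term and additive $1.5$ then come from the sharp Polya-Vinogradov estimate of the remaining $1/\sin$ sum (an $L^{1}$-norm of a Dirichlet kernel), combined with the support restriction of $\widehat{f}$ to residues coprime to $c_0D_0$, and the inequality $F\le Dc$.

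The main obstacle is securing the precise constant $4/\pi^{2}$ in the Polya-Vinogradov step: the naive bound $\sin(\pi x)\ge2x$ on $[0,1/2]$ gives only coefficient $1$, and one needs the sharper asymptotic $\sum_{1\le\alpha\le F/2}1/\sin(\pi\alpha/F)\sim(2/\pi)F\log F$, together with a further gain of $2/\pi$ obtained by exploiting the vanishing of $\widehat{f}$ off the units modulo $c_0D_0$. The first bound, by contrast, is routine manipulation of Gauss sums once the indexing through $c_0$ and $D_0$ is installed, the only subtle point being the prime-by-prime verification of the vanishing condition.
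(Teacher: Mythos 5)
Your treatment of the complete-sum bound $|\sum_{n=0}^{F-1}\chi(n)S(m,n;c)e^{2i\pi n\alpha/F}|\le c\sqrt{D}$ is in substance the same as the paper's: open the Kloosterman sum, split $n$ into its residues modulo $D$ and multiples of $D$ up to $F$, observe that the $c_0$-periodic part vanishes unless $\overline v\equiv -\alpha D_0^{-1}\bmod c_0$, extract the Gauss sum $\tau(\chi)$, and note that a prime of $D_0$ dividing $\alpha$ kills $\chi$ of the relevant argument. The counting of admissible $v$ and the inequality $\phi(c)/\phi(c_0)\le c/c_0$ are harmless reformulations of the paper's ``$|I_\alpha|\le(c,D)$''; this part is sound.

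The incomplete-sum estimate, however, has a genuine gap at exactly the place where the constant $4/\pi^2$ must be produced. After Fourier inversion you bound the Dirichlet-kernel factor by
\[
\Bigl|\sum_{n=K}^{K'}e^{-2i\pi\alpha n/F}\Bigr|=\frac{|\sin(\pi\alpha(K'-K+1)/F)|}{|\sin(\pi\alpha/F)|}\le\frac{1}{|\sin(\pi\alpha/F)|},
\]
throwing away the sine in the numerator, and then hope to recover the lost factor $2/\pi$ (i.e.\ to pass from the coefficient $2/\pi$, given by $\sum_{\alpha=1}^{F-1}1/\sin(\pi\alpha/F)\sim(2/\pi)F\log F$, down to $4/\pi^2$) from the vanishing of $\widehat{f}$ off the units modulo $c_0D_0$. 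That support restriction cannot do this. The dominant contributions to $\sum_\alpha 1/\sin(\pi\alpha/F)$ come from the smallest residues $\alpha=1,2,\dots$ and their mirror images $F-1,F-2,\dots$, and $\alpha=1$ is always coprime to $c_0D_0$; moreover if $c_0D_0$ is a single large prime (e.g.\ $c=1$, $D$ prime, so $F=D$) the coprimality condition removes \emph{no} $\alpha$ in $\{1,\dots,F-1\}$ at all. There is therefore no uniform gain of $2/\pi$ from the support of $\widehat f$, and your bound lands at the weaker constant $2/\pi$, not the claimed $4/\pi^2$. The paper obtains $4/\pi^2$ by \emph{retaining} the numerator $|\sin(\pi\gamma(K'-K+1)/F)|$, setting $S_{K,F}=\sum_{\gamma=1}^{F-1}|\sin(\pi\gamma K/F)|/\sin(\pi\gamma/F)$ and expanding $|\sin\theta|=\frac2\pi-\frac4\pi\sum_{m\ge1}\frac{\cos 2m\theta}{4m^2-1}$ so that the mean value $2/\pi$ of $|\sin|$ appears honestly; the remaining cosine sums are then controlled by the lemmas of Pomerance. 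That step is indispensable, and the coprimality of $\alpha$ plays no role in the paper's proof of the incomplete-sum estimate beyond killing the $\gamma=0$ term.

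A secondary inaccuracy: your display inserts a stray ``$+\,c\sqrt D$'' ostensibly from the $\alpha=0$ term, but as you yourself observe $\widehat f(0)=0$ because $(0,c_0D_0)=c_0D_0\neq 1$; that term contributes nothing and the paper discards it exactly. This is cosmetic, whereas the loss of the numerator sine is structural.
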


\begin{rem}
 For $c=1$ (no Kloosterman sum), this is one of the versions of Polya-Vinogradov inequality for Dirichlet characters (weaker than \cite{Pomerance11}). For
$D=1$ (no Dirichlet character), this is an analogous inequality for Kloosterman sums that we could not find in the literature but should exist in sharper versions.
\end{rem}

\begin{proof}
We define $c'=c/(c,D)$ and $D'=D/(c,D)$. By definition of Kloosterman sums,
\begin{eqnarray}
\label{ouvertureKloosterman}
\sum_{n=0}^{F-1} \chi(n) S(m,n;c) e^{2 i \pi n \alpha /F} & = & \sum_{v \in (\Z/c\Z)^*} e^{2 i \pi m \overline{v}/c} \sum_{n=0}^{F-1} \chi(n) e^{2 i \pi n ( v/c + \alpha/F)} \\
& =& \sum_{v \in (\Z/c\Z)^*}  e^{2 i \pi m \overline{v}/c} \left( \sum_{n'=0}^{D-1} \chi(n') \theta^{n'} \right) \left( \sum_{\ell=0}^{c'-1} \theta^{\ell D} \right)
\end{eqnarray}
with $\theta = \exp ( 2 i \pi (v/c+ \alpha/F))$, because $\chi$ only depends on $n \! \mod D$. As $\theta^D$ is a $c'$-th root of unity, the  right term of this equality is nonzero if and only if $\theta ^D=1$, that is if and only if
\[
F | (D'v + \alpha)D \Llra c' | D'v + \alpha \Llra v = -(D')^{-1} \alpha \! \mod c'.
\]
Let $I_\alpha$ be the set of the invertible $v \in (\Z/c \Z)^*$ congruent to $(D')^{-1} \alpha \! \mod c'$. 
Notice first that if $(\alpha,c') \neq 1$, $I_\alpha = \emptyset$, hence the whole sum is zero.
If $(\alpha,c')=1$, $I_\alpha \neq \emptyset$ and we have
\[
\sum_{n=0}^{F-1} \chi(n) S(m,kn;c) e^{2 i \pi n \alpha /F} = c' \sum_{v \in I_\alpha} e^{2 i \pi m \overline{v}/c} \sum_{n'=0}^{D-1} \chi(n') \theta^{n'}.
\]
The inner sum is a Gauss sum on $\chi$ as $\theta$ is a $D$-th root of unit. More precisely, if we define $G(\chi) = \sum_{n=0}^{D-1} \chi(n) e^{2 i n \pi/D}$, by the usual properties of Gauss sums, $|G(\chi)|= \sqrt{D}$ and 
\[
\sum_{n=0}^{F-1} \chi(n) S(m,n;c) e^{2 i \pi n \alpha /F} = c' \sum_{v \in I_\alpha} e^{2 i \pi m \overline{v}/c} \chi \left(\frac{D'v + \alpha}{c'} \right) G(\chi).
\]

Now, if $(D', \alpha) \neq 1$, $\chi((D'v + \alpha)/c')= 0$ for all $v \in I_\alpha$ so the sum is zero. In the general case, the cardinality of $I_\alpha$ is at most $(c,D)$, so 
\[ 
\left|
\sum_{n=0}^{F-1} \chi(n) S(m,n;c) e^{2 i \pi n \alpha /F}\right| \leq c' (c,D) \sqrt{D} = c \sqrt{D} .
\]
We now prove the second inequality, using Polya-Vinogradov approach. Take $K,K'$ any integers. By discrete Fourier transform, as $\chi(n) S(m,n;c)$ is $F$-periodic in $n$, we have
\[
 \sum_{n=K}^{K'} \chi(n) S (m, n,c)  = \frac{1}{F}  \cdot \sum_{\gamma=0} ^{F-1} \left[  \sum_{\beta=0}^{F-1} \chi(\beta) S(m,\beta;c) e^{2 i \pi  \gamma \beta /F}  \sum_{n=K}^{K'}  e^{-2 i \pi \gamma n /F}  \right].
 \]
 The interest of the previous step of the lemma is now clear : we have a geometric sum on $n$ that is easy to bound on the right. Notice that the sum on $\beta$ at $\gamma = 0$ is zero from the previous results because $(0,c'D') \neq 1$. We obtain
\[
 \left| \sum_{n=K}^{K'} \chi(n) S (m, n,c) \right|   \leq  \frac{1}{F} \cdot \sum_{\gamma=1}^{F-1} c \sqrt{D} \left| \frac{1 - e^{- 2 i \pi \gamma (K'-K+1)/F}}{1-e^{-2 i \pi \gamma/F}} \right| 
  \leq  \frac{c \sqrt{D}}{F} \cdot \sum_{\gamma=1}^{F-1} \frac{|\sin ( \pi \gamma(K'-K+1) /F)|}{\sin ( \pi \gamma /F)}.
 \]
Hence, it only remains to prove that for every integers $F\geq 1$ and $K$,
\begin{equation}
 \label{majosommetrigosin}
S_{K,F} := \sum_{\gamma=1}^{F-1} \frac{|\sin ( \pi \gamma K /F)|}{\sin ( \pi \gamma /F)} \leq \frac{4F}{\pi^2} \cdot \left( \log(F) + 1.5 \right)
\end{equation}
From Lemma 2 and proof of Lemma 3 of \cite{Pomerance11}, we know that
for every $n \in \N$ and every $x \in \R$,
\begin{equation}
\label{lemmecosPomerance}
\sum_{j=1}^n \frac{ \cos(jx)}{j} > - \log(2) - \frac{2}{n},
\end{equation}
\begin{equation}
\label{lemmeinversesinPomerance}
A_{K,F} := \sum_{\gamma=1}^{F-1} \frac{1}{ \sin ( \pi \gamma/F)} \leq \frac{ 2 F}{\pi} \left( \log(F) + 0.13 \right).
\end{equation}
Using the Fourier series expansion of $|\sin \theta|$, we have
\begin{eqnarray}
S_{K,F}& = & \frac{2}{\pi} \sum_{\gamma=1}^{F-1} \frac{1}{ \sin ( \pi \gamma/F)} - \frac{4}{\pi} \sum_{m=1}^{+ \infty} \frac{1}{ 4 m^2 -1} \left( \sum_{\gamma=1}^{F-1} \frac{\cos ( 2 \pi m K \gamma /F)}{\sin ( \pi \gamma/F)} \right) \nonumber \\
\label{SKFAKFBKF}
S_{K,F} & = &  \frac{2}{\pi} A_{K,F} - \frac{4}{\pi} \sum_{m=1}^{+ \infty} \frac{B_{m,K,F}}{4 m^2 -1}.
\end{eqnarray}
The bound for $A_{K,F}$ is given by $\eqref{lemmeinversesinPomerance}$, to there is only $B_{m,K,F}$ left to study. 
Suppose $F$ is odd. For every $x \in [0,\pi/2]$, $\sin(x) = x - \varepsilon_x x^3/6 \geq 0$ with $\varepsilon_x \in [0,1]$, so we have
\[
\left| \frac{1}{\sin(x)} - \frac{1}{x} \right| \leq \frac{x}{6 - x^2}.
\]
From this, we get
\begin{eqnarray*}
B_{m,K,F} & = & 2 \sum_{\gamma=1}^{(F-1)/2} \frac{\cos ( 2 \pi m K \gamma /F)}{\sin ( \pi \gamma/F)} \\
& \geq & 2 \sum_{\gamma=1}^{(F-1)/2} \frac{\cos ( 2 \pi m K \gamma /F)}{\pi \gamma/F} - 2 \sum_{\gamma=1}^{(F-1)/2} \frac{ \pi \gamma /F}{6 - (\pi \gamma/F)^2} \\
& \geq &  -  \frac{2F}{\pi} (\log(2) + 4/(F-1)) - \frac{2F}{\pi} \int_0^{\pi/2} \frac{u du}{6 - u^2} \\
& \geq &  -  \frac{2F}{\pi} (0.96 + 4/(F-1)).
\end{eqnarray*}
Here, we use $\eqref{lemmecosPomerance}$ for the first sum and a careful sum-integral comparison for the second.
Finally, as $\sum_{m=1}^{+ \infty} 1/(4 m^2 -1) = 1/2$, we obtain from $\eqref{lemmeinversesinPomerance}$ and $\eqref{SKFAKFBKF}$ the inequality
\[
S_{K,F}  \leq  \frac{4F}{\pi^2} ( \log(F) + 1.09 + 4/(F-1)),
\]
which gives the result when $F \geq 11$, and we complete the proof by computation for $F \leq 10$. The proof works the same way for $F$ even, except we have to take aside the term $\gamma=F/2$.
\end{proof}

These bounds on partial sums will give us new bounds on $|\Scal_A|$ and $|\Scal_B|$ that we write in the next proposition.
\begin{prop}
For every integers $N|c$ and $(d,N)=1$ different from $D$, we have
 \[
 |\Scal_A(c)| \leq 6 \sqrt{Dm} (\log(Dc) + 1.5)  \qquad |\Scal_B(d)| \leq \frac{6\sqrt{Dm} (\log(Dd) + 1.5)}{\sqrt{N}}.
 \]
\end{prop}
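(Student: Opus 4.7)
The plan is to apply Abel (partial) summation to each of $\Scal_A(c)$ and $\Scal_B(d)$, exploiting the new partial-sum bound established in Lemma \ref{BornessommesKloostermanDirichlet}, and then bounding a total variation via Bessel function estimates.

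First I would write $\Scal_A(c) = \sum_{n\ge 1} \chi(n) S(m,n;c)\cdot \phi_c(n)$ where $\phi_c(n) := J_1(u\sqrt{n})/\sqrt{n}\cdot e^{-nx}$ with $u=4\pi\sqrt{m}/c$. Setting $T_N = \sum_{n\le N}\chi(n)S(m,n;c)$, the second inequality of Lemma \ref{BornessommesKloostermanDirichlet} (valid as $c\ne D$) gives the uniform bound $|T_N|\le M_c := \frac{4c\sqrt{D}}{\pi^2}(\log(Dc)+1.5)$. Because $\phi_c(n)\to 0$, Abel summation (with no boundary term) yields
\[
\Scal_A(c) = \sum_{n\ge 1} T_n\bigl(\phi_c(n)-\phi_c(n+1)\bigr),\qquad |\Scal_A(c)| \le M_c \cdot V(\phi_c),
\]
where $V(\phi_c) := \sum_{n\ge 1}|\phi_c(n)-\phi_c(n+1)|$ is the discrete total variation. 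Everything now reduces to showing $V(\phi_c) \le \frac{3\pi^2\sqrt{m}}{2c}$, which after substituting $u = 4\pi\sqrt{m}/c$ is the statement $V(\phi_c) \le \frac{3\pi}{8}u$.

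Second, to estimate $V(\phi_c)$ I would exploit the factorization $\phi_c(n) = u\,H(u\sqrt{n})\,e^{-nx}$ where $H(y)=J_1(y)/y$, together with the Bessel identity $H'(y)=-J_2(y)/y$. Since $J_2\ge 0$ on $[0,j_{2,1}]$, the function $H$ (and hence $\phi_c$) is monotonically decreasing on the index range $\{n:u\sqrt{n}\le j_{2,1}\}$; on this range the contribution telescopes to at most $\phi_c(1) \le u/2$. On the complementary oscillatory range $\{n:u\sqrt{n}>j_{2,1}\}$, I would apply the general product bound $V(fg)\le\sup|f|V(g)+\sup|g|V(f)$ with $f=e^{-nx}$ and $g(n)=u H(u\sqrt{n})$: since $|H(y)|\le |H(j_{2,1})|$ on that range and the discrete variation of the samples $\{H(u\sqrt{n})\}$ is bounded above by $V(H|_{[j_{2,1},\infty)})$, which is a finite absolute constant (by the asymptotic $|J_1(y)|=O(y^{-1/2})$ the tail sum $\sum_k|H(j_{2,k+1})-H(j_{2,k})|$ converges like $\sum k^{-3/2}$), one obtains $V(\phi_c)\le C_0 u$ for an explicit absolute constant $C_0$. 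Numerically $V(H|_{[j_{2,1},\infty)})$ is small enough that $C_0 \le 3\pi/8$, which closes the estimate for $\Scal_A(c)$. The argument for $\Scal_B(d)$ is formally identical: only $u$ has to be replaced by $v := 4\pi\sqrt{m}/(d\sqrt{N})$, and the Kloosterman partial-sum bound becomes $M_d = \frac{4d\sqrt{D}}{\pi^2}(\log(Dd)+1.5)$; the extra factor $1/\sqrt{N}$ in the final inequality comes directly from the identity $v = u/\sqrt{N}$.

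The main obstacle is the quantitative tuning of the constant $6$: the qualitative bound of the form $V(\phi_c) = O(u)$ is immediate from the above structural analysis, but pushing the absolute constant down to $6$ requires a careful numerical estimate of $V(H|_{[j_{2,1},\infty)}) = \sum_{k\ge 1}|H(j_{2,k+1})-H(j_{2,k})|$. I would compute the first few terms of this alternating-sign series explicitly at the zeros $j_{2,k}$ of $J_2$, then control the tail by means of the asymptotic expansion $J_1(y)\sim\sqrt{2/(\pi y)}\cos(y-3\pi/4)$, which reduces the tail to a comparison with $\sum_{k\ge K}k^{-3/2}$. A subsidiary nuisance is checking that the step sizes $u\sqrt{n+1}-u\sqrt{n}$ do not cause oversampling issues in the discrete variation (they do not, because $\sum_n|H(u\sqrt{n})-H(u\sqrt{n+1})| \le V(H|_{[u,\infty)})$ by the triangle inequality applied piecewise on monotone intervals of $H$).
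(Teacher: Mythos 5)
Your setup is precisely the paper's: Abel summation against the partial sums $T_n$, the uniform bound $|T_n|\le M_c=\frac{4c\sqrt D}{\pi^2}(\log(Dc)+1.5)$ from Lemma \ref{BornessommesKloostermanDirichlet}, and the reduction to the total-variation estimate $V(\phi_c)\le\frac{3\pi}{8}u$ (the arithmetic $M_c\cdot\frac{3\pi^2\sqrt m}{2c}=6\sqrt{Dm}(\log(Dc)+1.5)$ is correct, as is the scaling $v=u/\sqrt N$ for $\Scal_B$). The divergence, and the gap, is in how you control $V(\phi_c)$.

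You split at $j_{2,1}$ and assert that on $\{n:u\sqrt n\le j_{2,1}\}$ the function $\phi_c(n)=u\,H(u\sqrt n)\,e^{-nx}$ is monotone decreasing ``since $H$ is,'' so the variation there telescopes to at most $\phi_c(1)$. This does not follow: $H(y)=J_1(y)/y$ changes sign at $j_{1,1}\approx 3.83<j_{2,1}\approx 5.14$, and a product of two decreasing functions is decreasing only while both are nonnegative. On $[j_{1,1},j_{2,1}]$ one has $H<0$ with $|H|$ increasing while $e^{-nx}$ decreases, so $\phi_c$ is not monotone on your ``monotone range,'' and even if it were the telescoped variation would be $|\phi_c(1)-\phi_c(n_0)|$, not $\phi_c(1)$ (the endpoint value $\phi_c(n_0)$ can be negative). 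Splitting at $j_{1,1}$ instead would repair the monotonicity, but then the oscillatory-range constant must be re-estimated and you would still owe the numerical verification of $C_0\le 3\pi/8$, which you explicitly leave as ``the main obstacle.''

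The paper avoids the case split entirely with a single product-variation inequality that you did not find. With $\psi(y)=H(u(y))$ (which tends to $0$ at infinity) and $E(y)=e^{-yx}$ nonnegative, decreasing, $E(0)=1$, writing $\psi(y)=-\int_y^\infty\psi'(t)\,dt$, differentiating $\psi E$, taking absolute values and applying Fubini gives
\[
\operatorname{TV}(\psi E)\;\le\;E(0)\operatorname{TV}(\psi)\;=\;\operatorname{TV}(H)\;=\;\int_0^\infty\Bigl|\frac{J_2(t)}{t}\Bigr|\,dt\;\le\;1.1,
\]
hence $V(\phi_c)\le 1.1\,u$ and $\frac{16}{\pi}\cdot 1.1<6$ yields the constant at once, with no decomposition and no tail sum over Bessel zeros. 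I recommend replacing your two-range argument with this inequality.
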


\begin{proof}
For every integer $n \geq 0$, define
\[
A_n  :=  \sum_{k=1}^n \chi(k) S(m,k;c) , \qquad 
B_n := \sum_{k=1}^n \chi(k) S(k,mN^{\phi(d)-1} ;d).
\]
so that
\[
\Scal_A(c) = \sum_{n=1}^{+ \infty} (A_n-A_{n-1}) \frac{ 4 \pi \sqrt{m} f_A(n)}{c}, \qquad \Scal_B(d) = \sum_{n=1}^{+ \infty} (B_n-B_{n-1}) \frac{ 4 \pi \sqrt{m} f_B(n)}{d \sqrt{N}}
\]
with
\[
f_A(y) = \frac{c J_1 \left( \frac{4 \pi \sqrt{my}}{c} \right)}{4 \pi \sqrt{my}} e^{-yx}, \qquad f_B(y) = \frac{d \sqrt{N} J_1 \left( \frac{4 \pi \sqrt{my}}{d \sqrt{N}} \right)}{4 \pi \sqrt{my}} e^{-yx}.
\]
Therefore, with the Abel transform, we have
\[
|\Scal_A(c)|  \leq  \frac{4 \pi \sqrt{m}}{c} \cdot \sum_{n=1}^{+ \infty} |A_n| \cdot |f_A(n) - f_A(n+1)|  \qquad |\Scal_B(d)|  \leq  \frac{4 \pi \sqrt{m}}{d \sqrt{N}} \cdot \sum_{n=1}^{+ \infty} |B_n| \cdot |f_B(n) - f_B(n+1)| \\
\]
\[
|\Scal_A(c)|  \leq \frac{16\sqrt{Dm}}{\pi} \operatorname{Totvar} (f_A) (\log(Dc) + 1.5)  \qquad |\Scal_B(d)|  \leq \frac{16 \sqrt{Dm}}{\pi \sqrt{N}}  \operatorname{Totvar} (f_B) (\log(Dd) + 1.5)
\]
from Proposition \ref{BornessommesKloostermanDirichlet}, with $\operatorname{Totvar} (f_A)$ and $\operatorname{Totvar} (f_B)$ the total variations of $f_A$ and $f_B$ on $[0,+ \infty[$. It is clear from their expression that these two total variations are both bounded by the total variation of $J_1(x)/x$ on $[0,+ \infty[$, which is equal to
\[
\int_{0}^{+ \infty} \left| \frac{J_2(x)}{x} \right| dx \leq 1.1
\]
Rounding up $16/\pi \cdot 1.1$ to 6, we obtain the desired bounds.
\end{proof}

For the next estimates, we recall some standard bounds in the following lemma.
\begin{lem}
 For every integer $\lambda \geq 1$, we have
\[
 \sum_{n=1}^{\lambda} \frac{1}{n} \leq \log ( \lambda)+1, \qquad \sum_{n=1}^{\lambda} \frac{\log(n)}{n} \leq \frac{\log(\lambda)^2}{2}, \qquad \sum_{n \geq \lambda} \frac{\tau(n)}{n^{3/2}} \leq \frac{2 \log(\lambda) + 7}{\sqrt{\lambda}}.
\]
with $\tau(n)$ the number of positive divisors of $n$.
\end{lem}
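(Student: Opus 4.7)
The three inequalities are of a fairly classical nature, and the main tools are integral comparison and Abel summation together with the elementary bound $D(x):=\sum_{n\le x}\tau(n)\le x(\log x+1)$ (which itself follows from the first inequality via $D(x)=\sum_{d\le x}\lfloor x/d\rfloor\le x\sum_{d\le x}1/d$).

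For the first inequality, since $t\mapsto 1/t$ is decreasing on $(0,+\infty)$, each term $1/n$ with $n\ge 2$ is bounded by $\int_{n-1}^{n} dt/t$, so summation yields
\[
\sum_{n=1}^{\lambda}\frac{1}{n}\le 1+\int_{1}^{\lambda}\frac{dt}{t}=\log(\lambda)+1.
\]
For the second inequality, the function $x\mapsto\log(x)/x$ is increasing on $(0,e)$ and decreasing on $(e,+\infty)$, and the term $n=1$ contributes zero. I would isolate the small indices $n\in\{2,3\}$ and use the decreasing behaviour on $[3,+\infty)$ to compare $\log(n)/n$ with $\int_{n-1}^{n}\log(t)/t\,dt$ for $n\ge 4$. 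Summing and using $\int_{3}^{\lambda}\log(t)/t\,dt=((\log\lambda)^{2}-(\log 3)^{2})/2$, the constant $(\log 3)^{2}/2$ absorbs the contributions from $n=2,3$ after a small case-check for tiny values of $\lambda$, giving the claimed inequality.

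The third inequality is the substantive one, and I would prove it by the hyperbola method. Writing $\tau(n)=\sum_{d\mid n}1$ and setting $n=dm$, one has
\[
\sum_{n\ge\lambda}\frac{\tau(n)}{n^{3/2}}=\sum_{dm\ge\lambda}\frac{1}{(dm)^{3/2}}=2\sum_{\substack{d<m\\ dm\ge\lambda}}\frac{1}{(dm)^{3/2}}+\sum_{d^{2}\ge\lambda}\frac{1}{d^{3}}.
\]
The diagonal term is $O(1/\lambda)$. For the off-diagonal part I split at $d=\sqrt{\lambda}$. In the range $d\le\sqrt{\lambda}$, the inner sum is $\sum_{m\ge\lceil\lambda/d\rceil}m^{-3/2}$, bounded via $\int_{M-1}^{\infty}t^{-3/2}dt$ by essentially $2\sqrt{d/\lambda}$; summing and applying the first inequality of the lemma gives a contribution of the shape $(2\log\lambda+O(1))/\sqrt{\lambda}$. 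In the range $d>\sqrt{\lambda}$, the condition $dm\ge\lambda$ holds for every $m>d$, so the inner sum is at most $2/\sqrt{d}$, and $\sum_{d>\sqrt{\lambda}}d^{-2}$ contributes a bounded $O(1/\sqrt{\lambda})$. Collecting all these estimates yields the stated bound $(2\log\lambda+7)/\sqrt{\lambda}$.

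The main obstacle is precisely the tracking of numerical constants in this last step: a naive Abel summation based directly on $D(x)\le x(\log x+1)$ would plug into $(3/2)\int_{\lambda}^{\infty}D(t)t^{-5/2}dt$ and produce the unwanted coefficient $3$ in front of $\log\lambda$ rather than $2$. The hyperbola split above, combined with the sharper bound $\sum_{m\ge M}m^{-3/2}\le 2/\sqrt{M-1}$ in the regime where $\lambda/d$ is large, is what cuts the leading coefficient down to $2$. The additive constant $7$ is then obtained by carefully summing up the boundary contributions (the diagonal, the $d>\sqrt{\lambda}$ tail, and the $+1$ in the first inequality applied to $\sum_{d\le\sqrt{\lambda}}1/d$).
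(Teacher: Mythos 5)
Your treatment of the first inequality is fine and matches the paper's terse ``sum--integral comparison.'' For the third inequality your route is genuinely different from the paper's: you apply the hyperbola method, opening $\tau(n)=\sum_{d\mid n}1$ and splitting the off-diagonal sum at $d=\sqrt{\lambda}$, whereas the paper splits the double sum $\sum_{k\ell\geq\lambda}$ at $k\approx\lambda/2$ and $k\approx\lambda$ (so that $\lceil\lambda/k\rceil$ becomes $2$ and then $1$). Both decompositions produce the leading coefficient $2$ in front of $\log\lambda$. What the hyperbola split buys you is a more familiar and symmetric structure (the diagonal and the two off-diagonal ranges); what the paper's split buys is that in the middle and tail ranges the inner sum over $\ell$ is literally constant ($\zeta(3/2)-1$ or $\zeta(3/2)$), which makes the bookkeeping of the additive constant more mechanical.

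There are, however, two concrete problems. First, the second displayed inequality is in fact \emph{false} for small $\lambda$ (and your proposed proof would fail at precisely the ``small case-check'' step you defer). For $\lambda=2$ one has $\sum_{n=1}^{2}\log(n)/n = (\log 2)/2 \approx 0.347$ while $(\log 2)^{2}/2 \approx 0.240$, and the failure persists up to roughly $\lambda\approx 20$; your proposed absorption $(\log 3)^2/2 \approx 0.603$ is also strictly smaller than $(\log 2)/2 + (\log 3)/3 \approx 0.713$, so the bookkeeping you outline cannot close. This is really a defect of the lemma as stated in the paper (it is used there only with $\lambda$ of size at least $D/e^{7/2}$, where it is harmless asymptotically), but a proof attempt must notice it rather than wave at a case-check.

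Second, for the third inequality, the constant $7$ is genuinely tight (the true asymptotic is $(2\log\lambda + 4 + 4\gamma)/\sqrt{\lambda}\approx(2\log\lambda+6.31)/\sqrt{\lambda}$, and at $\lambda=1$ the sum equals $\zeta(3/2)^{2}\approx 6.82$), and your sketch, if carried out with the estimates you indicate, overshoots it. Bounding $\sum_{m\geq M}m^{-3/2}$ by $2/\sqrt{M-1}$ with $M=\lceil\lambda/d\rceil$ does not give $\leq 2\sqrt{d/\lambda}$ (since $M-1<\lambda/d$), so the factor in front of $\log\lambda$ becomes $2\sqrt{2}$ rather than $2$ unless you switch to a bound of the form $\sum_{m\geq M}m^{-3/2}\leq M^{-3/2}+2/\sqrt{M}$. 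Even after that fix, the range $d\leq\sqrt{\lambda}$ contributes roughly $(2\log\lambda+4)/\sqrt{\lambda}$ via $\sum_{d\leq\sqrt{\lambda}}1/d\leq\tfrac12\log\lambda+1$, the range $d>\sqrt{\lambda}$ (merged with the diagonal, these two together equal $\bigl(\sum_{d>\sqrt{\lambda}}d^{-3/2}\bigr)^{2}$) contributes roughly another $4/\sqrt{\lambda}$, giving a total near $(2\log\lambda+8)/\sqrt{\lambda}$. So ``collecting the estimates'' does not immediately yield $7$: you need either the sharper $\sum_{d\le N}1/d\le\log N+\gamma+O(1/N)$ in place of the first inequality, or a finer treatment of the $d>\sqrt{\lambda}$ tail, plus a hand-check of small $\lambda$. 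That is the same caveat the paper itself acknowledges, but your sketch understates how much room there is to lose.
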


\begin{proof}
 The first two bounds are given by sum-integral comparison. For the third one, we have
\begin{eqnarray*}
 \sum_{n \geq \lambda} \frac{\tau(n)}{n^{3/2}} & = & \sum_{k,\ell=1}^{+ \infty} \frac{\mathbf{1}_{k \ell \geq \lambda}}{(k\ell)^{3/2}} \\
 & = & \sum_{k=1}^{\lceil \lambda/2 \rceil -1} \sum_{\ell= \lceil \lambda/k \rceil}^{+ \infty} \frac{1} { (k \ell)^{3/2}} + \sum_{k= \lceil \lambda/2 \rceil}^{\lambda-1} \sum_{\ell=2}^{+ \infty}  \frac{1} { (k \ell)^{3/2}} + \sum_{k=\lambda+1}^{+ \infty}  \sum_{\ell=1}^{+ \infty} \frac{1}{(m \ell)^{3/2}}
\end{eqnarray*}
and from this cutting-up, we get the wanted bound after a careful computation (leaving small $\lambda$ aside, but we can check the bound for them afterwards).
\end{proof}

\begin{rem}
The bound given here is the sharpest with integral coefficients : for $\lambda=1$, the sum is about $6.8$, which is an explanation for the fact that we need to be very careful throughout the computation to obtain the coefficient 7 above.
\end{rem}

We can finally make the estimates for $A(m,\chi,N)$ and $B(m,\chi,N)$.
\begin{prop}
\label{bornesamchiNabel}
With the same notations as before, we have
\begin{eqnarray*}
|A(m,\chi,N)| & \leq & \min \left(\frac{14D}{N},\frac{\sqrt{Dm}}{N} \left(9 \log^2(D) +6 \log(D) \log(N) \right) \right) \\
|B(m,\chi,N)| & \leq & \min \left(7 D \sqrt{m},\frac{\sqrt{Dm}}{\sqrt{N}}\left(9 \log^2(D) + 12 \log(D) \log(N) + 6 \log^2 (N) \right) + \frac{\tau(D) \sqrt{m}}{\sqrt{D}} \right) 
\end{eqnarray*}
\end{prop}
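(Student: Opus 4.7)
The strategy is to exploit the dichotomy between the two bounds on $\Scal_A(c)$ and $\Scal_B(d)$ proved just above: the Weil-induced bound is sharper for large $c$ or $d$, while the Abel-transform bound is sharper for small $c$ or $d$. Accordingly, the two terms appearing in each $\min$ of the statement will come from two different regimes of application. The coarse terms ($14D/N$ and $7D\sqrt{m}$) require only the Weil-induced bound, while the refined terms require a dyadic-style split, summing the two types of bounds on complementary ranges.

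For the coarse bound on $A$, I would simply write $c = Nk$, which collapses $A(m,\chi,N)$ to $\tfrac{2D}{N}\sum_{k\geq 1}\tau(k)/k^{3/2}$; the third inequality of the elementary divisor-sum lemma at $\lambda = 1$ gives $\sum_{k\geq 1}\tau(k)/k^{3/2} \leq 7$, producing $14D/N$. The analogous summation for $B$, now over $d$ coprime to $N$, yields $7D\sqrt{m}$ with no further work.

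For the refined bound I would split each series at a cutoff $\lambda \asymp D/m$: apply the Abel-transform bound on the head (indices $k \leq \lambda$, respectively $d \leq \lambda$) and the Weil-induced bound on the tail. The head contributes, after applying the first two inequalities of the elementary lemma on $\log$-sums, a term of order $\tfrac{\sqrt{Dm}}{N}\bigl(\log(DN)\log\lambda + \tfrac{1}{2}\log^2\lambda\bigr)$ times explicit constants; the tail contributes $\tfrac{2D(2\log\lambda + 7)}{N\sqrt\lambda}$. The choice $\lambda = \lceil D/m\rceil$ converts the $D/\sqrt\lambda$ of the tail into $\sqrt{Dm}$, so that both halves share the same overall scale $\sqrt{Dm}/N$; collecting terms, the quadratic-in-$\log$ contributions organise into exactly $9\log^2 D + 6\log D \log N$, with all additive and $\log D$-linear leftovers to be absorbed in these two leading pieces.

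For the bound on $B$ the argument is identical with one caveat: the Abel-transform bound was only proved for $d \neq D$, while the sum over $d$ coprime to $N$ can indeed contain $d = D$ since $(D,N) = 1$. I would isolate that single term, estimate it by the Weil-induced bound to extract the residual $\tau(D)\sqrt{m}/\sqrt{D}$ that appears in the statement, and then run the split-summation argument on the remaining indices. No such isolation is necessary for $A$, because $N \mid c$ together with $(N,D)=1$ and $N > 1$ automatically excludes $c = D$ from the sum. The one genuine difficulty will be explicit bookkeeping: keeping track of the $+1.5$ in the Abel-transform bound, the $+1$ and $+7$ in the elementary sum lemmas, and the $1/(e^x-1) \leq 1/x$ step used earlier, so that all lower-order contributions can be absorbed into the nominal leading constants $9$ and $6$ without wasting anything. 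This should be routine in the regime where $\log^2 D$ dominates $\log D$, which is exactly the range ultimately forced by the hypothesis $p > 50 D^{1/4} \log(D)$.
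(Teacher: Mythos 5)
Your strategic outline matches the paper's: split each series at a parameter $\lambda$, apply the Abel-transform bound to the head and the Weil-induced bound to the tail, handle $d=D$ separately with the Weil bound to extract the $\tau(D)\sqrt m/\sqrt D$ term, and invoke the divisor-sum and $\log$-sum lemmas to evaluate the pieces. The derivations of the coarse bounds $14D/N$ and $7D\sqrt m$ are also correct, as is the observation that $c=D$ is automatically excluded from the $A$-sum when $N>1$ and $(N,D)=1$. However, the cutoff $\lambda=\lceil D/m\rceil$ does not yield the stated constants, and for $B$ it is off by a factor of $N$.

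Take $m=1$ (cases $(1)$ and $(2)$), so your $\lambda$ equals $D$. With $a=\log D$, $b=\log N$, the head contributes $\tfrac{6\sqrt{Dm}}{N}\left((a+b+1.5)(1+a)+\tfrac{a^2}{2}\right)$ and the tail $\tfrac{\sqrt{Dm}}{N}(4a+14)$, which together give $\tfrac{\sqrt{Dm}}{N}\left(9a^2+6ab+19a+6b+23\right)$. The residual $19a+6b+23$ is strictly positive and cannot be absorbed into $9a^2+6ab$: the proposition is an exact inequality, and the hypothesis $p>50D^{1/4}\log D$ does not force $D$ to be large (e.g.\ $D=3$, $p=73$ is admissible). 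The paper instead takes $\lambda=D/e^{7/2}$; the shift turns $2\log\lambda+7$ into $2a$ and $\log\lambda+1$ into $a-2.5$, and after collecting terms the residual becomes $-4a-15b+14.25$, which \emph{is} negative once $N\geq 3$. For $B$ the discrepancy is worse: the tail $D\sqrt m(2\log\lambda+7)/\sqrt\lambda$ must land at scale $\sqrt{Dm}/\sqrt N$, which forces $\lambda\sim DN$ (the paper takes $\lambda=DN/e^{7/2}$), whereas your $\lambda=D/m$ produces a tail of order $m\sqrt D$, which already exceeds the claimed bound for $m=1$ and $N>1$. So the missing idea is the precise tuning of the cutoff: it must be shifted down by $e^{7/2}$ to cancel the $+7$ from the divisor-sum lemma and the $+1.5$ from the Abel-transform bound, and for $B$ it must additionally carry a factor of $N$.
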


Now, we finish the proof of Proposition \ref{nonnulliteweightedsumL}.
\begin{proof}
Let $\lambda \geq 1$ be a parameter. To bound $\Scal_A (Nc)$, we will choose the "Abel-transform bound" for $c < \lambda$ and the Weil-induced bound for $c > \lambda$. This gives us 
\begin{eqnarray*}
|A(m,\chi,N)| & = & \left| \sum_{c>0} \frac{\Scal_A (Nc)}{Nc} \right|\leq 6 \sqrt{Dm} \left( \sum_{c=1}^{\lfloor \lambda \rfloor} \frac{ \log(DNc) + 1.5}{Nc} \right)
+ 2 D \sqrt{N} \left( \sum_{c > \lceil \lambda \rceil} \frac{\tau (c)}{(Nc)^{3/2}}  \right) \\
\\
 & \leq & \frac{6 \sqrt{Dm}}{N} \left((\log(DN)+1.5)(1+ \log(\lambda)) + \frac{\log(\lambda)^2}{2} \right) + \frac{2D}{N} \left( \frac{2 \log( \lambda) + 7}{\sqrt{\lambda}} \right)
\end{eqnarray*}
Notice first that if we choose $\lambda<1$, this reduces to using only the Weil bound an therefore we get
\[
|A(m,\chi,N)| \leq \frac{2D}{N} \sum_{c=1}^{+ \infty} \frac{\tau (c)}{c^{3/2}} \leq \frac{14D}{N}
\]
We choose $\lambda = D/e^{7/2}$ and develop $\log(\lambda)$ in the expression above. We obtain the bound
\[
 |A(m,\chi,N)| \leq \frac{\sqrt{Dm}}{N} ( 9 \log(D)^2 + 6 \log(D) \log(N)).
\]
Actually, these are the dominant terms of the expression for chosen $\lambda$, and it is not hard to check that the remainder is negative.
If $D < e^{7/2}$, the bound also holds. Notice it improves the Weil bound only when $D$ is large enough ($D>1000$ is a good order of magnitude).
We obtain the Weil bound similarly for $B(m,\chi,N)$ and for $\lambda >D$, we have
\[
|B(m,\chi,N)| \leq \frac{6 \sqrt{Dm}}{\sqrt{N}} \left( (\log(D)+1.5)(1+ \log(\lambda)) + \frac{\log(\lambda)^2}{2} \right) + D \sqrt{m} \cdot \frac{2 \log(\lambda)+7}{\sqrt{\lambda}} + \frac{\tau(D) \sqrt{m}}{\sqrt{D}}.
\]
The last term in the sum comes from the fact we only have a Weil bound for $d=D$. 
Similarly, calculating the Weil-only bound and then choosing $\lambda= DN/e^{7/2}$, we obtain the desired result.
\end{proof}

Adding these bounds together, we have the following proposition.
\begin{prop}
Let $K$ be the imaginary quadratic field of discriminant $-D$ and Dirichlet character $\chi$.
For every prime number $p > 50 D^{1/4} \log(D)$ not dividing $D$, $(a_1,L_\chi)_{p^2} ^{+,new} \neq 0$.
\end{prop}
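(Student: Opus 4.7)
The plan is to plug the decomposition \eqref{formulea1chip2old} together with \eqref{formuleamlchinamchibmchi} into an explicit triangle-inequality estimate, showing that under the hypothesis $p>50 D^{1/4}\log(D)$ the main term stays close to $4\pi$ while every remainder is dwarfed by it. Concretely, applying \eqref{formuleamlchinamchibmchi} to each of the three quantities $(a_1,L_\chi)_{p^2}$, $(a_1,L_\chi)_p$, $(a_p,L_\chi)_p$ (with $\varepsilon=\chi(p^2)=1$ in the first and $\varepsilon=\chi(p)$ in the other two), I would write
\[
(a_1,L_\chi)_{p^2}^{+,\mathrm{new}} \;=\; 4\pi\,T_{\mathrm{princ}} \;-\; 8\pi^2\,T_{\mathrm{err}},
\]
where $T_{\mathrm{princ}}$ collects the exponential contributions $e^{-mx}$ and $T_{\mathrm{err}}$ collects all the $A(m,\chi,N)$ and $B(m,\chi,N)$ terms. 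The strategy is to bound $|T_{\mathrm{err}}|$ strictly below $|T_{\mathrm{princ}}|/(2\pi)$ so as to conclude that $(a_1,L_\chi)_{p^2}^{+,\mathrm{new}}\neq 0$.

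For the principal part, note that $T_{\mathrm{princ}}=e^{-2\pi/(Dp)}-\tfrac{p}{p^2-1}e^{-2\pi/(D\sqrt{p})}+\tfrac{1}{p^2-1}e^{-2\pi\sqrt{p}/D}$. Since $p>50 D^{1/4}\log D$ forces $p$ large (in particular $p\geq 73$ already when $D\geq 3$), the first exponential is within $2\pi/(Dp)$ of $1$ while the two parasitic terms are bounded in absolute value by $p/(p^2-1)+1/(p^2-1)\leq 2/p$. A short computation will give a clean lower bound such as $T_{\mathrm{princ}}\geq 1-3/p$, i.e.\ comfortably above $1/2$ for $p$ past the threshold.

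The bulk of the work is the upper bound on $T_{\mathrm{err}}$. For each of the three pairs $(m,N)\in\{(1,p^2),(1,p),(p,p)\}$ I would feed the relevant value into Proposition~\ref{bornesamchiNabel}, selecting in each case whichever of the two bounds (Weil-induced or Abel-transform) is sharper at the given range of $p/D$. For $(1,p^2)$ the Abel bound gives $|A|+|B|/p \ll \sqrt{D}\log^2(Dp)/p^2+\tau(D)/(\sqrt{D}\,p)$; multiplying by the prefactor $1$ (from $8\pi^2\sqrt{m}=8\pi^2$) keeps this $O(\sqrt{D}\log^2(Dp)/p^2)$. For $(1,p)$, the coefficient $p/(p^2-1)$ multiplies $8\pi^2(|A(1,\chi,p)|+|B(1,\chi,p)|/\sqrt{p})$; using Abel again collapses this to essentially the same order. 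For $(p,p)$, the factor $\sqrt{p}/(p^2-1)$ together with the min‐bounds on $A(p,\chi,p)$ and $B(p,\chi,p)$ (remembering that when $m=N=p$ the ``Abel'' bounds give $\sqrt{Dp}/p=\sqrt{D/p}$ times logarithmic terms) yields a contribution of order $\sqrt{D}\log^2(p)/p^2+\tau(D)/(p^{3/2}\sqrt{D})$. Summing, we get an all-encompassing bound of the form
\[
|T_{\mathrm{err}}|\;\leq\; \frac{C\,\sqrt{D}\,\log^2(Dp)}{p^2}\;+\;\frac{C'\,\tau(D)}{\sqrt{D}\,p}
\]
for absolute constants $C,C'$ that one reads off from the proof of Proposition~\ref{bornesamchiNabel}.

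The last step is to check that the condition $p>50D^{1/4}\log D$ makes $8\pi^2|T_{\mathrm{err}}|<4\pi T_{\mathrm{princ}}$, equivalently $|T_{\mathrm{err}}|<T_{\mathrm{princ}}/(2\pi)$. Since $p>50 D^{1/4}\log D$ gives $p^2>2500\sqrt{D}\log^2 D$, one has $\sqrt{D}\log^2(D)/p^2<4\cdot 10^{-4}/\log^2 D$; a careful bookkeeping of $\log p$ against $\log D$ (using $\log p\leq \log D$ in the borderline regime, and observing that otherwise $p$ is so large that the bound improves automatically) keeps the first summand of $|T_{\mathrm{err}}|$ well below $10^{-2}$, and the $\tau(D)/(\sqrt{D}\,p)$ piece is even smaller since $\tau(D)=O_\varepsilon(D^\varepsilon)$. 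Comparing with $T_{\mathrm{princ}}\geq 1-3/p$ yields the strict inequality, completing the proof. The main obstacle is exactly this optimization: making the constant $50$ work uniformly in $D$ requires carefully handling the two regimes $p$ close to the threshold (where the Abel bound is essential) and $p$ much larger than $D$ (where $\log p$ dominates and must be absorbed), and matching the numerical constants from Proposition~\ref{bornesamchiNabel}.
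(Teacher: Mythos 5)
Your overall strategy is the same as the paper's: plug \eqref{formuleamlchinamchibmchi} into \eqref{formulea1chip2old}, bound the principal exponential contributions from below (your $T_{\mathrm{princ}}\geq 1-3/p$ matches the paper's $\geq 19/20$), and dominate the six $A$- and $B$-terms using Proposition~\ref{bornesamchiNabel}. The issue is your concluding numerical step, which is not just ``careful bookkeeping'' but hides a real obstruction: your claim that the Abel-transform bounds suffice uniformly for $p>50D^{1/4}\log D$ is false for small $D$. At the threshold $p\approx 50D^{1/4}\log D$, the dominant error contribution $\sqrt{D}\log^2(p)/p^2$ equals $\log^2(50D^{1/4}\log D)/(2500\log^2 D)$; for $D=3$ this is about $0.006$, and after multiplying by the paper's coefficient $227$ and then by $2\pi$ the error exceeds $8$, which swamps the principal term near $1$. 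Your remark that ``$\log p\leq\log D$ in the borderline regime'' is also wrong for small $D$: at $D=3$, $\log p\approx 4.3$ while $\log D\approx 1.1$. The paper is explicit about this: the Abel-based estimate only closes the argument for $D\geq 15$, and the cases $D=7,8,11$ are salvaged by switching to Weil bounds for five of the six terms, while $D=3,4$ require appealing to numerical results of Bennett--Ellenberg--Ng. Your proposal needs this case split (or an equivalent workaround) to be complete; without it the constant $50$ in the statement cannot be justified.

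Two smaller points. First, the inequality $\sqrt{D}\log^2(D)/p^2<4\cdot 10^{-4}/\log^2 D$ that you write does not follow from $p^2>2500\sqrt{D}\log^2 D$; only $\sqrt{D}\log^2(D)/p^2<4\cdot 10^{-4}$ does, and you cannot absorb an extra $\log^2 D$ for free. Second, your error bound should carry $\log^2(Dp)$ (and the paper splits this as $\log^2 D$, $\log D\log p$, $\log^2 p$ with explicit coefficients), and it is precisely the $\log^2 p$ piece that blows up near the threshold for small $D$, as above.
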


\begin{proof}
Thanks to formulas \eqref{formulea1chip2old} and \eqref{formuleamlchinamchibmchi}, we have
 \[
\begin{array}{cc}
\frac{|(a_1,L_\chi)_{p ^2} ^{+,new}|}{4 \pi} \geq (e^{-2 \pi / (D p)}- \frac{p}{p^2-1} - \frac{1}{p^2-1}) & - 2 \pi \left(|A(1,\chi,p^2| + \frac{p|A(1, \chi,p)|}{p^2-1} + \frac{|A(p,\chi,p)|}{p^2-1} \right) \\
& - 2 \pi \left(\frac{|B(1,\chi,p^2|}{p} + \frac{\sqrt{p}|B(1, \chi,p)|}{p^2-1}  + \frac{|B(p,\chi,p)|}{p^2-1} \right)
\end{array}
 \]
The first term is larger than 19/20 because $D \geq 3$ and $p \geq 50 \cdot 3^{1/4} \log(3) \geq 72$. Now, putting together the Abel-transform bounds from Proposition \ref{bornesamchiNabel}, we obtain 
\[
 \frac{|(a_1,L_\chi)_{p ^2} ^{+,new}|}{4 \pi} \geq \frac{19}{20} - \frac{\sqrt{D}}{p^2} \left(294 \log^2(D) + 416 \log(D) \log(p) + 227 \log^2 (p) \right) - \frac{2 \pi \tau(D)}{\sqrt{D}} \left( \frac{1}{p} + \frac{1}{p-1} \right)
\]
and after computation, we find that for $D \geq 15$, $p \geq 50 D^{1/4} \log(D)$, $(a_1,L_\chi)_{p ^2} ^{+,new} \neq 0$.
For $D < 15$, we have to recover this results with some more computation : when $D=7,8$ or $11$, this is possible by using Weil bounds for all six terms except for $|B(1,\chi,p^2)|$. When $D=3$ or $4$, we can obtain it back from computations we can find in \cite{BennettEllenbergNg}. More precisely, we mention this article provides sharp estimates for small discriminants ($p \geq 61$ for $D=4$ and $p \geq 97$ for $D=8$ in Lemma 14), hence we suggest using it in these cases rather than our own bound.
\end{proof}

 An equivalent of Akbary's trace formula for $(a_m,a_n)_N ^{w_M}$ with $M$ any divisor of $N$ such that $(M,N/M)=1$ (Akbary deals with $M=N$) could allow us to give estimates of $(a_1,L)_{dp^2}^{w_p,p-new}$ with prime $p$ and $d>1$ by the same methods, and thus  prove that the jacobian of 
$X_0(d) \times X_{nonsplit} (p)$ (described in \cite{DeSmitEdixhoven}) has a rank zero quotient when $p$ is large enough . This would extend a result of \cite{DarmonMerel} (where they consider the case $g(X_0(d))=0$) to some more cases.

\bibliographystyle{amsalpha}
\bibliography{bibliotdn}

\providecommand{\bysame}{\leavevmode\hbox to3em{\hrulefill}\thinspace}
\providecommand{\MR}{\relax\ifhmode\unskip\space\fi MR }
\providecommand{\MRhref}[2]{%
  \href{http://www.ams.org/mathscinet-getitem?mr=#1}{#2}
}
\providecommand{\href}[2]{#2}
\begin{thebibliography}{MW93b}

\bibitem[Akb97]{Akbary}
Amir Akbary, \emph{Non-vanishing of modular {L}-functions with large level},
  Ph.D Thesis, University of Toronto, URL \url
  http://www.cs.uleth.ca/$\sim$akbary/publications.html.

\bibitem[BD97]{BertoliniDarmonEdixhoven}
Massimo Bertolini and Henri Darmon, \emph{A {R}igid {A}nalytic {G}ross-{Z}agier
  {F}ormula and {A}rithmetic {A}pplications}, Annals of Mathematics
  \textbf{146} (1997), no.~1, 111--147.

\bibitem[BEN10]{BennettEllenbergNg}
Michael Bennett, Jordan Ellenberg, and Nathan Ng, \emph{The diophantine
  equation ${A}^4 + 2^\delta {B}^2= {C}^n$}, Int. J. Number theory \textbf{6}
  (2010), 311--338.

\bibitem[BLR90]{BoschRaynaud}
Siegfried Bosch, Werner L{\"u}tkebohmert, and Michel Raynaud, \emph{N{\'e}ron
  models}, Ergebnisse der Mathematik und ihrer Grenzegebiete, vol.~21,
  Springer, 1990.

\bibitem[BP11a]{BiluParent09}
Yuri Bilu and Pierre Parent, \emph{Runge's method and modular curves}, Int.
  Math. Res. Not. (2011), no.~9, 1997--2027.

\bibitem[BP11b]{BiluParent11}
\bysame, \emph{Serre's {U}niformity {P}roblem in the {S}plit {C}artan case},
  Annals of Mathematics (2) \textbf{173} (2011), 569--584.

\bibitem[Bum96]{Bump}
Daniel Bump, \emph{Automorphic forms and representations}, Cambridge Studies in
  Mathematics, 1996.

\bibitem[Coj05]{Cojocaru05}
Alina~Carmen Cojocaru, \emph{On the {S}urjectivity of the {G}alois
  {R}epresentations {A}ssociated to {N}on-{C}{M} {E}lliptic {C}urves}, Canad.
  Math. Bull. \textbf{48} (2005), 16--31.

\bibitem[DF]{DieulefaitFreitas}
Luis Dieulefait and Nuno Freitas, \emph{The {F}ermat-type equations $x^5 + y^5
  = 2 z^p$ or $3 z^p$ solved through $\mathbb{{Q}}$-curves}, preprint,
  arXiv:1103.5388.

\bibitem[DM97]{DarmonMerel}
Henri Darmon and Loïc Merel, \emph{Winding quotients and some variants of
  {F}ermat's {L}ast {T}heorem}, J. Reine Angew. Math. \textbf{490} (1997),
  81--100.

\bibitem[dSE00]{DeSmitEdixhoven}
Bart de~Smit and Bas Edixhoven, \emph{Sur un résultat d'{I}min {C}hen}, Mat.
  Res. Lett. \textbf{7} (2000), 147--153.

\bibitem[Elk04]{Elkies04}
Noam~D. Elkies, \emph{On {E}lliptic ${K}$-curves}, Modular Curves and Abelian
  Varieties, Prog. Math., vol. 224, Birkhäuser Basel, 2004, pp.~81--91.

\bibitem[Ell04]{Ellenberg04}
Jordan~S. Ellenberg, \emph{Galois {R}epresentations {A}ttached to
  $\mathbb{Q}$-{C}urves and the {G}eneralized {F}ermat {E}quation ${A}^4+
  {B}^2={C}^p$}, American journal of mathematics (2004), 763--787.

\bibitem[Ell05]{Ellenberg05}
Jordan Ellenberg, \emph{On the error term in {D}uke's estimate for the average
  special values of ${L}$-functions}, Canad. Math. Bull. \textbf{48} (2005),
  533--546.

\bibitem[GR]{GaudronRemond}
\'Eric {Gaudron} and Gaël Rémond, \emph{Théorème des périodes et degrés
  minimaux d'isogénies}, preprint, arXiv:1105.1230.

\bibitem[Has97]{Hasegawa97}
Yuji Hasegawa, \emph{$\mathbb{Q}$-curves over quadratic fields}, Manuscripta
  mathematica \textbf{94} (1997), 347--364.

\bibitem[IK04]{IwaniecKowalski}
Henryk Iwaniec and Emmanuel Kowalski, \emph{Analytic number theory}, Colloquium
  Publications - American Mathematical Society, American Mathematical Society,
  2004.

\bibitem[KL81]{KubertLang}
Daniel~S. Kubert and Serge Lang, \emph{Modular units}, Grundlehren der
  mathematischen Wissenschaften, Springer-Verlag, 1981.

\bibitem[Lan02]{LangAlgebra}
Serge Lang, \emph{Algebra}, Graduate Texts in Mathematics Series, Springer,
  2002.

\bibitem[Maz77]{Mazur77}
Barry Mazur, \emph{Modular curves and the {E}isenstein ideal}, Publications
  math\'ematiques de l'IHES \textbf{47} (1977), 33--186.

\bibitem[Maz78]{Mazur78}
\bysame, \emph{Rational isogenies of prime degree (with an appendix by {D}.
  {G}oldfeld)}, Inventiones mathematicae \textbf{44} (1978), no.~2, 129--162.

\bibitem[Mom84]{Momose84}
F.~Momose, \emph{Rational points on the modular curves $ {X}_\mathrm{split}
  (p)$}, Compositio Mathematica \textbf{52} (1984), no.~1, 115--137.

\bibitem[MW93a]{MasserWustholz93bis}
David Masser and Gisbert Wüstholz, \emph{Galois properties of division fields
  of elliptic curves}, Bull. London Math. Soc. \textbf{25} (1993), 247--254.

\bibitem[MW93b]{MasserWustholz93}
\bysame, \emph{Isogeny {E}stimates for {A}belian {V}arieties and {F}initeness
  {T}heorems}, Annals of Mathematics, Second series \textbf{137} (1993), no.~3,
  459--472.

\bibitem[Pel01]{Pellarin01}
Federico Pellarin, \emph{Sur une majoration explicite pour un degré
  d'isogénie liant deux courbes elliptiques}, Acta Arithmetica \textbf{100}
  (2001), 203--243.

\bibitem[Pom11]{Pomerance11}
Carl Pomerance, \emph{Remarks on the {P}\'olya-{V}inogradov inequality},
  Integers \textbf{11} (2011), 531--542.

\bibitem[Ray74]{Raynaud74}
Michel Raynaud, \emph{Sch\'emas en groupes de type $(p, \dots,p)$}, Bull. soc.
  Math. Fr. \textbf{102} (1974), 241--280.

\bibitem[Rib04]{Ribet04}
Kenneth~A. Ribet, \emph{Abelian {V}arieties over {$\mathbb{Q}$} and {M}odular
  {F}orms}, Modular curves and abelian varieties, Progr. Math., vol. 224,
  Birkhäuser Basel, 2004, pp.~241--261.

\bibitem[Ser72]{Serre71}
Jean-Pierre Serre, \emph{Propri{\'e}t{\'e}s galoisiennes des points d'ordre
  fini des courbes elliptiques}, Inventiones mathematicae \textbf{15} (1972),
  no.~4, 259--331.

\bibitem[Sil09]{SilvermanAEC}
Joseph~H. Silverman, \emph{The {A}rithmetic of {E}lliptic {C}urves}, vol. 106,
  Springer Verlag, 2009.

\end{thebibliography}

\end{document}